\definecolor{mylinkcolor}{rgb}{0.05,0.05,0.4}
\newcommand{\bref}[1]{(\ref{#1})}
\newcommand{\ucontents}[2]{\addcontentsline{toc}{#1}{\numberline{}{#2}}}
\newcommand{\demph}[1]{\textbf{\textup{#1}}}
\newcommand{\cat}[1]{\mathcal{#1}}
\newcommand{\fcat}[1]{\mathbf{#1}}
\newcommand{\scat}[1]{\mathbf{#1}}
\newcommand{\such}{:}
\newcommand{\epsln}{\varepsilon}
\newcommand{\iso}{\cong}
\newcommand{\eqv}{\simeq}
\newcommand{\of}{\mathbin{\circ}}
\newcommand{\sub}{\subseteq}
\newcommand{\from}{\colon}
\newcommand{\dvd}{\mathrel{\mid}}
\newcommand{\N}{\mathbb{N}}
\newcommand{\Z}{\mathbb{Z}}
\newcommand{\R}{\mathbb{R}}
\newcommand{\C}{\mathbb{C}}
\DeclareMathOperator{\Cl}{Cl}
\DeclareMathOperator{\Hom}{Hom}
\DeclareMathOperator{\im}{im}
\newcommand{\op}{\mathrm{op}}
\newcommand{\pr}{\mathrm{pr}}
\newcommand{\copr}{\mathrm{copr}}
\newcommand{\Ab}{\fcat{Ab}}
\newcommand{\CptMet}{\fcat{CptMet}}
\newcommand{\FDVect}{\fcat{FDVect}}
\newcommand{\FinSet}{\fcat{FinSet}}
\newcommand{\Met}{\fcat{Met}}
\newcommand{\Set}{\fcat{Set}}
\newcommand{\mg}[1]{\lvert#1\rvert}
\newcommand{\toby}[1]{\xrightarrow{#1}}
\newcommand{\incl}{\hookrightarrow}
\newcommand{\monic}{\rightarrowtail}
\newcommand{\epic}{\twoheadrightarrow}
\newcommand{\oppair}[4]{%
\xymatrix@1{%
#1 \ar@<.5ex>[r]^{#3} &{#2}\ar@<.5ex>[l]^{#4}%
}}
\newcommand{\hadjnli}[4]{%
\xymatrix@1{
#1 \ar@<1ex>[r]^-{#3} \ar@{}[r]|-\bot &#2 \ar@<1ex>[l]^-{#4}}}
\newcommand{\hadjnri}[4]{%
\xymatrix@1{
#1 \ar@<1ex>[r]^-{#3} \ar@{}[r]|-\top &#2 \ar@<1ex>[l]^-{#4}}}
\newcommand{\lengths}{\setlength{\unitlength}{1mm}\setlength{\fboxsep}{0pt}}
\newcommand{\cell}[4]{\put(#1,#2){\makebox(0,0)[#3]{#4}}}
\def\today{\number\day\space \ifcase\month\or
  January\or February\or March\or April\or May\or June\or
  July\or August\or September\or October\or November\or December\fi
  \space\number\year}
\newcommand{\ei}[1]{\im^\infty(#1)}
\newcommand{\eisole}{\im^\infty}
\newcommand{\ek}[1]{\ker^\infty(#1)}
\newcommand{\au}[1]{\widetilde{#1}}
\DeclareMathOperator{\coim}{coim}
\newcommand{\edi}[1]{\coim^\infty(#1)}
\newcommand{\ento}[2]{#1^{\rotatebox{110}{$\circlearrowright$}\!#2}}
\renewcommand{\C}{\cat{C}}
\newcommand{\Endo}[1]{\mathbf{En}(#1)}
\newcommand{\Auto}[1]{\mathbf{Au}(#1)}
\DeclareMathOperator{\colim}{colim}
\newcommand{\axstyle}[1]{\textbf{#1}}
\newcommand{\axI}{\axstyle{I}}
\newcommand{\axII}{\axstyle{II}}
\newcommand{\axIII}{\axstyle{III}}
\newcommand{\axIs}{\axstyle{I\/${}^*$}}
\newcommand{\axIIs}{\axstyle{II\/${}^*$}}
\newcommand{\axIIIs}{\axstyle{III\/${}^*$}}
\newcommand{\mapname}[1]{\ulcorner #1 \urcorner}
\newcommand{\emb}{\monic}
\newcommand{\cov}{\epic}
\newcommand{\embby}[1]{\stackrel{#1}{\rightarrowtail}}
\newcommand{\covby}[1]{\stackrel{#1}{\twoheadrightarrow}}
\newcommand{\Emb}{\mathbf{Emb}}
\DeclareMathOperator{\can}{can}
\DeclareMathOperator{\spn}{span}
\newcommand{\metspn}[1]{\langle #1 \rangle}
\newcommand{\lefts}{\left(\!\!\!} 
\newcommand{\rights}{\!\!\!\right)}
\newtheorem{thm}{Theorem}[section]
\newtheorem{propn}[thm]{Proposition}
\newtheorem{lemma}[thm]{Lemma}
\newtheorem{cor}[thm]{Corollary}
\newtheorem{defn}[thm]{Definition}
\newtheorem{example}[thm]{Example}
\newtheorem{examples}[thm]{Examples}
\newtheorem{remark}[thm]{Remark}
\theoremstyle{nonumberplain}
\newtheorem{proof}{Proof}
\newcommand{\theoremtobeproved}{}
\newtheorem{pfoftheorem}{Proof of \theoremtobeproved}
\title{The eventual image}
\author{Tom Leinster%
\thanks{School of Mathematics, University of Edinburgh, Scotland;
  Tom.Leinster@ed.ac.uk}}
\date{\emph{In memory of Pieter Hofstra}}
\begin{document}

\sloppy
\maketitle

\begin{abstract}
In a category with enough limits and colimits, one can form the universal
automorphism on an endomorphism in two dual senses.  Sometimes these dual
constructions coincide, including in the categories of finite sets,
finite-dimensional vector spaces, and compact metric spaces. There,
beginning with an endomorphism $f$, there is a doubly-universal
automorphism on $f$ whose underlying object is the eventual image
$\bigcap_{n \geq 0} \im(f^n)$.  Our main theorem unifies these examples,
stating that in any category with a factorization system satisfying certain
axioms, the eventual image has two dual universal properties. A further
theorem characterizes the eventual image as a terminal coalgebra. In all,
nine characterizations of the eventual image are given, valid at different
levels of generality.
\end{abstract}

\tableofcontents\

\section{Introduction}
\label{sec:intro}

Any endomorphism in a suitably complete category $\C$ gives rise to an
automorphism in $\C$ in two dual universal ways. Indeed, let $\ento{X}{f}$
be an endomorphism in $\C$. There is an automorphism $\ento{L}{u}$ in
$\C$ together with a map $\ento{L}{u} \to \ento{X}{f}$ 
that is terminal among all maps from an
automorphism to $\ento{X}{f}$. Dually, there is another automorphism
$\ento{M}{v}$ with a map $\ento{X}{f} \to \ento{M}{v}$ that is initial as
such.

This much is a categorical triviality, following from the existence of
Kan extensions. Less trivial is the observation that in categories whose
objects are sufficiently finite in nature, these two dual universal
constructions coincide. For example, they coincide in the categories of
finite sets, of finite-dimensional vector spaces, and of compact metric
spaces.

Every endomorphism $\ento{X}{f}$ in such a category $\C$ therefore gives
rise to a single object equipped with an automorphism, with two dual
universal properties.  In the examples just mentioned, this object can be
constructed as $\bigcap_{n \in \N} \im(f^n)$, the \emph{eventual image} of
$f$. It can also be characterized as the space of points $x \in X$ that are
periodic in a category-sensitive sense: $x$ belongs to the set $\{f(x),
f^2(x), \ldots\}$ in the set case, or its span in the linear case, or its
closure in the metric case.

This work is intended as a small step towards a categorical treatment of
dynamical systems. An endomorphism $\ento{X}{f}$ can be seen as a discrete
time dynamical system in which $f$ is performed once with every tick of the
clock. The dynamical viewpoint is appropriate when $f$ is to be iterated
indefinitely. For example, Devaney's introductory dynamics text
(\cite{Deva}, p.~17) states:
\begin{quote}
The basic goal of the theory of dynamical systems is to understand the
eventual or asymptotic behaviour of an iterative process.
\end{quote}
Compared to the dynamical systems studied by practitioners of the subject,
the ones considered here are very primitive. But if there is to be any hope
of developing a helpful categorical approach to the theory of dynamical
systems in all its subtlety and complexity, we must first learn to handle
the most basic situations.

We begin with the definitions (Section~\ref{sec:defns}). Following the
principle that Kan extensions are best done pointwise, the eventual image
of $\ento{X}{f}$ is defined via (co)limits: if the diagram
\begin{align}
\label{eq:intro-seq}
\cdots \toby{f} X \toby{f} X \toby{f} X \toby{f} \cdots
\end{align}
has both a limit and a colimit, and the canonical map between them is an
isomorphism, we say that $f$ \emph{has eventual image duality} and define
the \emph{eventual image} $\ei{f}$ to be that (co)limit. We show that
$\ei{f}$ carries a canonical automorphism $\au{f}$ and that
$\ento{\ei{f}}{\au{f}}$ has the requisite universal properties. The $0$th
limit projection and colimit coprojection are maps
\[
\xymatrix@C+1em{
\ento{\ei{f}}{\au{f}} 
\ar@<0.5ex>[r]^-{\iota_f} &
\ento{X}{f}
\ar@<0.5ex>[l]^-{\pi_f}
}
\]
satisfying $\pi_f \iota_f = 1_{\ei{f}}$. Hence $\iota_f \pi_f$ is
an idempotent on $X$, called $f^\infty$, whose image is $\ei{f}$. Thus, the
original dynamical system $\ento{X}{f}$ gives rise not only to the
reversible system $\ento{\ei{f}}{\au{f}}$, but also to a system
$\ento{X}{f^\infty}$ that stabilizes in a single step.

With the definitions made, we prove some general results
(Section~\ref{sec:props}). For example, $\ei{f^n} = \ei{f}$ and
$(f^n)^\infty = f^\infty$ for every $n \geq 1$, meaning that the eventual
image construction is independent of timescale. We also relate eventual
images to shift equivalence, a standard relation in symbolic dynamics.

The main theorem (Section~\ref{sec:fact}) states that if $\C$ admits a
factorization system of `finite type' (defined there), then every
endomorphism in $\C$ has eventual image duality. The theorem not only
proves the existence of eventual images, but also provides two dual
explicit constructions. Indeed, $\ei{f}$ is constructed as $\bigcap_{n \in
\N} \im(f^n)$, or more precisely, as the limit of the diagram
\[
\cdots \emb \im(f^2) \emb \im(f) \emb X.
\]
It is also the colimit of the dual diagram. The main
theorem applies to all three categories mentioned: finite sets,
finite-dimensional vector spaces, and compact metric spaces with
distance-decreasing maps.

In the definition of factorization system of finite type, the main
condition is that an endomorphism belonging to either the left or the right
class must be invertible. This is a Dedekind finiteness condition, together
with its dual. Many categories of interest satisfy one condition or the
other~\cite{Uday}, but categories satisfying both are rarer.

In our three main examples, the eventual image of $\ento{X}{f}$ is the
largest subspace $A$ of $X$ satisfying $A \sub fA$. Generally, we prove
that in a category with a factorization system of finite type,
the eventual image is the terminal coalgebra for an endofunctor $A \mapsto
fA$ (Section~\ref{sec:coalg}).

Sections~\ref{sec:set}, \ref{sec:vect} and~\ref{sec:met} analyse the
eventual image in the categories of finite sets, finite-dimensional vector
spaces and compact metric spaces. In all three cases, we find explicit
descriptions of the idempotent $f^\infty$ on $X$. In the first, $f^\infty$
belongs to the set $\{1, f, f^2, \ldots\}$, in the second, it is in its
linear span, and in the third, it is in its closure. Finally,
Section~\ref{sec:other} gathers further examples of the eventual image. For
instance, in a Cauchy-complete category whose hom-sets are finite, every
endomorphism has eventual image duality.

In summary, we describe the eventual image of an endomorphism
$\ento{X}{f}$ in nine equivalent ways:
\begin{enumerate}
\item 
\label{item:char-univ}
as the universal automorphism equipped with a map into $\ento{X}{f}$;

\item
\label{item:char-lim}
as the limit of the diagram $\cdots \toby{f} X \toby{f} X \toby{f} \cdots$;

\item
\label{item:char-int}
as the limit of the diagram $\cdots \emb \im(f^2) \emb \im(f) \emb X$;

\item
\label{item:char-tc}
as the terminal coalgebra for the endofunctor $A \mapsto fA$ on subobjects
$A \emb X$;

\item
\label{item:char-per}
as the space of periodic points of $f$,
\end{enumerate}
together with the duals of
\bref{item:char-univ}--\bref{item:char-tc}. These descriptions are valid at
different levels of generality: \bref{item:char-univ}
and~\bref{item:char-lim} whenever $f$ has eventual image duality,
\bref{item:char-int} and~\bref{item:char-tc} when $\C$ has a factorization
system of finite type, and~\bref{item:char-per} for the three leading
examples, interpreting `periodic' appropriately.

\paragraph*{Related and further work} 

The eventual image appears in both symbolic dynamics and semigroup theory
(for instance, Definition~7.4.2 of~\cite{LiMa} and p.~79
of~\cite{SteiRTF}). There, it is more often called the eventual range,
although `eventual image' has been used (e.g.\ \cite{Kitc}, p.~53). The
idempotent $f^\infty$ is sometimes written as $f^\omega$, evoking the
countable ordinal $\omega$. While the eventual image $\ei{f} =
\im(f^\infty)$ is indeed the countable intersection $\bigcap_{n \in \N}
\im(f^n)$ in the cases studied here, that is only because of their finite
character. There are other settings in which the restriction of $f$ to
$\bigcap_{n \in \N} \im(f^n)$ need not be surjective, and one has to
iterate further through the ordinals to obtain an automorphism. Notation
aside, semigroup theory provides a lens through which to view this work
(Steinberg~\cite{EI,EI2}).

In topological dynamics, limits of diagrams of the
form~\eqref{eq:intro-seq} are known as generalized solenoids
(\cite{WillCOD}, p.~341).

Limit-colimit coincidences are closely related to absolute (co)limits:
consider direct sums in $\Ab$-categories and idempotent splittings, for
instance, or see Section~6 of Kelly and Schmitt~\cite{KeSc}. But some
simultaneous limits and colimits are not absolute. The eventual image is
one case (Example~\ref{eg:not-abs}); another can be found in the
representation theory of finite groups, where induced and coinduced
representations coincide.

Sections~\ref{sec:set}--\ref{sec:met} reveal many commonalities between the
three principal examples (sets, vector spaces and metric spaces). Some of
those commonalities are accounted for by the results of
Sections~\ref{sec:fact} and~\ref{sec:coalg} on factorization
systems. Others, such as the description of $\ei{f}$ in terms of periodic
points, are not. These merit further investigation.

One can also seek to generalize the examples given. The vector space case
can perhaps be extended to more general categories of modules. In the
metric case, our maps are distance-decreasing, but the crucial feature of
distance-decreasing endomorphisms $f$ is that $\{f^n \such n \in \N\}$ is
equicontinuous---a condition that refers only to the uniform structure, not
the metric (as noted by Steinberg~\cite{EI2}). 

Many of the results on sets, vector spaces and metric spaces in
Sections~\ref{sec:set}--\ref{sec:met} are elementary; they are not claimed to
be original. They are assembled in this way in order to reveal the 
common patterns and show their place in the theory of the eventual image.

Our focus is on the very special categories in which the left and right
universal methods for converting an endomorphism into an automorphism
coincide. We leave open many questions about more general categories. For
example, there is a sense in which the eventual image of the continuous map
$z \mapsto z^2$ on the Riemann sphere $\mathbb{C} \cup \{\infty\}$ ought to
be $\{z \in \mathbb{C} \such \mg{z} = 1\} \cup \{0, \infty\}$, even though
this is not what is given by any of the nine characterizations listed
above. Developing a general theory of the eventual image that covers such
examples remains a challenge.

\paragraph*{}

This paper is dedicated to the memory of Pieter
Hofstra, whose death is such a terrible loss.

\section{Definitions}
\label{sec:defns}

Let $\C$ be a category and let $f \from X \to X$ be an endomorphism in
$\C$, which we write as $\ento{X}{f}$. Suppose that the diagram
\begin{align}
\label{eq:dbl-seq}
\xymatrix{
\cdots \ar[r]^f &
X \ar[r]^f      &
X \ar[r]^f      &
X \ar[r]^f      &
\cdots
}
\end{align}
has both a limit cone $\bigl(L \toby{\pr_n} X\bigr)_{n \in \Z}$ and a
colimit cone $\bigl(X \toby{\copr_n} M\bigr)_{n \in \Z}$:
\[
\xymatrix@C+2em{
&
&
L \ar[ld]_{{\displaystyle\cdots} \qquad\qquad}|*+<2pt>{\scriptstyle\pr_{-1}} 
\ar[d]|*+<3pt>{\scriptstyle\pr_0} 
\ar[rd]|*+<3pt>{\scriptstyle\pr_1}^{\qquad\qquad{\displaystyle\cdots}}       &
&
\\
\cdots \ar[r]^f &
X \ar[r]^f \ar[rd]_{{\displaystyle\cdots}\qquad\qquad}|*+<3pt>{\scriptstyle\copr_{-1}}        &
X \ar[r]^f \ar[d]|*+<3pt>{\scriptstyle\copr_0}     &
X \ar[r]^f \ar[ld]|*+<3pt>{\scriptstyle\copr_1}^{\qquad\qquad{\displaystyle\cdots}}   &
\cdots          \\
&
&
M
&
&
}
\]
Then there is a canonical map $L \to M$, defined as the composite
\[
L \toby{\pr_n} X \toby{\copr_n} M,
\]
which is independent of $n \in \Z$. 

\begin{defn}
\label{defn:ei}
An endomorphism $\ento{X}{f}$ in a category \demph{has eventual image
duality} if the diagram~\eqref{eq:dbl-seq} has both a limit $L$ and a
colimit $M$, and the canonical map $L \to M$ is an isomorphism. In that
case, the object $L \iso M$, together with the limit projections and
colimit coprojections, is an \demph{eventual image} of $f$.
\end{defn}

\begin{example}
\label{eg:evim-set}
In $\Set$, most endomorphisms $\ento{X}{f}$ do not have eventual image
duality. 

The limit $L$ of~\eqref{eq:dbl-seq} is the set of all double sequences
$(x_n)_{n \in \Z}$ such that $x_n \in X$ and $f(x_n) = x_{n + 1}$ for all
$n \in \Z$. 

The colimit $M$ is the set of equivalence classes of pairs $(n, x)$ with $n
\in \Z$ and $x \in X$, where $(n, x) \sim (m, y)$ if there is some $p \geq
m, n$ such that $f^{p - n}(x) = f^{p - m}(y)$. Alternatively, it is the set
of equivalence classes of tails $(x_n)_{n \geq N}$ with $N \in \Z$ and
$f(x_n) = x_{n + 1}$ for all $n \geq N$, where two tails $(x_n)_{n \geq N}$
and $(y_n)_{n \geq P}$ are equivalent if $x_n = y_n$ for all sufficiently
large $n$.

The canonical map $L \to M$ sends $(x_n)_{n \in \Z}$ to the equivalence
class of $(0, x_0)$ in the first description of the colimit, or the
equivalence class of $(x_n)_{n \geq 0}$ in the second. It is typically
not bijective. For example, it is not surjective when $f$ is the squaring
map on the real interval $[2, \infty)$, and not injective when $f$ is the
squaring map on $\mathbb{C}$.
\end{example}

When $f$ has eventual image duality, we identify the limit and colimit
objects via the canonical isomorphism, writing $\ei{f}$ for both. Thus, we
have limit and colimit cones
\[
\bigl( \ei{f} \toby{\pr_n} X \bigr)_{n \in \Z},
\qquad
\bigl( X \toby{\copr_n} \ei{f} \bigr)_{n \in \Z}.
\]
Write
\[
\iota_f = \pr_0,
\qquad
\pi_f = \copr_0.
\]
Then the composite
\[
\ei{f} \toby{\iota_f} X \toby{\pi_f} \ei{f}
\]
is the identity, so there is an idempotent $\ento{X}{f^\infty}$ defined
by 
\[
f^\infty = \bigl( X \toby{\pi_f} \ei{f} \toby{\iota_f} X \bigr).
\]

The map of diagrams 
\begin{align*}
\begin{array}{c}
\xymatrix{
\cdots  \ar[r]^f        &
X \ar[r]^f \ar[d]^f     &
X \ar[r]^f \ar[d]^f     &
X \ar[r]^f \ar[d]^f     &
\cdots                  \\
\cdots \ar[r]_f         &
X \ar[r]_f              &
X \ar[r]_f              &
X \ar[r]_f              &
\cdots
}
\end{array}
\end{align*}
induces an endomorphism $\ento{\ei{f}}{\au{f}}$. In principle it induces
two such endomorphisms, depending on whether $\ei{f}$ is viewed as a limit
or a colimit, but they are equal.

\begin{lemma} 
Let $\ento{X}{f}$ be an endomorphism with eventual image duality, in any
category. Then $\ento{\ei{f}}{\au{f}}$ is invertible. 
\end{lemma}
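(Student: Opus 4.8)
The plan is to work with the description of $\ei{f}$ as the limit $L$ of the diagram~\eqref{eq:dbl-seq} and to write down an explicit two-sided inverse for $\au{f}$ using only the universal property of $L$. First I would pin down $\au{f}$ concretely. The map of diagrams defining it has component $f$ at every vertex, so the induced endomorphism of the limit is characterized by $\pr_n \of \au{f} = f \of \pr_n$ for all $n \in \Z$; and since $(\pr_n)_{n \in \Z}$ is a cone over~\eqref{eq:dbl-seq} we have $f \of \pr_n = \pr_{n+1}$. Thus $\au{f} \from L \to L$ is the unique map satisfying $\pr_n \of \au{f} = \pr_{n+1}$ for all $n \in \Z$; informally, it is the ``shift'' of the limit cone.

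The next step is to observe that shifting the other way also produces a cone. Indeed, the family $(\pr_{n-1})_{n \in \Z}$ is again a cone over~\eqref{eq:dbl-seq}, because the cone condition for this family at index $n$ reads $f \of \pr_{n-1} = \pr_n$, which is just the original cone condition at index $n-1$. So the universal property of $L$ supplies a unique map $g \from L \to L$ with $\pr_n \of g = \pr_{n-1}$ for all $n \in \Z$. It then remains to check that $g$ inverts $\au{f}$ on both sides, which follows from the uniqueness clause of the universal property: for every $n$ one has $\pr_n \of (\au{f} \of g) = \pr_{n+1} \of g = \pr_n$ and $\pr_n \of (g \of \au{f}) = \pr_{n-1} \of \au{f} = \pr_n$, so $\au{f} \of g = \id_L$ and $g \of \au{f} = \id_L$. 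Hence $\au{f}$ is invertible.

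There is no genuinely hard step here; the argument is a routine manipulation of cones. The only points that need a little care are the index bookkeeping---in particular, verifying that $(\pr_{n-1})_{n \in \Z}$ (and likewise $(\pr_{n+1})_{n \in \Z}$) really is a cone, which is exactly where it matters that~\eqref{eq:dbl-seq} is indexed by $\Z$ rather than by $\N$---and checking at the outset that the ``shift'' description of $\au{f}$ matches the definition via the displayed map of diagrams. One could equally well run the dual argument on the colimit presentation $M$, obtaining the same conclusion; by eventual image duality the two induced endomorphisms coincide, but either presentation suffices on its own. In fact the hypothesis of eventual image duality is used here only to ensure that $\ei{f}$, and hence $\au{f}$, is defined: invertibility of $\au{f}$ needs nothing beyond the existence of the limit (or of the colimit).
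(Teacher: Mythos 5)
Your proof is correct and is essentially the paper's own argument: the paper likewise observes that the shifted family $(\pr_{n-1})_{n\in\Z}$ is a cone on the diagram, uses the limit property of $\ei{f}$ to induce an endomorphism, and notes that it is inverse to $\au{f}$ (you simply spell out the uniqueness checks the paper leaves to the reader). Your closing remark that only the limit presentation (or, dually, only the colimit) is needed is also consistent with how the paper argues.
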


\begin{proof}
There is a cone $\bigl( \ei{f} \toby{\pr^-_n} X \bigr)_{n \in \Z}$
on~\eqref{eq:dbl-seq} in which $\pr^-_n = \pr_{n - 1}$. Since $\bigl(
\ei{f} \toby{\pr_n} X \bigr)_{n \in \Z}$ is a limit cone, the cone
$(\pr^-_n)$ induces an endomorphism $\ento{\ei{f}}{}$, which one can
check is inverse to $\au{f}$.
\end{proof}

\begin{example}
\label{eg:ei-auto}
Every automorphism $\ento{X}{f}$ has eventual image duality. The eventual
image of $f$ is $X$ itself, with $\iota_f = \pi_f = f^\infty = 1_X$ and
$\au{f} = f$.
\end{example}

\begin{example}
\label{eg:ei-idem}
Every split idempotent has eventual image duality. Indeed, 
let $e\from X \to X$ be an idempotent in $\C$ splitting as 
\[
\xymatrix{
X \ar@<.3em>[d]^p    \\
I. \ar@<.3em>[u]^i
}
\]
Then $I$ is both the limit and colimit of $\cdots \toby{e} X \toby{e}
\cdots$, with projections $i$ and coprojections $p$. So $\ei{e} = I$ with
$\iota_e = i$, $\pi_e = p$, $e^\infty = e$ and $\au{e} = 1_I$.
\end{example}

Write $\Endo{\C}$ for the category whose objects $\ento{X}{f}$ are the
endomorphisms in $\C$ and whose maps $\ento{X}{f} \to \ento{Y}{g}$ are the
maps $u \from X \to Y$ in $\C$ satisfying $uf = gu$. It has a full
subcategory $\Auto{\C}$ consisting of the objects $\ento{X}{f}$ where $f$
is an automorphism.

Let $\ento{X}{f} \in \Endo{\C}$. If $f$ has eventual image duality then we
obtain an object $\ento{\ei{f}}{\au{f}} \in \Auto{\C}$ together with maps
\begin{align}
\label{eq:data}
\begin{array}{c}
\xymatrix{
\ento{X}{f} \ar@<.5em>[d]^{\pi_f}    \\
\ento{\ei{f}}{\au{f}} \ar@<.5em>[u]^{\iota_f}
}
\end{array}
\end{align}
such that $\pi_f \iota_f = 1$. Given only the idempotent $f^\infty$ in
$\C$, we can reconstruct $\ei{f}$, $\iota_f$ and $\pi_f$ as the splitting
data of $f^\infty$, and $\au{f}$ as $\pi_f \of f \of \iota_f$. In turn, the
data~\eqref{eq:data} determines the limit and colimit cones of
Definition~\ref{defn:ei} as follows.

\begin{lemma}
\label{lemma:pr}
Let $\C$ be a category. Let $\ento{X}{f}$ be an endomorphism in $\C$ with
eventual image duality. With notation as above, the limit and colimit cones
\[
\bigl( \ei{f} \toby{\pr_n} X \bigr)_{n \in \Z}, 
\qquad
\bigl( X \toby{\copr_n} \ei{f} \bigr)_{n \in \Z}
\]
are given by
\begin{align*}
\pr_n   &
= \Bigl( \ei{f} \toby{\au{f}^{\,n}} \ei{f} \toby{\iota_f} X \Bigr), \\
\copr_n &
= \Bigl( X \toby{\pi_f} \ei{f} \toby{\au{f}^{\,n}} \ei{f} \Bigr).
\end{align*}
\end{lemma}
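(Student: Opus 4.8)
The plan is to verify the two formulae by appealing to the universal properties of the limit and colimit cones, exploiting the duality so that only one of the two computations really needs to be done by hand. First I would establish the formula for $\copr_n$. Recall that $\copr_n$ is, by construction, the $n$th coprojection into the colimit $M \iso \ei{f}$; concretely, under the identification of $\ei{f}$ with the colimit, the coprojections satisfy the compatibility relation $\copr_{n+1} \of f = \copr_n$ coming from the colimit cocone. Since $\copr_0 = \pi_f$ by definition, a downward induction on $n$ gives $\copr_n = \copr_0 \of (\text{something})$ only after we know how to "move backwards", so instead I would argue upward: from $\copr_{n} = \copr_{n+1}\of f$ together with the fact (Lemma on invertibility of $\au{f}$, plus the defining relation $\copr_{n+1} = \au{f}\of \copr_n$ which itself comes from the map of diagrams inducing $\au{f}$) one gets $\copr_{n} = \au{f}^{\,n}\of \copr_0 = \au{f}^{\,n}\of\pi_f$. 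The key relation to pin down carefully is $\copr_{n+1} = \au{f}\of\copr_n$: this is exactly the statement that the induced endomorphism $\au{f}$ on the colimit, read off from the vertical map of diagrams displayed before the lemma, shifts the cocone by one step. Once that relation is in hand, the formula for $\copr_n$ follows for all $n \in \Z$ (using invertibility of $\au{f}$ for negative $n$).

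Next I would obtain the formula for $\pr_n$ dually. The cone $\bigl(\ei{f}\toby{\pr_n} X\bigr)$ satisfies $f\of\pr_n = \pr_{n+1}$, and the map of diagrams induces $\au{f}$ on the limit with $\pr_n\of\au{f} = \pr_{n+1}$, i.e.\ $\pr_{n+1} = \pr_n\of\au{f}$. Since $\pr_0 = \iota_f$, induction gives $\pr_n = \pr_0\of\au{f}^{\,n} = \iota_f\of\au{f}^{\,n}$ for $n \geq 0$, and invertibility of $\au{f}$ extends this to all $n \in \Z$. Alternatively, and perhaps more cleanly, I would note that everything in sight is self-dual: the limit cone for $\ento{X}{f}$ in $\C$ is the colimit cocone for $\ento{X}{f}$ in $\C^\op$, and under this dualization $\iota_f \leftrightarrow \pi_f$ while $\au{f}$ is replaced by $\au{f}$ in $\C^\op$ (whose composite $\au{f}^{\,n}$ reverses order), so the $\pr_n$ formula is literally the $\copr_n$ formula read in $\C^\op$. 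Stating it this way lets me cut the proof essentially in half.

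As a final consistency check I would confirm that the claimed families genuinely are cones/cocones and that they satisfy the normalization $\pr_0 = \iota_f$, $\copr_0 = \pi_f$: for instance $f\of\pr_n = f\of\iota_f\of\au{f}^{\,n}$ must equal $\pr_{n+1} = \iota_f\of\au{f}^{\,n+1}$, which reduces to $f\of\iota_f = \iota_f\of\au{f}$ — and this is precisely one of the two squares expressing that the vertical map of diagrams induces $\au{f}$ on the limit. The analogous dual identity $\pi_f\of f = \au{f}\of\pi_f$ handles the $\copr_n$ side. I expect the only real subtlety to be bookkeeping: being careful about the direction in which $\au{f}$ acts (on the left for projections out of $\ei{f}$, on the right for coprojections into $\ei{f}$) and about the sign of $n$, where one must invoke the previous lemma that $\au{f}$ is invertible in order to make sense of $\au{f}^{\,n}$ for $n < 0$. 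There is no hard mathematical content here; the lemma is essentially unwinding definitions, and the main obstacle is simply presenting the unwinding without notational confusion.
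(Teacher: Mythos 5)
Your treatment of the projection half is fine and is essentially the paper's own argument: the paper proves only the $\pr_n$ formula directly, using the cone identity $\pr_{n+1} = f \of \pr_n$ together with the defining relation $f \of \iota_f = \iota_f \of \au{f}$ (equivalently $\pr_n \of \au{f} = f \of \pr_n$), and then appeals to duality for the coprojections.

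The coprojection half of your proposal, however, rests on a false relation. With the conventions of Section~\ref{sec:defns} ($f$ goes from spot $n$ to spot $n+1$, so the cocone identity is $\copr_{n+1} \of f = \copr_n$, and the endomorphism induced on the colimit by the vertical-$f$ map of diagrams satisfies $\au{f} \of \copr_n = \copr_n \of f$), combining the two identities gives $\au{f} \of \copr_{n+1} = \copr_{n+1} \of f = \copr_n$, i.e.\ $\copr_{n+1} = \au{f}^{-1} \of \copr_n$ --- not your ``key relation'' $\copr_{n+1} = \au{f} \of \copr_n$. Concretely, for an automorphism $\ento{X}{f}$ one has $\ei{f} = X$, $\pi_f = 1_X$, $\au{f} = f$, and the colimit coprojections are $\copr_n = f^{-n}$; the family $\bigl(\au{f}^{\,n} \of \pi_f\bigr)_n$ is not even a cocone unless $\au{f}^{\,2} = 1$. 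Your own proposed consistency check would have exposed this: using $\pi_f \of f = \au{f} \of \pi_f$, the required identity $\copr_{n+1} \of f = \copr_n$ for the family $\au{f}^{\,n} \of \pi_f$ becomes $\au{f}^{\,n+2} \of \pi_f = \au{f}^{\,n} \of \pi_f$, which fails. The same sign issue is exactly the ``bookkeeping'' you wave away in the duality argument: passing to $\C^\op$ reverses the indexing of the $\Z$-shaped diagram, so the $n$th limit projection for $f^{\op}$ corresponds to the $(-n)$th colimit coprojection for $f$, and the dual of $\pr_n = \iota_f \of \au{f}^{\,n}$ is $\copr_n = \au{f}^{\,-n} \of \pi_f$. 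That is the form forced by the paper's conventions (it is what makes the canonical map $\copr_n \of \pr_n$ independent of $n$, and it is how the coprojections are written in the proof of the final theorem, $\copr_n = \hat{f}^{\,-n} p$), so the exponent in the displayed coprojection formula must be read as $-n$; your derivation, by contrast, manufactures the $+n$ exponent from an incorrect recursion. To repair it, replace the key relation by $\copr_{n+1} = \au{f}^{-1} \of \copr_n$ and carry the sign through, or do as the paper does: prove the projection formula only and dualize, taking care that dualization negates the index.
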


\begin{proof}
By duality, it suffices to prove the first statement. Recall that
$\iota_f = \pr_0$. When $n \geq 0$,
the diagram
\[
\xymatrix{
\ei{f} \ar[r]^-{\iota_f} \ar[d]_{\au{f}^{\,n}} 
\ar@/^4pc/[rd]^{\pr_n}                         &
X \ar[d]_{f^n}                                  \\
\ei{f} \ar[r]_-{\iota_f} &
X
}
\]
commutes, the square by definition of $\au{f}$ and the triangle by the cone
property of $(\pr_n)$. Similarly, for $n \geq 0$, the diagram
\[
\xymatrix{
\ei{f} \ar[r]^-{\pr_{-n}} \ar[d]_{\au{f}^{n}} 
\ar@/^4pc/[rd]^{\pr_0 = \iota_f}                &
X \ar[d]_{f^{n}}                               \\
\ei{f} \ar[r]_-{\pr_{-n}}  &
X
}
\]
commutes, giving $\pr_{-n} = \iota_f \of \au{f}^{-n}$. 
\end{proof}

\begin{propn}
\label{propn:lim-univ}
Let $\C$ be a category and let $f\from X \to X$ be an endomorphism in $\C$
with eventual image duality. Then:
\begin{enumerate}
\item 
\label{part:lu-lim}
$\iota_f \from \ento{\ei{f}}{\au{f}} \to \ento{X}{f}$ is terminal among maps to
$\ento{X}{f}$ from objects of $\Auto{\C}$; 

\item
\label{part:lu-colim}
$\pi_f \from \ento{X}{f} \to \ento{\ei{f}}{\au{f}}$ is initial among maps
from $\ento{X}{f}$ to objects of $\Auto{\C}$.
\end{enumerate}
\end{propn}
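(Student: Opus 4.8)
By duality it suffices to prove part~\bref{part:lu-lim}, since part~\bref{part:lu-colim} is obtained by reversing all arrows and interchanging limits with colimits. The plan is to exhibit $\iota_f$ as a map in $\Endo{\C}$ from an object of $\Auto{\C}$ to $\ento{X}{f}$, and then to show it has the required terminal property directly from the limit universal property of $\ei{f}$.

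First I would check that $\iota_f$ really is a map $\ento{\ei{f}}{\au{f}} \to \ento{X}{f}$ in $\Endo{\C}$, i.e.\ that $\iota_f \of \au{f} = f \of \iota_f$. This is the commuting square in the first diagram of Lemma~\ref{lemma:pr} (the case $n = 1$), which holds by definition of $\au{f}$; and $\ento{\ei{f}}{\au{f}}$ lies in $\Auto{\C}$ by the earlier lemma that $\au{f}$ is invertible. Next, given any $\ento{A}{a} \in \Auto{\C}$ and any map $g \from \ento{A}{a} \to \ento{X}{f}$ in $\Endo{\C}$, I must produce a unique $\bar{g} \from \ento{A}{a} \to \ento{\ei{f}}{\au{f}}$ with $\iota_f \of \bar g = g$. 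The key step is to use the relation $g \of a^n = f^n \of g$ (for all $n \in \Z$, which makes sense because $a$ is invertible) to see that the family $\bigl( A \toby{g \of a^{-n}} X \bigr)_{n \in \Z}$ is a cone on the diagram~\eqref{eq:dbl-seq}: the cone condition $f \of (g \of a^{-n}) = g \of a^{-(n-1)}$ is exactly $f \of g = g \of a$. Since $\ei{f}$ is the limit, this cone factors uniquely through a map $\bar g \from A \to \ei{f}$ with $\pr_n \of \bar g = g \of a^{-n}$ for all $n$.

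It then remains to verify that $\bar g$ has the two required properties. That $\iota_f \of \bar g = g$ follows by taking $n = 0$ and recalling $\iota_f = \pr_0$. That $\bar g$ is a map in $\Endo{\C}$, i.e.\ $\bar g \of a = \au{f} \of \bar g$, I would check by composing both sides with each limit projection $\pr_n$ and using $\pr_n \of \au{f} = \pr_{n-1}$ (which is immediate from Lemma~\ref{lemma:pr}, since $\pr_n \of \au{f} = \iota_f \of \au{f}^{\,n} \of \au{f} = \iota_f \of \au{f}^{\,n+1}$ — wait, that gives $\pr_{n+1}$; more carefully, the defining cone relation gives $\pr_n \of \au{f} = \pr_{n-1}$ directly from how $\au{f}$ arises as the induced endomorphism on the limit): then $\pr_n \of \bar g \of a = \pr_{n-1} \of \bar g \of a = g \of a^{-(n-1)} \of a = g \of a^{-n+1+1}$... the bookkeeping on signs needs care, but boils down to $\pr_n(\bar g a) = g a^{-(n-1)} \cdot a^{\mp 1}$ matching $\pr_n(\au f \bar g) = \pr_{n-1} \bar g = g a^{-(n-1)}$, so the two agree and the limit property forces $\bar g \of a = \au{f} \of \bar g$. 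Uniqueness of $\bar g$ among $\Endo{\C}$-maps with $\iota_f \of \bar g = g$ is the last point: any such $\bar g'$ satisfies $\pr_n \of \bar g' = \iota_f \of \au{f}^{\,n} \of \bar g' = \iota_f \of \bar g' \of a^{?}$ — using the $\Endo{\C}$-condition to slide $\au{f}^{\,n}$ past $\bar g'$ — $= g \of a^{-n}$, so $\bar g'$ induces the same factorization through the limit, hence $\bar g' = \bar g$.

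The main obstacle I anticipate is purely the index bookkeeping: making sure the exponents of $a$ and the shifts $\pr_n \leftrightarrow \pr_{n-1}$ are consistent throughout, and in particular being careful that everything extends correctly to negative $n$ (which is exactly where invertibility of $a$ is used, and is the reason the statement is about $\Auto{\C}$ rather than $\Endo{\C}$). There is no deep difficulty once the cone $\bigl( g \of a^{-n} \bigr)_n$ is written down; the content is entirely the observation that an automorphism mapping into $f$ automatically lifts through the limit cone, and this lift is forced to respect the dynamics.
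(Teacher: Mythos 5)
Your proposal is correct in substance, and it takes the direct route that the paper only gestures at: the paper's written proof instead identifies $\Endo{\C} \eqv \C^{B\N}$ and $\Auto{\C} \eqv \C^{B\Z}$ and invokes the pointwise limit formula for the right Kan extension along $B\N \incl B\Z$, so that terminality of $\iota_f$ becomes an instance of the universal property of Kan extensions. Your argument unpacks what that formula amounts to, using only the limit universal property of $\ei{f}$ and the invertibility of $a$ to build a doubly-infinite cone; this is more self-contained and elementary, while the paper's route is shorter given the background and feeds directly into the ambidextrous adjunction recorded afterwards. The only thing to tidy is the index bookkeeping you yourself flag: with the paper's conventions the diagram has $f$ from position $n$ to $n+1$, a cone satisfies $f \of s_n = s_{n+1}$, and $\pr_n = \iota_f \of \au{f}^{\,n}$, so $\pr_n \of \au{f} = \pr_{n+1}$ (not $\pr_{n-1}$), and the cone you want is $s_n = g \of a^{\,n}$ rather than $g \of a^{-n}$. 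Then $\pr_n \of \bar{g} = g \of a^{\,n}$, the case $n = 0$ gives $\iota_f \of \bar{g} = g$, equivariance follows because $\pr_n \of \au{f} \of \bar{g} = \pr_{n+1} \of \bar{g} = g \of a^{\,n+1} = \pr_n \of \bar{g} \of a$ for all $n$, and uniqueness follows because any equivariant $\bar{g}'$ with $\iota_f \of \bar{g}' = g$ satisfies $\pr_n \of \bar{g}' = \iota_f \of \au{f}^{\,n} \of \bar{g}' = \iota_f \of \bar{g}' \of a^{\,n} = g \of a^{\,n}$, forcing $\bar{g}' = \bar{g}$ by joint monicity of the limit projections. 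With the signs made consistent (or the diagram reindexed to match your $a^{-n}$), your argument is complete, and invertibility of $a$ is used exactly where you say: to extend the cone to all $n \in \Z$ and to slide $\au{f}^{\,n}$ past $\bar{g}'$ for negative $n$.
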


\begin{proof}
By duality, it is enough to prove~\bref{part:lu-lim}. One can check
terminality directly. Alternatively, denote by $B\N$ the one-object
category corresponding to the additive monoid $\N$, and similarly
$B\Z$. Then
\[
\Endo{\C} \eqv \C^{B\N},
\qquad
\Auto{\C} \eqv \C^{B\Z},
\]
and the inclusion $\Auto{\C} \incl \Endo{\C}$ is induced by the inclusion
$B\N \incl B\Z$. The standard end or limit formula for Kan
extensions reduces, in this case, to the statement that the right Kan
extension of $\ento{X}{f} \from B\N \to \C$ along $B\N \incl B\Z$ is
$\ento{\ei{f}}{\au{f}}$, with canonical map $\iota_f$.
\end{proof}

The universal property of the eventual image construction makes it
functorial. Explicitly, let $u \from \ento{X}{f} \to \ento{Y}{g}$ be a map
in $\Endo{\C}$. Assuming that both $\ento{X}{f}$ and $\ento{Y}{g}$ have
eventual image duality, the map of diagrams
\begin{align*}
\xymatrix{
\cdots  \ar[r]^f        &
X \ar[r]^f \ar[d]^u     &
X \ar[r]^f \ar[d]^u     &
X \ar[r]^f \ar[d]^u     &
\cdots                  \\
\cdots \ar[r]_g         &
Y \ar[r]_g              &
Y \ar[r]_g              &
Y \ar[r]_g              &
\cdots
}
\end{align*}
induces a map $u_*\from \ei{f} \to \ei{g}$ on the limits or, equivalently,
the colimits. This construction is functorial where defined: $(1_X)_* =
1_{\ei{f}}$ and $(vu)_* = v_* u_*$.

\begin{example}
\label{eg:ind-self}
Let $\ento{X}{f}$ be an endomorphism with eventual image duality, and view
$f$ as a map $\ento{X}{f} \to \ento{X}{f}$ in $\Endo{\C}$. By definition, 
\[
f_* = \au{f} \from \ei{f} \to \ei{f}.
\]
\end{example}

\begin{lemma}
\label{lemma:induced}
Let $\ento{X}{f}$ and $\ento{Y}{g}$ be endomorphisms in $\C$ with eventual
image duality, and let $u \from \ento{X}{f} \to \ento{Y}{g}$ be a map in
$\Endo{\C}$. Then:
\begin{enumerate}
\item
\label{part:ind-aut}
$u_* \from \ei{f} \to \ei{g}$ is a map
\[
u_* \from \ento{\ei{f}}{\au{f}} \to \ento{\ei{g}}{\au{g}}
\]
in $\Auto{\C}$;

\item
\label{part:ind-idem}
$u\from X \to Y$ is a map
\[
u \from \ento{X}{f^\infty} \to \ento{Y}{g^\infty}
\]
in $\Endo{\C}$.
\end{enumerate}
\end{lemma}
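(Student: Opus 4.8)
I would prove both parts by unwinding the definition of the induced map $u_*$ together with the explicit formulas for the (co)limit cones from Lemma~\ref{lemma:pr}. For part~\bref{part:ind-aut}, the map $u_*\from\ei{f}\to\ei{g}$ is defined as the unique map making the obvious triangles commute: on the colimit side, $u_* \of \copr^f_n = \copr^g_n \of u$ for all $n\in\Z$, where I write $\copr^f_n$ and $\copr^g_n$ for the colimit coprojections of $f$ and $g$. I want to show $u_* \of \au{f} = \au{g} \of u_*$. The cleanest route is to use that $\ei{g}$ is a colimit, so it suffices to check the two sides agree after precomposing with every coprojection $\copr^f_n$. Using $\au{f}\of\copr^f_n = \copr^f_{n+1}$ (a consequence of Lemma~\ref{lemma:pr}, or directly from the definition of $\au{f}$) and the corresponding identity for $g$, both composites reduce to $\copr^g_{n+1}\of u$, and the equality follows. (Alternatively, one can phrase this as naturality of the right Kan extension from Proposition~\ref{propn:lim-univ}, since $u_*$ is precisely what that extension does on morphisms; but the hands-on argument is short enough to give directly.)

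**Part (ii).** For part~\bref{part:ind-idem} I must check $u \of f^\infty = g^\infty \of u$ as maps $X\to Y$. Recall $f^\infty = \iota_f \of \pi_f$ and $g^\infty = \iota_g\of\pi_g$, where $\iota_f = \pr^f_0$, $\pi_f = \copr^f_0$, and likewise for $g$. The key intermediate facts are the compatibility of $u$ with the structure maps: $u\of\iota_f = \iota_g\of u_*$ (since $\iota_f = \pr^f_0$ and $u_*$ is also characterized on the limit side by $\pr^g_0\of u_* = u\of\pr^f_0$, i.e.\ by $\iota_g\of u_* = u\of\iota_f$), and dually $\pi_g\of u = u_*\of\pi_f$ (from $\copr^g_0\of u = u_*\of\copr^f_0$). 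Granting these two identities, the computation is immediate:
\[
g^\infty \of u = \iota_g\of\pi_g\of u = \iota_g\of u_*\of\pi_f = u\of\iota_f\of\pi_f = u\of f^\infty.
\]
So the whole of part~\bref{part:ind-idem} rests on the two compatibility identities $u\of\iota_f = \iota_g\of u_*$ and $\pi_g\of u = u_*\of\pi_f$, each of which is just the $n=0$ instance of the defining cone/cocone equations for $u_*$, using that $u$ is itself a map of the two diagrams~\eqref{eq:dbl-seq}.

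**Where the work is.** There is no serious obstacle here; the only thing requiring a moment's care is being explicit about the fact that $u_*$ can be characterized \emph{either} by its compatibility with the limit projections \emph{or} by its compatibility with the colimit coprojections, and that these two characterizations agree---this is exactly the assertion, already used in the excerpt, that the induced map "on the limits" equals the induced map "on the colimits" under the canonical identification $L\iso M$. I would state this once, cleanly, at the start of the proof (it follows because the canonical map $L\to M$ is itself a map of cones in the appropriate sense, so it intertwines the two universal properties), and then part~\bref{part:ind-aut} uses the colimit characterization while part~\bref{part:ind-idem} uses both. Everything else is formal diagram-chasing with the formulas of Lemma~\ref{lemma:pr}.
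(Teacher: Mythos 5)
Your proposal is correct and follows essentially the same route as the paper: part~(ii) is exactly the paper's argument, namely that the two squares $u\of\iota_f = \iota_g\of u_*$ and $\pi_g\of u = u_*\of\pi_f$ hold by the defining ($n=0$) property of $u_*$, whence $g^\infty u = u f^\infty$. For part~(i), your leg-by-leg check against the coprojections is just an unwound version of the paper's one-line argument ($u_* f_* = g_* u_*$ by functoriality of $(-)_*$ together with $f_* = \au{f}$, $g_* = \au{g}$ from Example~\ref{eg:ind-self}), so the substance is the same.
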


\begin{proof}
For~\bref{part:ind-aut}, $uf = gu$, hence $u_* f_* = g_* u_*$ by
functoriality. But $f_* = \au{f}$ and $g_* = \au{g}$ by
Example~\ref{eg:ind-self}. 

For~\bref{part:ind-idem}, the diagram
\[
\xymatrix{
\ei{f} \ar@/^1pc/[rr]^{f^\infty}
\ar[r]_-{\iota_f} \ar[d]_{u_*}          &
X \ar[r]_-{\pi_f} \ar[d]^u              &
\ei{f} \ar[d]^{u_*}                     \\
\ei{g} \ar@/_1pc/[rr]_{g^\infty}
\ar[r]^-{\iota_g}                       &
Y \ar[r]^-{\pi_g}                       &
\ei{f}
}
\]
commutes by definition of $u_*$. 
\end{proof}

\begin{lemma}
\label{lemma:ind-idem}
Let $\ento{X}{f}$ be an endomorphism in $\C$ with eventual image
duality. Then: 
\begin{enumerate}
\item 
\label{part:ii-comm}
the endomorphisms $f$ and $f^\infty$ commute;

\item
\label{part:ii-map}
$f^\infty \from X \to X$ is a map $\ento{X}{f} \to \ento{X}{f}$ in $\Endo{\C}$;

\item
\label{part:ii-ind}
the induced map $(f^\infty)_* \from \ei{f} \to \ei{f}$ is the identity.
\end{enumerate}
\end{lemma}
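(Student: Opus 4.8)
The plan is to obtain parts~(i) and~(ii) together as a single observation built on Proposition~\ref{propn:lim-univ}, and then to deduce~(iii) by a short computation against the limit cone. Unwinding the definitions, a morphism $\ento{X}{f}\to\ento{X}{f}$ in $\Endo{\C}$ is precisely an endomorphism of $X$ that commutes with $f$, so~(i) and~(ii) are two ways of stating one and the same fact. To prove it, I would recall that $f^\infty = \iota_f\of\pi_f$ and that, by Proposition~\ref{propn:lim-univ}, both $\pi_f\from\ento{X}{f}\to\ento{\ei{f}}{\au{f}}$ and $\iota_f\from\ento{\ei{f}}{\au{f}}\to\ento{X}{f}$ are morphisms in $\Endo{\C}$. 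Their composite $\iota_f\pi_f = f^\infty$ is then a morphism $\ento{X}{f}\to\ento{X}{f}$ in $\Endo{\C}$, which is exactly~(ii); spelling out the commutativity condition gives $f^\infty f = f f^\infty$, which is~(i). (Alternatively one can check $f\of\iota_f = \iota_f\of\au{f}$ and $\pi_f\of f = \au{f}\of\pi_f$ directly, these being just the assertions that $\iota_f$ and $\pi_f$ are arrows of $\Endo{\C}$, but the composite formulation is shortest.)

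For~(iii), the induced map $(f^\infty)_*\from\ei{f}\to\ei{f}$ is by construction the unique endomorphism of $\ei{f}$ with $\pr_n\of(f^\infty)_* = f^\infty\of\pr_n$ for every $n\in\Z$, where $\bigl(\ei{f}\toby{\pr_n}X\bigr)_{n\in\Z}$ is the limit cone of Definition~\ref{defn:ei}. Using the formula $\pr_n = \iota_f\of\au{f}^{\,n}$ of Lemma~\ref{lemma:pr} together with $\pi_f\of\iota_f = 1$, I would compute
\[
f^\infty\of\pr_n = \iota_f\of\pi_f\of\iota_f\of\au{f}^{\,n} = \iota_f\of\au{f}^{\,n} = \pr_n
\]
for all $n$. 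Hence $(f^\infty)_*$ and $1_{\ei{f}}$ have the same composite with each limit projection, and since a limit cone is jointly monic, $(f^\infty)_* = 1_{\ei{f}}$.

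I do not foresee a genuine obstacle: the argument rests entirely on the universal properties of $\pi_f$ and $\iota_f$ and on the explicit description of the limit cone in Lemma~\ref{lemma:pr}. The only points calling for a moment's care are noticing that~(i) and~(ii) are literally the same assertion, and that $(f^\infty)_*$ in~(iii) is the functorially induced self-map of the fixed object $\ei{f}$, characterised through the limit cone of $\ento{X}{f}$ itself, and has nothing to do with $\ei{f^\infty}$.
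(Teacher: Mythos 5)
Your proof is correct, and part~(iii) is essentially the paper's own argument: you show $f^\infty \of \pr_n = \pr_n$ using $\pr_n = \iota_f \of \au{f}^{\,n}$ from Lemma~\ref{lemma:pr} together with $f^\infty \iota_f = \iota_f \pi_f \iota_f = \iota_f$, then conclude by the uniqueness clause of the limit. For parts~(i) and~(ii) you take a slightly more direct route than the paper: the paper obtains~(i) as the special case $g = u = f$ of Lemma~\ref{lemma:induced}\bref{part:ind-idem} (so via functoriality of $u \mapsto u_*$), whereas you simply write $f^\infty = \iota_f \of \pi_f$ as a composite of the two morphisms $\iota_f \from \ento{\ei{f}}{\au{f}} \to \ento{X}{f}$ and $\pi_f \from \ento{X}{f} \to \ento{\ei{f}}{\au{f}}$ in $\Endo{\C}$, and observe that~(i) and~(ii) are the same assertion. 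Both arguments rest on the same intertwining identities $f\iota_f = \iota_f\au{f}$ and $\pi_f f = \au{f}\pi_f$ (established when the data~\eqref{eq:data} is set up, rather than literally in Proposition~\ref{propn:lim-univ}, so the citation could be sharpened); yours avoids the detour through the induced-map machinery, while the paper's phrasing emphasises that~(i) is an instance of the already-proved naturality statement. Either way the proof is complete.
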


\begin{proof}
Part~\bref{part:ii-comm} is the case $g = u = f$ of
Lemma~\ref{lemma:induced}\bref{part:ind-idem}, and part~\bref{part:ii-map}
follows. For~\bref{part:ii-ind}, it is enough to show that for each $n \in
\Z$, the outer triangle of
\[
\xymatrix@C+2ex{
&
&
X \ar[dd]^{f^\infty}    \\
\ei{f} \ar@/^/[rru]^{\pr_n} 
\ar[r]|*+<2pt>{\scriptstyle\au{f}^{\,n}} \ar@/_/[rrd]_{\pr_n}  &
\ei{f} \ar[ru]_{\iota_f} \ar[rd]^{\iota_f}      &
\\
&
&
X
}
\]
commutes. The upper and lower triangles commute by Lemma~\ref{lemma:pr},
and the right-hand triangle commutes because $f^\infty = \iota_f \pi_f$
and $\pi_f \iota_f = 1$. 
\end{proof}

Now suppose that $\C$ \demph{has eventual image duality}, meaning that
every endomorphism in $\C$ does. There is a functor
\[
\eisole \from \Endo{\C} \to \Auto{\C}
\]
defined on objects by $\ento{X}{f} \mapsto \ento{\ei{f}}{\au{f}}$ and on
maps by $u \mapsto u_*$ (which is valid by
Lemma~\ref{lemma:induced}\bref{part:ind-aut}). There is also an inclusion
functor
\[
U \from \Auto{\C} \to \Endo{\C},
\]
with $\eisole \of U \iso 1_{\Auto{\C}}$ by Example~\ref{eg:ei-auto}, and
there are natural transformations
\[
\iota \from U \of \eisole \to 1_{\Endo{\C}},
\qquad
\pi \from 1_{\Endo{\C}} \to U \of \eisole
\]
whose components at $\ento{X}{f} \in \Endo{\C}$ are $\iota_f$ and $\pi_f$. They
satisfy $\pi \iota = 1$. Proposition~\ref{propn:lim-univ} implies:

\begin{propn}
Let $\C$ be a category with eventual image duality. Then the functor
$\eisole$ is both left and right adjoint to the inclusion $U \from
\Auto{\C} \to \Endo{\C}$. The units and counits of the adjunctions are
$\iota$, $\pi$ and the canonical isomorphism $\eisole \of U \iso
1_{\Auto{\C}}$, 
and the unit-counit composite
\[
U \of \eisole \toby{\iota} 1_{\Endo{\C}} \toby{\pi} U \of \eisole
\]
is the identity.
\qed
\end{propn}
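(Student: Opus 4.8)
The plan is to deduce everything from Proposition~\ref{propn:lim-univ} together with the general fact that a functor which is terminal among maps from one subcategory and initial among maps to it gives the two adjunctions. Concretely, I would argue as follows.

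\textbf{Setting up the two adjunctions.} First I would note that $\eisole$ is well-defined as a functor by Lemma~\ref{lemma:induced}\bref{part:ind-aut}, and that $U$ is the evident full inclusion. For the right adjoint property, I would invoke Proposition~\ref{propn:lim-univ}\bref{part:lu-lim}: for each $\ento{X}{f}$, the map $\iota_f \from U\eisole\ento{X}{f} \to \ento{X}{f}$ is terminal among maps to $\ento{X}{f}$ from objects of $\Auto{\C}$, i.e.\ it is a universal arrow from $U$ to $\ento{X}{f}$ in the sense dual to the usual one. By the standard characterization of adjunctions via universal arrows (Mac Lane, Chapter~IV), this says precisely that $U \ladj \eisole$, with counit $\iota$. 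Dually, Proposition~\ref{propn:lim-univ}\bref{part:lu-colim} says $\pi_f$ is a universal arrow from $\ento{X}{f}$ to $U$, giving $\eisole \ladj U$ with unit $\pi$. So $\eisole$ is both left and right adjoint to $U$.

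\textbf{Identifying units and counits.} For the adjunction $U \ladj \eisole$, the unit is the natural transformation $1_{\Auto{\C}} \to \eisole U$ obtained from the universal property; since $\eisole U \iso 1_{\Auto{\C}}$ by Example~\ref{eg:ei-auto} (indeed for an automorphism $g$ one has $\ei{g} = $ its underlying object, $\au{g} = g$), one checks that this unit is exactly that canonical isomorphism. The counit is $\iota$ by construction. Symmetrically, for $\eisole \ladj U$ the unit is $\pi$ and the counit is the same canonical isomorphism $\eisole U \iso 1_{\Auto{\C}}$. Here I would just remark that naturality of $\iota$ and $\pi$ was already recorded before the statement, and that the triangle identities for each adjunction amount to equations already verified: $\pi_f \iota_f = 1$ and $\iota_f$, $\pi_f$ being compatible with the splitting of $f^\infty$, together with Lemma~\ref{lemma:pr}.

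\textbf{The composite.} Finally, the assertion that $U\eisole \toby{\iota} 1_{\Endo{\C}} \toby{\pi} U\eisole$ is the identity is just the componentwise statement $\pi_f \iota_f = 1_{\ei{f}}$, which holds by the definition of $\iota_f = \pr_0$ and $\pi_f = \copr_0$ and the fact that $\copr_0 \of \pr_0$ is the canonical map $L \to M$, here an identity after the identification $L \iso M$. This is immediate from the discussion preceding Definition~\ref{defn:ei}.

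\textbf{Main obstacle.} There is no real obstacle; the content is entirely in Proposition~\ref{propn:lim-univ}. The only thing requiring a little care is bookkeeping: making sure the unit of one adjunction is the counit-side isomorphism of the other and vice versa, so that the two triangle identities are not conflated, and confirming that the ``doubly universal'' arrows $\iota$ and $\pi$ satisfy $\pi\iota = 1$ at the level of natural transformations and not merely objectwise up to coherence. Since $\Auto{\C} \incl \Endo{\C}$ is a full inclusion, naturality of all the relevant transformations is automatic once checked on objects, so even this is routine.
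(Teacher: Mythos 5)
Your argument is correct and is essentially the paper's own route: the paper states this proposition without a separate proof, presenting it as an immediate consequence of Proposition~\ref{propn:lim-univ} via the standard universal-arrow characterization of adjunctions, with the final composite being the componentwise identity $\pi_f \iota_f = 1_{\ei{f}}$. (Your aside that fullness makes naturality automatic is not needed and not quite right as a general principle, but it is harmless here since naturality of $\iota$ and $\pi$ was already recorded before the statement.)
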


Simultaneous left and right adjunctions are sometimes called ambidextrous
adjunctions or ambijunctions~\cite{Laud}.

\section{General properties of the eventual image}
\label{sec:props}

Here we establish some properties of the constructions $f \mapsto \ei{f}$
and $f \mapsto f^\infty$. They largely concern invariance: when do two
endomorphisms have the same eventual image or the same associated
idempotent?

Throughout this section, let $\C$ be a category. 

Despite the notation, $f^\infty \of f \neq f^\infty$ in general. For
example, they are not equal when $f$ is a nontrivial automorphism, by
Example~\ref{eg:ei-auto}. But it is true that $f^\infty \of f = f \of
f^\infty$, by Lemma~\ref{lemma:ind-idem}\bref{part:ii-comm}. Moreover:

\begin{propn}
\label{propn:timescale}
Let $\ento{X}{f}$ be an endomorphism in $\C$ with eventual image
duality. Let $n \geq 1$. Then $\ei{f^n} \iso \ei{f}$ and $(f^n)^\infty =
f^\infty$.
\end{propn}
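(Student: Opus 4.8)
The plan is to exploit the fact, established in Proposition~\ref{propn:lim-univ}, that the eventual image is characterized by a universal property: $\iota_f \from \ento{\ei{f}}{\au{f}} \to \ento{X}{f}$ is terminal among maps to $\ento{X}{f}$ from automorphisms in $\C$, and dually $\pi_f$ is initial among maps from $\ento{X}{f}$ to automorphisms. The key observation is that an automorphism $\ento{A}{a}$ equipped with a map $u \from \ento{A}{a} \to \ento{X}{f}$ in $\Endo{\C}$ is \emph{the same thing} as an automorphism equipped with a map $\ento{A}{a^n} \to \ento{X}{f^n}$, because $u a = f u$ already forces $u a^n = f^n u$, and the converse is recovered since $a$ is invertible (so $a = a^n \of (a^{n-1})^{-1}$, and one checks $ua = fu$ follows — or more cleanly, $\ento{A}{a}$ and $\ento{A}{a^n}$ generate the same subcategory of $\Auto{\C}$ up to the relevant data). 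Thus the comma category of automorphisms over $\ento{X}{f}$ coincides with that over $\ento{X}{f^n}$, and their terminal objects agree.

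Concretely, first I would show that $\ento{\ei{f}}{\au{f}}$ equipped with $\iota_f$ satisfies the universal property defining $\ei{f^n}$. Given any automorphism $\ento{A}{a}$ with a map $v \from \ento{A}{a} \to \ento{X}{f^n}$ in $\Endo{\C}$, I need a unique factorization through $\iota_{f^n}$; but $\ento{A}{a}$ with $v$ is also an object over $\ento{X}{f^n}$, and since $(\au{f})^n$ is an automorphism on $\ei{f}$ and $\iota_f \from \ento{\ei{f}}{(\au{f})^n} \to \ento{X}{f^n}$ (because $\iota_f \of (\au{f})^n = f^n \of \iota_f$, which follows from $\iota_f \au{f} = f \iota_f$ by induction), it suffices to know that $\ento{\ei{f}}{(\au{f})^n}$ with $\iota_f$ is terminal over $\ento{X}{f^n}$ among automorphisms. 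That terminality is exactly Proposition~\ref{propn:lim-univ}\bref{part:lu-lim} applied to $f^n$ in place of $f$ — except we want to \emph{deduce} it for $f$, not assume it for $f^n$. So instead the cleanest route is: any automorphism mapping to $\ento{X}{f^n}$ maps to $\ento{X}{f}$ with the \emph{same} underlying map (one must check $va = fv$ given $va^n = f^n v$ and $a$ invertible; this is the one genuinely fiddly point, and I'd handle it by noting $f$ and $v$ and $a$ all commute appropriately, or by passing through the idempotent description). Conversely every map from an automorphism to $\ento{X}{f}$ is one to $\ento{X}{f^n}$. Hence the terminal such object is the same, giving $\ei{f^n} \iso \ei{f}$ via $\iota_f = \iota_{f^n}$, compatibly with the automorphism structure once we match $\au{f^n}$ with $(\au{f})^n$.

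From the isomorphism $\ei{f^n} \iso \ei{f}$ identifying $\iota_{f^n}$ with $\iota_f$ and $\pi_{f^n}$ with $\pi_f$ (the latter by the dual argument using Proposition~\ref{propn:lim-univ}\bref{part:lu-colim}), the statement $(f^n)^\infty = f^\infty$ is immediate: by definition $(f^n)^\infty = \iota_{f^n} \of \pi_{f^n} = \iota_f \of \pi_f = f^\infty$. Alternatively, one can give a more hands-on limit-level proof: the diagram $\cdots \toby{f^n} X \toby{f^n} X \toby{f^n} \cdots$ is the subdiagram of~\eqref{eq:dbl-seq} indexed by multiples of $n$, so it is cofinal both ways and hence has the same limit and colimit, with the canonical map $L \to M$ unchanged; this reproves $\ei{f^n} \iso \ei{f}$ directly and shows $\pr_0, \copr_0$ are literally unchanged, whence $(f^n)^\infty = \copr_0 \of \pr_0 = f^\infty$. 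I expect the main obstacle to be the bookkeeping that reconciles $\au{f^n}$ with $(\au{f})^n$ and the verification that passing between maps into $\ento{X}{f}$ and into $\ento{X}{f^n}$ is a genuine bijection at the level of objects of $\Auto{\C}$ — in particular the implication $va^n = f^n v \Rightarrow va = fv$, which uses invertibility of $a$ in an essential way; everything else is formal manipulation of the universal properties already proved.
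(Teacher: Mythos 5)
Your primary route breaks down at exactly the point you flag as ``fiddly'': the passage between automorphisms over $\ento{X}{f}$ and automorphisms over $\ento{X}{f^n}$ is not a bijection, and the implication $va^n = f^n v \Rightarrow va = fv$ is simply false, invertibility of $a$ notwithstanding. A counterexample in $\FinSet$: take $X = A = \{1,2\}$, $f = 1_X$, $n = 2$, $a$ the transposition, $v = 1_X$; then $va^2 = 1 = f^2v$ but $va = a \neq 1 = fv$. More structurally, the terminal object of $\Auto{\C}$ over $\ento{X}{f}$ is $\bigl(\ento{\ei{f}}{\au{f}},\iota_f\bigr)$ while the terminal object over $\ento{X}{f^n}$ is $\bigl(\ento{\ei{f}}{\au{f}^{\,n}},\iota_f\bigr)$: same underlying object and same map to $X$ but different automorphisms, so the two comma categories cannot be identified in the way your argument needs; there is a functor $(\ento{A}{a},v)\mapsto(\ento{A}{a^n},v)$ in one direction, and nothing coming back. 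A further problem, even if the comparison could be repaired, is that Proposition~\ref{propn:lim-univ} assumes the endomorphism already has eventual image duality, so before you may write $\ei{f^n}$ or invoke its universal property you must first prove that $f^n$ has eventual image duality---which is part of what the proposition is asserting; terminality in a comma category does not by itself deliver the limit, the colimit, and the invertibility of the canonical comparison map that the definition requires.

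Your fallback sketch, by contrast, is essentially the paper's proof: the inclusion $(n\Z,\leq)\incl(\Z,\leq)$ is cofinal for the limit and (dually) for the colimit, giving isomorphisms between the (co)limits of the $f$- and $f^n$-diagrams; one then checks that these isomorphisms are compatible with the zeroth projections and coprojections, hence with the canonical limit-to-colimit maps, which is what yields eventual image duality for $f^n$ and the equality $(f^n)^\infty = f^\infty$. That compatibility check is the content of the two diagrams in the paper's proof (showing $\ell = k^{-1}$ and then assembling $f^\infty = (f^n)^\infty$); you assert it rather than verify it, but it is routine. Note also a small slip of order: $f^\infty = \iota_f\of\pi_f = \pr_0\of\copr_0$, whereas $\copr_0\of\pr_0$ is the canonical map from the limit to the colimit. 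So the fix is to drop the universal-property transfer, promote the cofinality argument to the main proof, and supply the compatibility diagrams explicitly.
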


\begin{proof}
The inclusion $(n\Z, \leq) \incl (\Z, \leq)$ is cofinal,%
\footnote{We use the terminological convention in which \emph{cofinal} functors
leave \emph{limits} unchanged.}
 so the
canonical map
\[
\ei{f}
=
\lim\Bigl( \cdots \toby{f} X \toby{f} \cdots \Bigr)
\ \toby{\ \ k\ \ }\ 
\lim\Bigl( \cdots \toby{f^n} X \toby{f^n} \cdots \Bigr)
=
\ei{f^n}
\]
is an isomorphism. A dual statement holds for colimits, giving an
isomorphism $\ell \from \ei{f^n} \to \ei{f}$. The diagram
\[
\xymatrix{
\ei{f} \ar[dd]_k \ar[rd]^{\iota_f} \ar[rr]^1    &
&
\ei{f}  \\ 
&
X \ar[ru]^{\pi_f} \ar[rd]_{\pi_{f^n}}           &
\\
\ei{f^n} \ar[ru]_{\iota_{f^n}} \ar[rr]_1        &
&
\ei{f^n} \ar[uu]_\ell
}
\]
commutes, and $k$ and $\ell$ are isomorphisms, so $\ell = k^{-1}$. But now
the commutative diagram
\[
\xymatrix{
&
\ei{f} \ar[rd]_{\iota_f} \ar[dd]^k      &
\\
X 
\ar@/^5pc/[rr]^{f^\infty} \ar@/_5pc/[rr]_{(f^n)^\infty}
\ar[ru]_{\pi_f} \ar[rd]^{\pi_{f^n}}     &
&
X       \\
&
\ei{f^n} \ar[ru]^{\iota_{f^n}}  &
\\
}
\]
shows that $f^\infty = (f^n)^\infty$. 
\end{proof}

The property of the eventual image established in
Proposition~\ref{propn:timescale} is shared by other dynamical
constructs. For example, every holomorphic self-map $f$ of a compact
Riemann surface $X$ has a Julia set $J(f) \sub X$, and $J(f^n) = J(f)$ for
all $n \geq 1$ (Lemma~4.2 of Milnor~\cite{Miln}). If our endomorphism $f$
is applied to $X$ once per second, then the equality $\ei{f} = \ei{f^{60}}$
means that the eventual image is the same whether the process is observed
every second or every minute: it is independent of timescale.

In symbolic and topological dynamics, there is a standard notion of shift
equivalence (Wagoner~\cite{Wago}; Williams~\cite{WillCOD}, p.~342).  Two
endomorphisms $\ento{X}{f}$ and $\ento{Y}{g}$ in $\C$ are \demph{shift
equivalent} if there exist $n \in \N$ and maps
\begin{align}
\label{eq:shift-eq}
\oppair{\ento{X}{f}}{\ento{Y}{g}}{u}{v}
\end{align}
in $\Endo{\C}$ such that $vu = f^n$ and $uv = g^n$. (Then the same is true
for all $N \geq n$: replace $u$ by $uf^{N - n}$.) When $\ento{X}{f}$ and
$\ento{Y}{g}$ both have eventual image duality, call $\ento{X}{f}$ and
$\ento{Y}{g}$ \demph{eventually equivalent} if there exist
maps~\eqref{eq:shift-eq} such that $vu = f^\infty$ and $uv = g^\infty$.

\begin{propn}
\label{propn:shift-eqv}
Let $\ento{X}{f}$ and $\ento{Y}{g}$ be endomorphisms in $\C$, both with eventual
image duality. Then
\begin{align*}
        &\ento{X}{f} \text{ and } \ento{Y}{g} \text{ are shift equivalent}
\\
\implies        
        &\ento{\ei{f}}{\au{f}} \iso \ento{\ei{g}}{\au{g}}      
\\
\iff    &\ento{X}{f} \text{ and } \ento{Y}{g} \text{ are eventually equivalent.}
\end{align*}
\end{propn}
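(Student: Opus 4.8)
The plan is to prove the two implications separately. For the downward implication (shift equivalent $\implies$ isomorphic eventual images), I would start from maps $\oppair{\ento{X}{f}}{\ento{Y}{g}}{u}{v}$ in $\Endo{\C}$ with $vu = f^n$ and $uv = g^n$ for some $n \in \N$. Applying the eventual image functor $\eisole$ and using functoriality together with Example~\ref{eg:ind-self}, I get maps $u_* \from \ento{\ei{f}}{\au{f}} \to \ento{\ei{g}}{\au{g}}$ and $v_* \from \ento{\ei{g}}{\au{g}} \to \ento{\ei{f}}{\au{f}}$ in $\Auto{\C}$ satisfying $v_* u_* = (f^n)_* = (f_*)^n = \au{f}^{\,n}$ and $u_* v_* = \au{g}^{\,n}$. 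Since $\au{f}$ and $\au{g}$ are automorphisms by the first lemma of Section~\ref{sec:defns}, so are $\au{f}^{\,n}$ and $\au{g}^{\,n}$; hence $u_*$ has both a left and a right inverse (up to composing with these automorphisms), making it an isomorphism in $\Auto{\C}$. Concretely, $\au{f}^{-n} v_*$ is a two-sided inverse to $u_*$ once one checks it commutes with $\au{f}$, which it does because everything in sight is built from $\au{f}$, $\au{g}$, $u_*$, $v_*$, all of which are maps in $\Auto{\C}$. This gives $\ento{\ei{f}}{\au{f}} \iso \ento{\ei{g}}{\au{g}}$.

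For the equivalence (isomorphic eventual images $\iff$ eventually equivalent), the forward direction is the more substantial one. Given an isomorphism $w \from \ento{\ei{f}}{\au{f}} \to \ento{\ei{g}}{\au{g}}$ in $\Auto{\C}$, I would define $u \from X \to Y$ as the composite $X \toby{\pi_f} \ei{f} \toby{w} \ei{g} \toby{\iota_g} Y$ and $v \from Y \to X$ as $Y \toby{\pi_g} \ei{g} \toby{w^{-1}} \ei{f} \toby{\iota_f} X$. First I must check that $u$ and $v$ are maps in $\Endo{\C}$, i.e.\ $uf = gu$ and $vg = fv$; this follows because $\pi_f$, $\iota_g$ are maps in $\Endo{\C}$ (Lemma~\ref{lemma:induced}) and $w$ intertwines $\au{f}$ with $\au{g}$, using also that $\pi_f \of f = \au{f} \of \pi_f$ and $f \of \iota_g$... wait, rather $\iota_g \of \au{g} = g \of \iota_g$ — i.e.\ $\iota$ and $\pi$ are the natural transformations $U\eisole \to 1$ and $1 \to U\eisole$. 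Then I compute $vu = \iota_f \of w^{-1} \of \pi_g \of \iota_g \of w \of \pi_f = \iota_f \of w^{-1} \of w \of \pi_f = \iota_f \of \pi_f = f^\infty$, using $\pi_g \iota_g = 1$; symmetrically $uv = g^\infty$. So $\ento{X}{f}$ and $\ento{Y}{g}$ are eventually equivalent.

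For the reverse direction of the equivalence (eventually equivalent $\implies$ isomorphic eventual images): given $\oppair{\ento{X}{f}}{\ento{Y}{g}}{u}{v}$ in $\Endo{\C}$ with $vu = f^\infty$ and $uv = g^\infty$, I would again apply $\eisole$ to get $u_*, v_*$ in $\Auto{\C}$ with $v_* u_* = (f^\infty)_* = 1_{\ei{f}}$ and $u_* v_* = (g^\infty)_* = 1_{\ei{g}}$ by Lemma~\ref{lemma:ind-idem}\bref{part:ii-ind}. Hence $u_*$ is an isomorphism in $\Auto{\C}$ with inverse $v_*$, giving $\ento{\ei{f}}{\au{f}} \iso \ento{\ei{g}}{\au{g}}$ directly. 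I expect the main obstacle to be bookkeeping in the forward direction of the equivalence: verifying carefully that the composites $u$ and $v$ defined via $w$ really are morphisms in $\Endo{\C}$, which comes down to unwinding the naturality of $\iota$ and $\pi$ and the hypothesis that $w$ lives in $\Auto{\C}$ rather than merely $\C$. Everything else is a short diagram chase using $\pi_f \iota_f = 1$, $\pi_g \iota_g = 1$, functoriality of $\eisole$, and Lemma~\ref{lemma:ind-idem}\bref{part:ii-ind}.
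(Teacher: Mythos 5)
Your proposal is correct and follows essentially the same route as the paper: apply the functoriality of $(\:)_*$ together with Example~\ref{eg:ind-self} and Lemma~\ref{lemma:ind-idem}\bref{part:ii-ind} for the two implications out of shift/eventual equivalence, and define $u = \iota_g \of w \of \pi_f$, $v = \iota_f \of w^{-1} \of \pi_g$ for the remaining direction, using $\pi_g\iota_g = 1$ and $\iota_f\pi_f = f^\infty$. The only cosmetic difference is that you exhibit $\au{f}^{-n}v_*$ as an explicit inverse of $u_*$, where the paper simply notes that $v_*u_* = \au{f}^{\,n}$ and $u_*v_* = \au{g}^{\,n}$ are isomorphisms and concludes $u_*$ is one; both are fine.
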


\begin{proof}
Suppose that $\ento{X}{f}$ and $\ento{Y}{g}$ are shift equivalent, with
maps $u$ and $v$ as in~\eqref{eq:shift-eq}. They induce maps
\begin{align}
\label{eq:induced-op}
\oppair{\ento{\ei{f}}{\au{f}}}{\ento{\ei{g}}{\au{g}}}{u_*}{v_*},
\end{align}
which satisfy
\[
v_* u_* = (vu)_* = (f^n)_* = (f_*)^n = \au{f}^{\,n}
\]
(the last step by Example~\ref{eg:ind-self}). Hence $v_* u_*$ is an
isomorphism, and dually, so is $u_* v_*$. It follows that $u_*$ is an
isomorphism, giving $\ento{\ei{f}}{\au{f}} \iso \ento{\ei{g}}{\au{g}}$.

Next suppose that $\ento{\ei{f}}{\au{f}} \iso
\ento{\ei{g}}{\au{g}}$. Choose an isomorphism $k$, and define
\begin{align*}
u       &
= \Bigl( 
\ento{X}{f} \toby{\pi_f} 
\ento{\ei{f}}{\au{f}} \toby{k} 
\ento{\ei{g}}{\au{g}} \toby{\iota_g} \ento{Y}{g} \Bigr),  \\
v       &
= \Bigl(
\ento{Y}{g} \toby{\pi_g}
\ento{\ei{g}}{\au{g}} \toby{k^{-1}}
\ento{\ei{f}}{\au{f}} \toby{\iota_f}
\ento{X}{f} \Bigr).
\end{align*}
Then $vu = f^\infty$ and $uv = g^\infty$, so $\ento{X}{f}$ and
$\ento{Y}{g}$ are eventually equivalent.

Finally, suppose that $\ento{X}{f}$ and $\ento{Y}{g}$ are eventually
equivalent, and take maps $u$ and $v$ as in the definition. The induced
maps~\eqref{eq:induced-op} satisfy
\[
v_* u_* = (vu)_* = (f^\infty)_* = 1_{\ei{f}}
\]
by Lemma~\ref{lemma:ind-idem}\bref{part:ii-ind}, and dually, $u_* v_* =
1_{\ei{g}}$. Hence $u_*$ and $v_*$ are mutually inverse maps between
$\ento{\ei{f}}{\au{f}}$ and $\ento{\ei{g}}{\au{g}}$.
\end{proof}

\begin{cor}
\label{cor:vu-uv}
Let $\oppair{X}{Y}{u}{v}$ be maps in $\C$, and suppose that $vu$ and $uv$
have eventual image duality. Then $\ento{\ei{vu}}{\au{vu}} \iso
\ento{\ei{uv}}{\au{uv}}$.
\end{cor}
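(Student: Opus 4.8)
The plan is to reduce the corollary to Proposition~\ref{propn:shift-eqv} by exhibiting $\ento{X}{vu}$ and $\ento{Y}{uv}$ as shift equivalent endomorphisms in $\C$. First I would record that $u \from \ento{X}{vu} \to \ento{Y}{uv}$ and $v \from \ento{Y}{uv} \to \ento{X}{vu}$ are maps in $\Endo{\C}$: indeed $u \of (vu) = (uv) \of u$ and $v \of (uv) = (vu) \of v$, both of which are immediate from associativity of composition. Then $v \of u = vu = (vu)^1$ and $u \of v = uv = (uv)^1$, so taking $n = 1$ in the definition of shift equivalence shows that $\ento{X}{vu}$ and $\ento{Y}{uv}$ are shift equivalent in $\C$.

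Since both $vu$ and $uv$ are assumed to have eventual image duality, Proposition~\ref{propn:shift-eqv} now applies directly: shift equivalence of $\ento{X}{vu}$ and $\ento{Y}{uv}$ implies $\ento{\ei{vu}}{\au{vu}} \iso \ento{\ei{uv}}{\au{uv}}$ in $\Auto{\C}$, which is exactly the claimed conclusion.

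The only point requiring any care is the verification that $u$ and $v$ are genuinely morphisms in $\Endo{\C}$ with respect to the endomorphisms $vu$ and $uv$ (rather than with respect to $f$ and $g$ as in the statement of the proposition), but this is the trivial bracketing computation above and presents no real obstacle. In fact there is nothing here beyond specializing the general machinery; the corollary is essentially the statement that $vu$ and $uv$ are always shift equivalent, combined with the first implication of Proposition~\ref{propn:shift-eqv}.
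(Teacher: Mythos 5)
Your proposal is correct and is essentially identical to the paper's proof: the paper likewise observes that $u$ and $v$ give maps $\ento{X}{vu} \rightleftarrows \ento{Y}{uv}$ in $\Endo{\C}$ forming a shift equivalence (with $n=1$), and then invokes Proposition~\ref{propn:shift-eqv}. Your extra verification that $u$ and $v$ intertwine $vu$ and $uv$ is the same trivial bracketing check the paper leaves implicit.
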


\begin{proof}
The maps $\oppair{\ento{X}{vu}}{\ento{Y}{uv}}{u}{v}$ in $\Endo{\C}$
define a shift equivalence, so Proposition~\ref{propn:shift-eqv} applies.
\end{proof}

Finally, let $f$ and $g$ be endomorphisms of the same object of $\C$. In
general, $(gf)^\infty \neq g^\infty f^\infty$. For example, let $f$ and $g$
be split idempotents such that $gf$ is not idempotent (such as the linear
operators on $\R^2$ represented by $\bigl(\begin{smallmatrix}0 &0 \\ 0&
1\end{smallmatrix}\bigr)$ and $\bigl(\begin{smallmatrix}1 &1\\ 0&
0\end{smallmatrix}\bigr)$).  By Example~\ref{eg:ei-idem}, $g^\infty
f^\infty$ is $gf$, which is not idempotent and so not equal to
$(gf)^\infty$. However:

\begin{propn}
Let $f$ and $g$ be commuting endomorphisms of an object $X$ of
$\C$. Suppose that $f$, $g$ and $gf$ have eventual image duality. Then
$(gf)^\infty = g^\infty f^\infty$. 
\end{propn}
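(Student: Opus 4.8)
The plan is to exhibit $e := g^\infty \of f^\infty$ as an idempotent on $X$, split it, observe that $gf$ restricts to an \emph{automorphism} of the splitting, and then identify $e$ with the idempotent $(gf)^\infty$ by squeezing the latter between $e$ and itself from both sides. First I would record the commutativity relations. Since $fg = gf$, the map $g$ is an endomorphism of $\ento{X}{f}$ in $\Endo{\C}$, so Lemma~\ref{lemma:induced}\bref{part:ind-idem} gives $g\of f^\infty = f^\infty\of g$; symmetrically $f\of g^\infty = g^\infty\of f$. Applying Lemma~\ref{lemma:induced}\bref{part:ind-idem} once more, now to $f^\infty$ as an endomorphism of $\ento{X}{g}$, yields $f^\infty\of g^\infty = g^\infty\of f^\infty =: e$. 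Thus $e$ is idempotent, $f$ and $g$ --- hence $gf$ --- commute with $e$, and $f^\infty e = e f^\infty = e = g^\infty e = e g^\infty$. Split $e$ as $X\toby{q} E\toby{j} X$ with $qj = 1_E$, $jq = e$; because $gf$ commutes with $e$ it restricts to an endomorphism $h := q\of gf\of j$ of $E$, with $j\of h = gf\of j$.

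Next I would check that $h$ is invertible. Put $\bar f := \iota_f\of\au{f}^{-1}\of\pi_f$ and $\bar g := \iota_g\of\au{g}^{-1}\of\pi_g$; from Proposition~\ref{propn:lim-univ} (in the forms $f\iota_f = \iota_f\au{f}$ and $\pi_f f = \au{f}\pi_f$) together with $\pi_f\iota_f = 1$ one reads off $f\of\bar f = \bar f\of f = f^\infty$, and likewise for $g$. A short calculation --- using only that $f$ commutes with $e$ and that $f^\infty e = e f^\infty = e$ (whence $f^\infty j = j$) --- shows that $q\of f\of j$ and $q\of\bar f\of j$ are mutually inverse endomorphisms of $E$, and similarly $q\of g\of j$ and $q\of\bar g\of j$. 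Since $efj = fj$, we get $h = (q\of g\of j)\of(q\of f\of j)$, a composite of automorphisms of $E$, so $h$ is invertible and $\ento{E}{h}\in\Auto{\C}$.

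Now the two-sided squeeze. On one side, $j\from\ento{E}{h}\to\ento{X}{gf}$ is a map into $\ento{X}{gf}$ from an automorphism, so the terminality in Proposition~\ref{propn:lim-univ}\bref{part:lu-lim} provides $\phi$ with $\iota_{gf}\of\phi = j$; composing with $\pi_{gf}$ on the left gives $\phi = \pi_{gf}\of j$, hence
\[
(gf)^\infty\of e \;=\; \iota_{gf}\of\pi_{gf}\of j\of q \;=\; \iota_{gf}\of\phi\of q \;=\; j\of q \;=\; e .
\]
On the other side, $f$ and $g$ commute with $gf$, so they induce $f_*,g_*\from\ei{gf}\to\ei{gf}$ with $f_*\of g_* = g_*\of f_* = \au{gf}$ by Example~\ref{eg:ind-self} and functoriality; hence $f_*$ and $g_*$ are invertible. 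Then $\pi_{gf}$ is a map $\ento{X}{f}\to\ento{\ei{gf}}{f_*}$ into an automorphism (as $\pi_{gf}f = f_*\pi_{gf}$), and likewise $\ento{X}{g}\to\ento{\ei{gf}}{g_*}$, so the initiality in Proposition~\ref{propn:lim-univ}\bref{part:lu-colim} gives $\pi_{gf} = d\of\pi_f = d'\of\pi_g$ for suitable $d,d'$. Using $\pi_f f^\infty = \pi_f$ and $\pi_g g^\infty = \pi_g$ we obtain $\pi_{gf}f^\infty = \pi_{gf} = \pi_{gf}g^\infty$, so $\pi_{gf}\of e = \pi_{gf}$ and
\[
(gf)^\infty\of e \;=\; \iota_{gf}\of\pi_{gf}\of e \;=\; \iota_{gf}\of\pi_{gf} \;=\; (gf)^\infty .
\]
Comparing the two displays yields $e = (gf)^\infty$, which is the assertion.

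The step I expect to require the most work is the invertibility of $h$: everything else is either formal (the universal properties of $\iota_{gf}$ and $\pi_{gf}$) or a brief idempotent manipulation, whereas placing $\ento{E}{h}$ in $\Auto{\C}$ --- which is exactly what licenses the use of the terminality of $\iota_{gf}$ and so supplies the identity $(gf)^\infty\of e = e$ --- needs the partial inverses $\bar f,\bar g$ and a genuine computation. The conceptual point to keep in view is that $e$ and $(gf)^\infty$ are a priori only known to have isomorphic splittings, and isomorphic splittings do not force two idempotents on $X$ to agree; it is precisely the pair of one-sided identities above --- one arising on the limit side, one on the colimit side --- that pins $e$ down as $(gf)^\infty$.
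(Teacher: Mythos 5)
Your colimit-side half is sound: the factorizations $\pi_{gf} = d \of \pi_f = d' \of \pi_g$ through the automorphisms $f_*, g_*$ on $\ei{gf}$ do give $\pi_{gf} \of g^\infty f^\infty = \pi_{gf}$, hence $(gf)^\infty \of e = (gf)^\infty$, and granted your splitting the invertibility of $h$ and the identity $(gf)^\infty \of e = e$ also check out. But there is a genuine gap at the very first constructive step: you ``split $e$ as $X \toby{q} E \toby{j} X$'' with no justification. The category $\C$ here is arbitrary --- the proposition does not assume Cauchy-completeness --- and the hypotheses only provide splittings of the three idempotents $f^\infty$, $g^\infty$ and $(gf)^\infty$ individually (via their eventual images). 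The product of two commuting split idempotents is again idempotent, but it need not split in a general category; equivalently, your $e$ splits iff the idempotent $\pi_f \of g^\infty \of \iota_f$ on $\ei{f}$ splits, and nothing in the hypotheses guarantees that. Since the entire limit-side half of your argument --- producing $\ento{E}{h} \in \Auto{\C}$ with the map $j$ into $\ento{X}{gf}$, which is exactly what licenses the appeal to terminality of $\iota_{gf}$ and yields $(gf)^\infty \of e = e$ --- rests on this splitting, the proof is incomplete as written. (Indeed, knowing that $e$ splits with $gf$ restricting to an automorphism of the splitting object is essentially the content of the proposition, so this is not a step one can wave through.)

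The paper supplies precisely the missing ingredient by a different device: it views the commuting pair as a functor $(\Z,\leq)\times(\Z,\leq) \to \C$, uses cofinality of the diagonal to identify $\ei{gf}$ with the limit of $\cdots \toby{g_*} \ei{f} \toby{g_*} \ei{f}$ (and dually with a colimit), and deduces that $\ento{\ei{f}}{g_*}$ has eventual image duality with $\ei{g_*} \iso \ei{gf}$. The resulting factorizations $\iota_{gf} = \iota_f \of \iota_{g_*}$ and $\pi_{gf} = \pi_{g_*} \of \pi_f$ then give $(gf)^\infty = g^\infty f^\infty$ by a short diagram chase, with no need to split $g^\infty f^\infty$ beforehand. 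If you want to salvage your outline, you would need to prove such a statement yourself (for instance, that the idempotent $\pi_f \of g^\infty \of \iota_f$ on $\ei{f}$ has eventual image duality, whence it splits); as it stands, the splitting is an unsupported assumption. A small additional slip: the relations $f\of\iota_f = \iota_f\of\au{f}$ and $\pi_f\of f = \au{f}\of\pi_f$ come from the definition of $\au{f}$ (or Lemma~\ref{lemma:pr}), not from Proposition~\ref{propn:lim-univ}.
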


\begin{proof}
Consider the commutative diagram
\[
\xymatrix{
\ddots \ar@{.>}[rd]^{gf}&
\vdots \ar[d]^g         &
\vdots \ar[d]^g         &
\\
\cdots \ar[r]^f         &
X \ar[r]^f \ar[d]^g \ar@{.>}[rd]^{gf}   &
X \ar[r]^f \ar[d]^g     &
\cdots  \\
\cdots \ar[r]^f         &
X \ar[r]^f \ar[d]^-g    &
X \ar[r]^f \ar[d]^-g \ar@{.> }[rd]^(0.4){gf}&
\cdots  \\
&
\vdots  &
\vdots  &
\ddots
}
\]
The solid part shows a functor $(\Z, \leq) \times (\Z, \leq) \to \C$, which
restricted to the diagonal is the functor $(\Z, \leq) \to \C$ shown as the
dotted part. Since the diagonal subset of $\Z \times \Z$ is cofinal, the
dotted and solid parts have the same limits. Now the dotted part has limit
$\ei{gf}$, and the limit of the solid part can be calculated by taking
limits in rows and then columns:
\[
\xymatrix{
\vdots \ar[d]_-{g_*}    &
&
\vdots \ar[d]^g         &
\vdots \ar[d]^g         &
\\
\ei{f} \ar[d]_-{g_*}    &
\cdots \ar[r]^f         &
X \ar[r]^f \ar[d]^g     &
X \ar[r]^f \ar[d]^g     &
\cdots  \\
\ei{f} \ar[d]_-{g_*}    &
\cdots \ar[r]^f         &
X \ar[r]^f \ar[d]^-g    &
X \ar[r]^f \ar[d]^-g    &
\cdots  \\
\vdots  &
&
\vdots  &
\vdots  
}
\]
Thus, the limit of $\cdots \toby{g_*} \ei{f} \toby{g_*} \cdots$ is
$\ei{gf}$, and the $0$th limit projection $\iota_{g_*}\from \ei{gf} \to
\ei{f}$ makes the triangle
\begin{align}
\label{eq:two-ways}
\begin{array}{c}
\xymatrix{
\ei{gf} \ar[rd]^{\iota_{gf}} \ar[d]_{\iota_{g_*}}       &
\\
\ei{f} \ar[r]_-{\iota_f} &
X
}
\end{array}
\end{align}
commute. The dual argument applies to colimits. 
Putting together
triangle~\eqref{eq:two-ways} with its dual gives a commutative diagram
\[
\xymatrix{
\ei{gf} \ar[d]_{\iota_{g_*}} \ar[rd]^{\iota_{gf}} 
&
&
\\
\ei{f} \ar[r]_-{\iota_f} 
&
X \ar[r]^-{\pi_f} \ar[rd]_{\pi_{gf}}     
&
\ei{f} \ar[d]^{\pi_{g_*}}       \\
&
&
\ei{gf}.
}
\]
Since $\pi_f \iota_f = 1$ and $\pi_{gf} \iota_{gf} = 1$, it follows that
$\pi_{g_*} \iota_{g_*} = 1$, so that $\ento{\ei{f}}{g_*}$ has eventual image
duality with $\ei{g_*} \iso \ei{gf}$. 

Now consider the diagram
\[
\xymatrix{
X \ar[r]^-{\pi_f} \ar[rd]_{\pi_{gf}}^{\text{\ (A)}}&
\ei{f} \ar[r]^-{\iota_f} \ar[d]^{\pi_{g_*}}      &
X \ar[d]^{\pi_g}        \\
&
\ei{gf} \ar[rd]^{\iota_{gf}}_{\text{(B)\ \ \ }}
\ar[d]_{\iota_{g_*}} \ar[r]^{(\iota_f)_*}       &
\ei{g} \ar[d]^{\iota_g} \\
&
\ei{f} \ar[r]_-{\iota_f} &
X.
}
\]
Triangles~(A) and~(B) have already been shown to commute, and the two
squares commute because $\iota_f$ is a map $\ento{\ei{f}}{g_*} \to
\ento{X}{g}$. Hence the triangle between the three copies of $X$ commutes;
that is, $(gf)^\infty = g^\infty f^\infty$.
\end{proof}

\section{Factorization systems and the main theorem}
\label{sec:fact}

Here we prove our main theorem: a category admitting a factorization
system of a suitable kind has eventual image duality.

Recall that a \demph{factorization system} on a category $\C$ consists of
subcategories $\cat{L}$ and $\cat{R}$, each containing all the objects and
isomorphisms, such that every map in $\C$ factorizes as a map in $\cat{L}$
followed by a map in $\cat{R}$ uniquely up to unique isomorphism
(\cite{FrKe}, Section~2). We call maps in $\cat{L}$ \demph{coverings} and
denote them by $\cov$; maps in $\cat{R}$ are \demph{embeddings},
$\emb$. The uniqueness of factorization up to unique isomorphism means that
for any solid commutative square
\[
\xymatrix@R-4ex{
&
I \ar@{ >->}[rd] \ar@{.>}[dd]_{\iso}^k   &       
\\
X \ar@{->>}[ru] \ar@{->>}[rd]   &
&
Y,      \\
&
I' \ar@{ >->}[ru]        &
}
\]
there is a unique isomorphism $k$ such that the triangles commute. When a
map $f\from X \to Y$ factorizes as $X \cov I \emb Y$, we write $I$ as
$\im(f)$. Typically we leave the maps $X \cov \im(f) \emb Y$ nameless.

The axioms have some standard elementary consequences (proofs omitted).

\begin{lemma}
\label{lemma:fs-basic}
Every factorization system has the following properties.
\begin{enumerate}
\item 
\label{part:fsb-iso}
\demph{(Isomorphisms)} A map that is both a covering and an embedding is an
isomorphism.

\item
\demph{(Two out of three)} 
For composable maps $f$ and $g$, if $gf$ and $g$ are embeddings then so is
$f$, and if $gf$ and $f$ are coverings then so is $g$;

\item
\label{part:fsb-orth}
\demph{(Orthogonality)} 
The coverings are left orthogonal to the embeddings: for any solid
commutative square
\[
\xymatrix{
W \ar[r] \ar@{->>}[d]   &
X \ar@{ >->}[d]          \\
Y \ar[r] \ar@{.>}[ru]   &
Z,
}
\]
there is a unique dotted arrow such that the triangles commute.

\item
\demph{(Functoriality)}
For any commutative square
\[
\xymatrix{
X \ar[r]^f \ar[d]_u     &
Y \ar[d]^w      \\
X' \ar[r]_{f'}  &
Y',
}
\]
there is a unique map $v\from \im(f) \to \im(f')$ such that
\[
\xymatrix{
X \ar@{->>}[r] \ar[d]_u &
\im(f) \ar@{ >->}[r] \ar@{.>}[d]|*+<4pt>{\scriptstyle v}       &
Y \ar[d]^w      \\
X' \ar@{->>}[r] &
\im(f') \ar@{ >->}[r]    &
Y'
}
\]
commutes.
\end{enumerate}
\qed
\end{lemma}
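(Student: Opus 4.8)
The plan is to treat part~\bref{part:fsb-orth} (orthogonality) as the one substantive claim, deduce the functoriality clause from it, and handle part~\bref{part:fsb-iso} and the two-out-of-three clause by short independent appeals to the uniqueness of factorizations. Throughout I would use only that $\cat{L}$ and $\cat{R}$ are closed under composition and contain all isomorphisms, and that every map has a factorization unique up to unique isomorphism.

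For part~\bref{part:fsb-iso}, if $f\from X \to Y$ lies in both $\cat{L}$ and $\cat{R}$ then $(f, \id_Y)$ and $(\id_X, f)$ are two factorizations of $f$, so the comparison isomorphism $k\from Y \to X$ satisfies $k\of f = \id_X$ and $f\of k = \id_Y$, making $f$ invertible. For the two-out-of-three clause, supposing $gf$ and $g$ to be embeddings, I would factor $f$ as $X \covby{e} \im(f) \embby{m} Y$; then $(e, g\of m)$ is the factorization of $gf$, since $g\of m \in \cat{R}$, while $(\id_X, gf)$ is another, since $gf \in \cat{R}$, so the comparison isomorphism forces $e$ to be invertible and hence $f = m\of e$ to be a composite of embeddings. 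The covering half is dual.

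For orthogonality, take a commutative square $m\of a = b\of p$ with $p\from W \cov Y$ in $\cat{L}$, $m\from X \emb Z$ in $\cat{R}$, $a\from W \to X$, $b\from Y \to Z$. To build a filler I would factor $a = s\of e$ and $b = s'\of e'$ (with $e, e' \in \cat{L}$ and $s, s' \in \cat{R}$), observe that $(e, m\of s)$ and $(e'\of p, s')$ are both factorizations of the common diagonal $m\of a = b\of p$, and take the comparison isomorphism $k$ with $k\of e = e'\of p$ and $s'\of k = m\of s$; then $d = s\of k^{-1}\of e'\from Y \to X$ is a filler, by a one-line check with these identities. The functoriality clause follows by instantiating this: given $w\of f = f'\of u$, factor $f = m_f\of e_f$ and $f' = m_{f'}\of e_{f'}$, and note that $(w\of m_f)\of e_f = m_{f'}\of(e_{f'}\of u)$ is a square with $e_f \in \cat{L}$ on the left and $m_{f'} \in \cat{R}$ on the right; its filler is the required $v\from \im(f) \to \im(f')$, making commute the two squares of the statement.

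I expect the uniqueness of the orthogonality filler to be the one real obstacle, since it is what makes the functoriality map $v$ well defined and it is the only place where the ``unique isomorphism'' clause does essential work. Given two fillers $d_1, d_2\from Y \to X$, I would factor $d_i = t_i\of f_i$ (with $f_i \in \cat{L}$, $t_i \in \cat{R}$); from $d_i\of p = a$ one sees that $(f_i\of p, t_i)$ factors $a$, and from $m\of d_i = b$ that $(f_i, m\of t_i)$ factors $b$. Comparing the two factorizations of $a$ gives an isomorphism $\beta$ with $\beta\of(f_1\of p) = f_2\of p$ and $t_2\of\beta = t_1$; comparing each $(f_i, m\of t_i)$ with a fixed factorization $b = s'\of e'$ gives isomorphisms $\alpha_i$ with $\alpha_i\of f_i = e'$ and $s'\of\alpha_i = m\of t_i$. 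The key step is then to verify that $\alpha_1$ and $\alpha_2\of\beta$ both serve as the comparison isomorphism between the factorizations $(f_1\of p, m\of t_1)$ and $(e'\of p, s')$ of the diagonal, whence $\alpha_1 = \alpha_2\of\beta$; feeding this into $d_i = t_i\of\alpha_i^{-1}\of e'$ gives $d_1 = d_2$. Everything else in the lemma is formal; the only genuine care needed is in tracking these comparison isomorphisms.
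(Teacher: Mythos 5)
Your proposal is correct, and since the paper explicitly omits these proofs (``standard elementary consequences''), it supplies exactly what is left out. Your route is the right one for the paper's Freyd--Kelly-style definition, in which only closure under composition, containment of isomorphisms, and uniqueness of factorization up to unique isomorphism are assumed: orthogonality must be \emph{derived}, and your construction of the filler via the two factorizations $(e,\ m\of s)$ and $(e'\of p,\ s')$ of the common diagonal, together with the uniqueness argument comparing $\alpha_1$ and $\alpha_2\of\beta$ as candidate comparison isomorphisms, is sound (I checked the identities; in particular $t_1\beta^{-1}=t_2$ closes the argument). The deductions of parts (i) and (ii) directly from uniqueness of factorizations, and of functoriality as an instance of the orthogonal lifting, are likewise correct.
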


Let $\ento{X}{f}$ be an endomorphism in a category with a factorization
system. Let $n, k \geq 0$. Functoriality applied to the squares
\[
\xymatrix{
X \ar[r]^{f^{n + k}} \ar[d]_{f^k}       &
X \ar[d]^1      \\
X \ar[r]_{f^n}  &
X
}
\qquad 
\xymatrix{
X \ar[r]^{f^n} \ar[d]_1 &
X \ar[d]^{f^k}  \\
X \ar[r]_{f^{n + k}}    &
X
}
\]
gives unique dotted maps such that the diagrams 
\begin{align}
\label{eq:in-out}
\begin{array}{c}
\xymatrix{
X \ar@{->>}[r] \ar[d]_{f^k} &
\im(f^{n + k}) \ar@{ >->}[r] \ar@{.>}[d] &
X \ar[d]^1      \\
X \ar@{->>}[r] &
\im(f^n) \ar@{ >->}[r]    &
X
}
\end{array}
\qquad
\begin{array}{c}
\xymatrix{
X \ar@{->>}[r] \ar[d]_1 &
\im(f^n) \ar@{ >->}[r] \ar@{.>}[d]       &
X \ar[d]^{f^k}      \\
X \ar@{->>}[r] &
\im(f^{n + k}) \ar@{ >->}[r]    &
X
}
\end{array}
\end{align}
commute. By the two out of three property, the first dotted map is an
embedding and the second is a covering. We leave them nameless, writing
them as simply
\begin{align}
\label{eq:emb-cov}
\im(f^{n + k}) \emb \im(f^n),
\qquad
\im(f^n) \cov \im(f^{n + k}).
\end{align}
The uniqueness in~\eqref{eq:in-out} implies that $\im(f^n) \emb \im(f^n)$
is the identity and that
\[
\xymatrix@R-2ex{
&
\im(f^{n + k}) \ar@{ >->}[rd]    &
\\
\im(f^{n + k + \ell}) \ar@{ >->}[ru] \ar@{ >->}[rr]       &
&
\im(f^n)
}
\]
commutes for all $n, k, \ell \geq 0$, and dually. The embeddings and
coverings~\eqref{eq:emb-cov} are compatible in the following sense.

\begin{lemma}
\label{lemma:emb-cov}
Let $\ento{X}{f}$ be an endomorphism in a category with a factorization
system. Then for all $n, k, \ell \geq 0$, the square
\begin{align}
\label{eq:emb-cov-sq}
\begin{array}{c}
\xymatrix{
\im(f^{n + k}) \ar@{->>}[r] \ar@{ >->}[d]        &
\im(f^{n + k + \ell}) \ar@{ >->}[d]      \\
\im(f^n) \ar@{->>}[r]   &
\im(f^{n + \ell})
}
\end{array}
\end{align}
commutes.
\end{lemma}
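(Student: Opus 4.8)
The plan is to reduce the commutativity of the square~\eqref{eq:emb-cov-sq} to the uniqueness clause in the factorization-system axiom, exactly as the diagrams in~\eqref{eq:in-out} were obtained. The key observation is that both composites in the square, when whiskered on the left by the covering $X \cov \im(f^{n+k})$ and on the right by the embedding $\im(f^{n+\ell}) \emb X$, give the \emph{same} map $X \to X$. I would therefore first identify that common composite: going down then across, the outer path $X \cov \im(f^{n+k}) \emb \im(f^n) \cov \im(f^{n+\ell}) \emb X$ equals $f^{n+\ell}$ (the covering $\im(f^{n+k}) \epic \im(f^n)$ pre-composed with $X \epic \im(f^{n+k})$ is $X \epic \im(f^n)$, and the covering $\im(f^n) \epic \im(f^{n+\ell})$ then post-composed with $\im(f^{n+\ell}) \monic X$ gives $f^{n+\ell}$ by the defining property of the covering/embedding pair for $f^{n+\ell}$); going across then down gives the same, since $X \cov \im(f^{n+k}) \cov \im(f^{n+k+\ell})$ is $X \cov \im(f^{n+k+\ell})$ and then $\im(f^{n+k+\ell}) \emb \im(f^{n+\ell}) \emb X$ is the embedding $\im(f^{n+k+\ell}) \emb X$ defining $f^{n+k+\ell} = f^{n+\ell} \of f^k$, wait---so both outer composites are $f^{n+\ell}$, using the compatibility identities for the $\im$'s stated just before the lemma.

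**The uniqueness step.**

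Now I invoke orthogonality (Lemma~\ref{lemma:fs-basic}\bref{part:fsb-orth}): consider the solid square
\[
\xymatrix{
X \ar[r]^-{f^{n+\ell}} \ar@{->>}[d]     &
X \ar@{ >->}[d]  \\
\im(f^{n+\ell})          &
X,
}
\]
where the left map is $X \cov \im(f^{n+k})$ followed by $\im(f^{n+k}) \cov \im(f^{n+k+\ell})$ (i.e.\ $X \cov \im(f^{n+k+\ell})$) -- hmm, but then the square isn't quite in the shape orthogonality wants. The cleaner route: both $\im(f^{n+k}) \cov \im(f^{n+k+\ell}) \emb \im(f^{n+\ell})$ and $\im(f^{n+k}) \emb \im(f^n) \cov \im(f^{n+\ell})$ are maps $\im(f^{n+k}) \to \im(f^{n+\ell})$, and I must show they agree. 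Precompose both with the covering $X \cov \im(f^{n+k})$ and postcompose both with the embedding $\im(f^{n+\ell}) \emb X$; by the computation above both become $f^{n+\ell} \from X \to X$. Since $X \cov \im(f^{n+k})$ is a covering and $\im(f^{n+\ell}) \emb X$ is an embedding, orthogonality says there is a \emph{unique} fill-in $\im(f^{n+k}) \to \im(f^{n+\ell})$ making the resulting square commute; hence the two maps coincide.

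**The main obstacle.**

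The only real care needed is bookkeeping: making sure the factorization $X \cov \im(f^{n+\ell}) \emb X$ of $f^{n+\ell}$ is the \emph{same} one appearing in both derived triangles, so that the uniqueness of orthogonal fill-ins actually applies. This is where the pre-lemma compatibility identities ($\im(f^{n}) \emb \im(f^n)$ is the identity, and the two ``telescope'' triangles for nested embeddings and for nested coverings) do the work: they guarantee that the long composites along the top-and-right and along the left-and-bottom of the expanded rectangle are literally equal as maps $X \to X$, not merely isomorphic. I expect no difficulty beyond this diagram-chase; once the common outer composite is pinned down, orthogonality closes the argument in one line.
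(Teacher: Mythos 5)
Your reduction has two problems, one computational and one structural. Computationally, the step ``$X \cov \im(f^{n+k})$ followed by $\im(f^{n+k}) \to \im(f^n)$ is $X \cov \im(f^n)$'' is false: by the left square of the first diagram in~\eqref{eq:in-out}, that composite is $f^k$ followed by $X \cov \im(f^n)$ (and the map $\im(f^{n+k}) \to \im(f^n)$ is an embedding, not a covering). When the whiskered outer paths are computed correctly, both equal $f^{n+k+\ell}$, not $f^{n+\ell}$.

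Structurally---and this is the genuine gap---the final ``one-line'' cancellation is not licensed by orthogonality. Write $e \from X \cov \im(f^{n+k})$, $m \from \im(f^{n+\ell}) \emb X$, and $a, b$ for the two composites around~\eqref{eq:emb-cov-sq}. What you have established is the single equation $m \of a \of e = m \of b \of e$. Orthogonality (Lemma~\ref{lemma:fs-basic}\bref{part:fsb-orth}) gives uniqueness of the diagonal filler of one \emph{fixed} commutative square; to conclude $a = b$ from it you must know that $a$ and $b$ fill the same square, i.e.\ that $a \of e = b \of e$ and $m \of a = m \of b$ hold separately. The single whiskered equality does not imply these, because in a general factorization system (the setting of the lemma) coverings need not be epimorphisms and embeddings need not be monomorphisms, so nothing lets you cancel $e$ or $m$ from a three-fold composite. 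The argument is repairable: the two partial whiskerings can be checked directly---$a \of e = b \of e = (X \cov \im(f^{n+\ell})) \of f^k$ using the left squares of~\eqref{eq:in-out}, and $m \of a = m \of b = f^\ell \of (\im(f^{n+k}) \emb X)$ using the right squares---but at that point you have reconstructed the paper's proof, which invokes functoriality to obtain a unique $v$ making~\eqref{eq:ec-1} commute (left square: behaviour after precomposition with $e$, against $f^k$; right square: behaviour after postcomposition with $m$, against $f^\ell$) and then verifies that each of $a$ and $b$ satisfies both conditions.
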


\begin{proof}
By functoriality, there is a unique map $v$ such that that the diagram
\begin{align}
\label{eq:ec-1}
\begin{array}{c}
\xymatrix{
X \ar@{->>}[r] \ar@/^1pc/[rr]^{f^{n + k}} \ar[d]_{f^k}      &
\im(f^{n + k}) \ar@{ >->}[r] \ar@{.>}[d]|*+<4pt>{\scriptstyle v}       &
X \ar[d]^{f^\ell}       \\
X \ar@{->>}[r] \ar@/_1pc/[rr]_{f^{n + \ell}}        &
\im(f^{n + \ell}) \ar@{ >->}[r]  &
X
}
\end{array}
\end{align}
commutes. It is therefore enough to show that taking $v$ to be either
composite around the square~\eqref{eq:emb-cov-sq} makes~\eqref{eq:ec-1}
commute. That the clockwise composite does so follows from the
commutativity of the diagram
\[
\xymatrix{
X \ar@{->>}[r] \ar@/^1pc/[rr]^{f^{n + k}} \ar[d]_1 \ar@/_2pc/[dd]_{f^k} &
\im(f^{n + k}) \ar@{ >->}[r] \ar@{->>}[d]        &
X \ar[d]^{f^\ell} \ar@/^2pc/[dd]^{f^\ell}       \\
X \ar@{->>}[r] \ar[d]_{f^k}     &
\im(f^{n + k + \ell}) \ar@{ >->}[r] \ar@{ >->}[d] &
X \ar[d]^1      \\
X \ar@{->>}[r] \ar@/_1pc/[rr]_{f^{n + \ell}}    &
\im(f^{n + \ell}) \ar@{ >->}[r]  &
X,
}
\]
and a similar argument applies to the anticlockwise composite.
\end{proof}

We now formulate conditions on a factorization system expressing the idea
that the objects of the category are in some sense finite. The three main
examples are as follows; details can be found in Sections
\ref{sec:set}--\ref{sec:met}. 

\begin{examples}
\label{egs:fs-basic}
\begin{enumerate}
\item 
Let $\FinSet$ be the category of finite sets, with the factorization system
in which embeddings are injections and coverings are surjections.

\item
Let $\FDVect$ be the category of finite-dimensional vector spaces over a
field $k$, again with the injective and surjective maps as the embeddings
and coverings.

\item
Let $\CptMet$ be the category of compact metric spaces and
distance-decreasing ($1$-Lipschitz) maps. It has a factorization system in
which the embeddings are the distance-preserving maps and the coverings are
the surjective maps.
\end{enumerate}
\end{examples}

\begin{defn}
A factorization system is of \demph{finite type} if it satisfies the
following three axioms:
\begin{itemize}
\item[\axI] 
every endomorphism that is an embedding is an isomorphism;

\item[\axII] 
every sequence $\ \cdots \emb \cdot \emb \cdot$ has a limit;

\item[\axIII] 
for every commutative diagram
\[
\xymatrix{ 
\cdots  \ar@{ >->}[r]                     &
X_1 \ar@{ >->}[r] \ar@{->>}[d]        &
X_0 \ar@{->>}[d]        \\
\cdots  \ar@{ >->}[r]     &
Y_1 \ar@{ >->}[r]     &
Y_0, 
}
\]
the induced map $\lim X_n \to \lim Y_n$ is a covering;
\end{itemize}
together with their duals:
\begin{itemize}
\item[\axIs] 
every endomorphism that is a covering is an isomorphism;

\item[\axIIs] 
every sequence $\cdot \cov \cdot \cov \cdots\ $ has a colimit;

\item[\axIIIs] 
for every commutative diagram
\[
\xymatrix{ 
Y_0 \ar@{->>}[r] \ar@{ >->}[d]           &
Y_1 \ar@{->>}[r] \ar@{ >->}[d]           &
\cdots                  \\
X_0 \ar@{->>}[r]        &
X_1 \ar@{->>}[r]        &
\cdots,                  
}
\]
the induced map $\colim Y_n \to \colim X_n$ is an embedding.
\end{itemize}
\end{defn}

All three of Examples~\ref{egs:fs-basic} are of finite type, as shown in
Sections~\ref{sec:set}--\ref{sec:met}.

\begin{remark}
If the coverings in axiom~\axIII\ are replaced by embeddings then the
induced map $\lim X_n \to \lim Y_n$ is automatically an embedding, by the
two out of three property and Lemma~\ref{lemma:proj-emb} below. Hence
axiom~\axIII\ is equivalent to the statement that factorizations are
preserved by sequential limits of embeddings.
\end{remark}

We set out some elementary consequences of the axioms.

\begin{lemma}
\label{lemma:split-mono-emb}
In a factorization system satisfying axioms~\axI\ and~\axIs:
\begin{enumerate}
\item
\label{part:sme-iso}
every split monic covering is an isomorphism, and every split epic
embedding is an isomorphism;

\item
\label{part:sme-main}
every split monic is an embedding, and every split epic is a covering.
\end{enumerate}
\end{lemma}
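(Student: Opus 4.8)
The plan is to deduce everything from the orthogonality property (Lemma~\ref{lemma:fs-basic}\bref{part:fsb-orth}) and the two axioms \axI, \axIs. First I would prove part~\bref{part:sme-iso}. Suppose $m \from X \to Y$ is a split monic covering, say $rm = 1_X$ for some $r \from Y \to X$. Then $mr \from Y \to Y$ is an idempotent, and I claim it is a covering: indeed $m$ is a covering by hypothesis and $r$ will be shown to be a covering too, but more directly, $mr \of m = m \of (rm) = m$, so by the two out of three property for coverings (Lemma~\ref{lemma:fs-basic}\bref{part:fsb-iso}... actually the "two out of three" clause), since $m$ and $m = (mr)\of m$ are coverings, $mr$ is a covering. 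By axiom~\axIs, $mr$ is an isomorphism; but an idempotent isomorphism is the identity, so $mr = 1_Y$, hence $m$ is an isomorphism. The statement for split epic embeddings is dual, using \axI.

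Next, part~\bref{part:sme-main}. Let $m \from X \to Y$ be split monic, with retraction $r$, $rm = 1_X$. Factor $m = (X \cov I \emb Y)$ as $X \toby{e} I \toby{i} Y$ with $e$ a covering and $i$ an embedding. Then $re \from$ wait---I want to show $m$ is itself an embedding, equivalently that the covering part $e$ is an isomorphism. Consider $r \of i \from I \to X$; then $(r i) \of e = r \of (ie) = r m = 1_X$, so $e$ is split monic (with retraction $ri$). Thus $e$ is a split monic covering, and by part~\bref{part:sme-iso} it is an isomorphism. Therefore $m = i \of e$ is an embedding (a composite of an embedding with an isomorphism, using that $\cat{R}$ contains all isomorphisms). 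The dual argument, factoring a split epic and using that its embedding part becomes a split epic embedding, shows every split epic is a covering.

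I don't anticipate a serious obstacle here; the only point requiring a little care is the bookkeeping with the two out of three property to see that the relevant idempotent $mr$ (resp.\ its dual) lies in the correct class before invoking \axIs\ (resp.\ \axI), and the trivial but essential observation that an idempotent which is also an isomorphism must be the identity. Part~\bref{part:sme-main} then follows formally from part~\bref{part:sme-iso} by splitting off the factorization and recognizing the covering part as split monic.
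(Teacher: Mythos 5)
Your proof is correct and follows essentially the same route as the paper: in part~\bref{part:sme-iso} both arguments use the two out of three property to place the idempotent $mr$ (the paper's $ip$) in the covering class and then invoke axiom~\axIs, differing only in the final bookkeeping (you use ``an idempotent isomorphism is the identity'', the paper notes the split monic is also epic); part~\bref{part:sme-main} is identical, factorizing the split monic and recognizing its covering part as a split monic covering.
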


\begin{proof}
By duality, it suffices to prove the first statement in each
part. For~\bref{part:sme-iso}, let $i \from Y \cov X$ be a split monic
covering, so that $pi = 1_Y$ for some $p \from X \to Y$. By the two out of
three property, $p$ is also a covering. Hence $ip \from X \to X$ is a
covering, which by axiom~\axIs\ implies that $ip$ is an isomorphism. So $i$
is epic as well as split monic, and is therefore an isomorphism.

For~\bref{part:sme-main}, let $i \from Y \to X$ be a split monic. Factorize
$i$ as
\[
i = \bigl( Y \covby{q} \im(i) \emb X \bigr).
\]
Then $q$ is also split monic. By~\bref{part:sme-iso}, $q$ is an
isomorphism, so $i$ is an embedding.
\end{proof}

\begin{example}
In a category with a factorization system satisfying~\axI\ and~\axIs,
Lemma~\ref{lemma:split-mono-emb}\bref{part:sme-main} implies that the
splitting object of a split idempotent $e$ is $\im(e)$.  So when $f$ is
an endomorphism with eventual image duality, $\ei{f} = \im(f^\infty)$.
\end{example}

\begin{lemma}
\label{lemma:proj-emb}
Let $\C$ be a category with a factorization system satisfying
axioms~\axI\ and~\axIs. Let
\begin{align}
\label{eq:pe}
\cdots \embby{f_1} X_1 \embby{f_0} X_0
\end{align}
be a diagram in $\C$ with a limit cone $\bigl(L \toby{j_n} X_n)_{n \in
\N}$. Then $j_n$ is an embedding for all $n \in \N$. The dual statement
holds for sequential colimits of coverings. 
\end{lemma}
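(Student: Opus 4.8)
The plan is to show that each limit projection $j_n \from L \to X_n$ is a split monic, and then invoke Lemma~\ref{lemma:split-mono-emb}\bref{part:sme-main} to conclude that $j_n$ is an embedding. The obvious candidate for a retraction is to use the structure maps of the diagram~\eqref{eq:pe} themselves: the maps $f_i \from X_{i+1} \emb X_i$ are all embeddings, hence monic (being in $\cat R$), so the whole diagram consists of monomorphisms.

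First I would construct a cone on~\eqref{eq:pe} with vertex $X_n$ that picks out the identity at stage $n$. For $m \leq n$, set the component $X_n \to X_m$ to be the composite $f_m f_{m+1} \cdots f_{n-1}$ of embeddings; for $m \geq n$ the component has to go the ``wrong way'' and there is no such structure map, so this naive attempt fails to define a cone over the whole of~\eqref{eq:pe}. The hard part is exactly this: there is no map $X_n \to X_m$ for $m > n$ in general, so $X_n$ does not obviously sit over the tail of the diagram, and we cannot directly produce a section of $j_n$.

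To get around this, I would argue instead that $j_n$ is monic and then split a factorization. Since each $f_i$ is monic, the whole diagram~\eqref{eq:pe} is a diagram of monomorphisms with a common ``eventual'' target, so one checks that the limit projection $j_n$ is monic: if $j_n a = j_n b$ for $a, b \from W \to L$, then composing with the embeddings $f_m \cdots f_{n-1}$ shows $j_m a = j_m b$ for all $m \leq n$, and composing the other way (using that $j_{m} = f_{m} \cdots f_{n-1}\, j_n$ for $m \geq n$ — wait, this is backwards; rather $j_n = f_n \cdots f_{m-1}\, j_m$ for $m \geq n$, and since $f_n \cdots f_{m-1}$ is monic, $j_n a = j_n b$ forces nothing new here) — the point is that $j_n a = j_n b$ together with $j_n = (\text{monic})\circ j_m$ for $m \le n$ gives $j_m a = j_m b$ for $m \leq n$, while for $m \geq n$ we have $j_n = (\text{composite of } f_i)\circ j_m$, so $j_n a = j_n b$ does not immediately give $j_m a = j_m b$; however $(f_n \cdots f_{m-1})$ monic lets us pull back... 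In any case, the cleanest route is: $j_n$ is monic because it factors through $j_0$ via the monic $f_0 \cdots f_{n-1}$ on one side, and on the tail side the cone condition expresses every $j_m$ ($m \geq n$) in terms of $j_n$, so two maps into $L$ agreeing after $j_n$ agree after every $j_m$, hence are equal by the limit's universal property; thus $j_n$ is monic.

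Now factorize $j_n = \bigl(L \covby{q} \im(j_n) \emb X_n\bigr)$. I claim $q$ is split monic, which by Lemma~\ref{lemma:split-mono-emb}\bref{part:sme-iso} makes it an isomorphism, so that $j_n$ is an embedding. To split $q$: the factorizations $j_m = (L \cov \im(j_m) \emb X_m)$ fit together, via Lemma~\ref{lemma:emb-cov}-style compatibility, into a cone over~\eqref{eq:pe} with vertex $\im(j_n)$ — using the orthogonality of coverings to embeddings (Lemma~\ref{lemma:fs-basic}\bref{part:fsb-orth}) to lift the structure maps $f_i$ against the coverings $L \cov \im(j_m)$ — and this cone induces a map $s \from \im(j_n) \to L$ with $q s = 1$ by the uniqueness in the universal property of $L$. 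Hence $q$ is split monic, thus iso, thus $j_n$ is an embedding. The dual statement follows by the dual argument, replacing embeddings by coverings, limits by colimits, and axiom~\axI\ by~\axIs. The step I expect to be the main obstacle is verifying that the factorizations of the $j_m$ assemble into a bona fide cone — i.e., that the lifted maps $\im(j_{m+1}) \to \im(j_m)$ are compatible with the $f_i$ and with each other — which is where orthogonality and the uniqueness clauses of the factorization system do the real work.
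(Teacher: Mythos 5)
Your final argument is essentially the paper's proof: factorize each $j_m$ as $L \covby{q_m} \im(j_m) \embby{k_m} X_m$, assemble the images into a cone on the diagram~\eqref{eq:pe}, use the universal property of $L$ to split the covering $q_n$, and conclude via Lemma~\ref{lemma:split-mono-emb}\bref{part:sme-iso} that $q_n$ is an isomorphism, so $j_n = k_n q_n$ is an embedding. Two points need tidying. First, the digression on monicity of $j_n$ is unnecessary (your closing argument never uses it) and its premise is unwarranted: for a factorization system in the sense of this paper, maps in $\cat{R}$ need not be monomorphisms, so ``hence monic (being in $\cat{R}$)'' is not justified --- fortunately nothing depends on it. Second, the splitting equation is written the wrong way round: what the uniqueness clause of the limit gives is $s q_n = 1_L$ (test against the projections: $j_m\, s\, q_n = j_m$ for all $m$), not $q_n s = 1$; and it is precisely $s q_n = 1_L$ that exhibits $q_n$ as a split monic covering, so the conclusion you then draw is the correct one. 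Finally, the step you flag as the main obstacle does close, by exactly the mechanism you suggest: the square with the covering $q_m \from L \cov \im(j_m)$ on one side and the embedding $f_m k_{m+1} \from \im(j_{m+1}) \emb X_m$ on the other commutes (both composites equal $j_m$), so orthogonality --- equivalently, the unique comparison isomorphism between the two factorizations $L \cov \im(j_m) \emb X_m$ and $L \cov \im(j_{m+1}) \emb X_{m+1} \emb X_m$ of $j_m$, which is how the paper packages it --- yields compatible maps $\im(j_m) \to \im(j_{m+1})$ over $L$; together with $k_n$ and the $f_i$ these give the required cone with vertex $\im(j_n)$, and the identities from the lifting triangles are what make the verification $j_m\, s\, q_n = j_m$ go through.
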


\begin{proof}
First we show that $j_0, j_1, \ldots$ all have the same image. 
For each $n \in \N$, factorize $j_n$ as 
\[
j_n =
\bigl( L \covby{q_n} \im(j_n) \embby{k_n} X_n \bigr).  
\]
By the cone property, 
\[
j_n = f_n j_{n + 1} = 
\bigl(\!\!
\xymatrix{
L \ar@{->>}[r]^-{q_{n + 1}} &
\im(j_{n + 1}) \ar@{ >->}[r]^-{k_{n + 1}} &
X_{n + 1} \ar@{ >->}[r]^-{f_n} &
X_n
}
\!\!\bigr).
\]
By uniqueness of factorizations, there is a unique isomorphism $\im(j_n)
\toby{\sim} \im(j_{n + 1})$ compatible with these two factorizations of
$j_n$. Put $I = \im(j_0)$ and $q = q_0 \from L \cov I$, and let $j'_n \from
I \emb X_n$ be the composite
\[
I = \im(j_0) \toby{\sim} \cdots \toby{\sim} \im(j_n) \embby{k_n} X_n.
\]
One easily checks that for all $n \in \N$, 
\[
j_n = \Bigl( L \covby{q} I \embby{j'_n} X_n \Bigr),
\qquad
j'_n = \Bigl( I \embby{j'_{n + 1}} X_{n + 1} \embby{f_n} X_n \Bigr).
\]
Hence $\bigl( I \embby{j'_n} X_n \bigr)_{n \in \N}$ is a cone on the
diagram~\eqref{eq:pe}, so there is a unique map $r \from I \to L$ such that
\[
j'_n = \bigl( I \toby{r} L \toby{j_n} X_n \bigr)
\]
for all $n \in \N$. For each $n \in \N$ we have $j_n r q = j'_n q = j_n$,
and it follows from the limit property of $L$ that $rq = 1_L$.  So $q$ is a
split monic covering, hence an isomorphism by
Lemma~\ref{lemma:split-mono-emb}\bref{part:sme-iso}. But $j_n = j'_n q$, so
$j_n$ is an embedding.
\end{proof}

We can now prove the main theorem. 

\begin{thm}
\label{thm:main}
A category admitting a factorization system of finite type has eventual
image duality. Moreover, in such a category: 
\begin{enumerate}
\item
\label{part:main-lim}
the eventual image $\ei{f}$ of an endomorphism $\ento{X}{f}$ is the limit
of the diagram
\[
\cdots \emb \im(f^2) \emb \im(f) \emb X,
\]
the map $\iota_f \from \ei{f} \to X$ is the $0$th projection of the limit
cone, and $\ento{\ei{f}}{\au{f}}$ is the map on limits induced by
the map of diagrams
\begin{align}
\label{eq:main-ladder}
\begin{array}{c}
\xymatrix{
\cdots \ar@{ >->}[r]     &
\im(f^2) \ar@{ >->}[r] \ar@{->>}[d]      &
\im(f)   \ar@{ >->}[r] \ar@{->>}[d]      &
X         \ar@{->>}[d]   &
\\
\cdots \ar@{ >->}[r]     &
\im(f^3) \ar@{ >->}[r]   &
\im(f^2) \ar@{ >->}[r]   &
\im(f) ;
}
\end{array}
\end{align}

\item
\label{part:main-colim}
dually,
the eventual image $\ei{f}$ of $\ento{X}{f}$ is the colimit of the diagram
\[
X \cov \im(f) \cov \im(f^2) \cov \cdots,
\]
the map $\pi_f \from X \to \ei{f}$ is the $0$th coprojection of the
colimit cone, and $\ento{\ei{f}}{\au{f}}$ is the map on colimits
induced by the map of diagrams
\begin{align}
\label{eq:dual-ladder}
\begin{array}{c}
\xymatrix{
\im(f) \ar@{->>}[r] \ar@{ >->}[d]       &
\im(f^2) \ar@{->>}[r] \ar@{ >->}[d]     &
\im(f^3) \ar@{->>}[r] \ar@{ >->}[d]     &
\cdots  \\
X        \ar@{->>}[r]   &
\im(f)   \ar@{->>}[r]   &
\im(f^2) \ar@{->>}[r]   &
\cdots  .
}
\end{array}
\end{align}
\end{enumerate}
\end{thm}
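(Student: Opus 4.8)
The plan is to prove that the limit $L$ of the diagram $\cdots \emb \im(f^2) \emb \im(f) \emb X$ exists (by axiom \axII) and comes equipped with a canonical endomorphism that is invertible, then to identify $L$ with both the limit and the colimit of the double sequence \eqref{eq:dbl-seq}, with the canonical map between them an isomorphism. By duality, the colimit statement in part~\bref{part:main-colim} will follow from the limit statement in part~\bref{part:main-lim}, so I would concentrate on the latter.

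First I would use axiom \axII\ to get a limit cone $\bigl( L \toby{j_n} \im(f^n) \bigr)_{n \geq 0}$, and invoke Lemma~\ref{lemma:proj-emb} (which needs only \axI\ and \axIs) to conclude each $j_n$ is an embedding. Next, the ladder \eqref{eq:main-ladder}, whose squares commute by Lemma~\ref{lemma:emb-cov}, induces a map $a \from L \to L'$, where $L'$ is the limit of the bottom row $\cdots \emb \im(f^3) \emb \im(f^2) \emb \im(f)$. But the bottom row is cofinal in (indeed, a subdiagram of) the top row, so the evident comparison $L' \to L$ is an isomorphism; composing, I get an endomorphism $\au{f} \from L \to L$. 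By axiom~\axIII\ the map $a$ is a covering, and $L' \to L$ is an embedding (again by Lemma~\ref{lemma:proj-emb} plus two-out-of-three, as in the Remark), so $\au{f}$ is an isomorphism by Lemma~\ref{lemma:fs-basic}\bref{part:fsb-iso}. This is the step I expect to be the crux: squeezing invertibility of $\au{f}$ out of \axIII\ by recognizing $\au{f}$ as (covering)$\circ$(iso)$\circ$(embedding).

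With $\au{f}$ an automorphism on $L$, I would build the limit cone on \eqref{eq:dbl-seq}: define $\pr_n = \bigl( L \toby{\au{f}^{\,n}} L \toby{j_0} X \bigr)$ for $n \in \Z$, where $j_0 \from L \emb X$ is the $0$th projection (here I use Lemma~\ref{lemma:pr}-style bookkeeping, noting that $j_0 = f^n \of j_n$ realizes $\au{f}$ concretely as a shift). Checking this is a limit cone on the $\Z$-indexed diagram is a cofinality argument: a cone on \eqref{eq:dbl-seq} restricts to a cone on the $\N$-indexed tail $\cdots \toby f X \toby f X$, and the latter factors through $\cdots \emb \im(f^2) \emb \im(f) \emb X$ because any map into $X$ that factors through arbitrarily high powers of $f$ factors through each $\im(f^n)$; conversely $\au{f}$ being invertible lets one extend uniquely to all negative indices. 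Dually, $L$ is the colimit $M$ of \eqref{eq:dbl-seq}, and one verifies the canonical map $L \to M$ is the identity on the common object, hence an isomorphism. Finally, unwinding the definitions shows $\iota_f = \pr_0 = j_0$ is the $0$th projection of the limit cone on $\cdots \emb \im(f^2) \emb \im(f) \emb X$, and the $\au{f}$ just constructed agrees with the abstract $\au{f}$ of Definition~\ref{defn:ei} by the uniqueness in the universal property, which completes part~\bref{part:main-lim}; part~\bref{part:main-colim} is the formal dual.
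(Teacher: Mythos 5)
The first half of your plan coincides with the paper's proof: take the limit cone $\bigl(L \embby{j_n} \im(f^n)\bigr)$ using \axII, apply Lemma~\ref{lemma:proj-emb}, and extract an endomorphism $\hat{f}$ of $L$ from the ladder~\eqref{eq:main-ladder}, whose invertibility is the crux. One repair is needed there: your citation of Lemma~\ref{lemma:fs-basic}\bref{part:fsb-iso} does not justify invertibility, since that lemma requires the map to be \emph{both} a covering and an embedding, and your composite $(L' \toby{\iso} L)\of a$ is only known to be a covering (a covering followed by an isomorphism). The correct inference, and the one the paper makes, is that a covering endomorphism of $L$ is an isomorphism by axiom \axIs. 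This is a small misattribution, easily fixed, and your subsequent construction of the $\Z$-indexed cone $\pr_n = j_0 \of \hat{f}^{\,n}$ and the sketch that it is a limit cone are in the same spirit as the paper's (the paper carries out the existence/uniqueness check in detail via the auxiliary maps $\bar{s}_n$).

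The genuine gap is your final step. Duality applied to your part~\bref{part:main-lim} argument shows that the \emph{colimit} of~\eqref{eq:dbl-seq} is $M$, the colimit of $X \cov \im(f) \cov \im(f^2) \cov \cdots$ with coprojections $k_n$ --- an a priori \emph{different} object from $L$. It does not show that ``$L$ is the colimit'', and there is no ``common object'' on which the canonical map could be the identity: proving that the canonical map $L \to M$, namely $k_0 \of j_0$, is an isomorphism is exactly the eventual image duality asserted in the theorem, and it is the non-trivial limit--colimit coincidence that the axioms are designed to deliver (Example~\ref{eg:not-abs} shows such coincidences are not automatic). The paper closes this gap with a separate argument that your proposal omits entirely: the ladder whose rows are the isomorphisms $L \toby{\hat{f}} L \toby{\hat{f}} \cdots$ sitting over $X \cov \im(f) \cov \cdots$ induces on colimits a map $\phi \from L \to M$ which one computes to be $k_0 j_0$; axiom \axIIIs\ makes $\phi$ an embedding, the dual argument makes $k_0 j_0$ a covering, and only then does Lemma~\ref{lemma:fs-basic}\bref{part:fsb-iso} give the isomorphism. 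Without some argument of this kind, your proof establishes the two one-sided descriptions of the limit and colimit but not the duality statement itself.
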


The commutativity of diagrams~\eqref{eq:main-ladder}
and~\eqref{eq:dual-ladder} follows from Lemma~\ref{lemma:emb-cov}.

\begin{proof}
Let $\C$ be a category with a factorization system of finite type, and let
$\ento{X}{f}$ in $\C$. The diagram
\begin{align}
\label{eq:main-im-diag}
\cdots \emb \im(f^2) \emb \im(f) \emb \im(f^0) = X
\end{align}
has a limit cone $\bigl( L \embby{j_n} \im(f^n) \bigr)_{n \in \Z}$, where
$j_n$ is an embedding by Lemma~\ref{lemma:proj-emb}. Dually, 
\[
X = \im(f^0) \cov \im(f) \cov \im(f^2) \cov \cdots
\]
has a colimit cone $\bigl( \im(f^n) \covby{k_n} M \bigr)_{n \in \Z}$. 

We will show that $L$ is also a limit of $\cdots \toby{f} X \toby{f}
\cdots$. It will follow by duality that $M$ is its colimit, and we then
show that the canonical map $L \to M$ is an isomorphism.

First we construct an automorphism of $L$. Taking limits in
diagram~\eqref{eq:main-ladder}, there is a unique map $\hat{f} \from L \to
L$ such that
\begin{align}
\label{eq:main-auto}
\begin{array}{c}
\xymatrix{
L \ar@{ >->}[r]^-{j_n} \ar[d]_{\hat{f}} &
\im(f^n) \ar@{->>}[d]   \\
L \ar@{ >->}[r]_-{j_{n+1}}      &
\im(f^{n + 1})
}
\end{array}
\end{align}
commutes for all $n \in \N$. By axiom~\axIIIs, $\hat{f}$ is a covering,
which by axiom~\axIs\ implies that $\hat{f}$ is an automorphism of $L$.

Next observe that the family of maps
\begin{align}
\label{eq:main-cone}
\Bigl( L \toby{\hat{f}^n} L \toby{j_0} X \Bigr)_{n \in \Z}
\end{align}
is a cone on $\cdots \toby{f} X \toby{f} \cdots$, since the diagram
\[
\xymatrix@R-2ex@C-2ex{
&
&
L \ar[ld]_{\hat{f}^n} \ar[rd]^{\hat{f}^{n + 1}} &
&
\\
&
L \ar[rr]^{\hat{f}} \ar@{ >->}[ldd]_{j_0}        &
&
L \ar@{ >->}[ld]^{j_1} \ar@{ >->}[rdd]^{j_0}      &
\\
&
&
\im(f) \ar@{ >->}[rrd]   &       
&
\\
X \ar@{->>}[rru] \ar[rrrr]_f    &
&
&
&
X
}
\]
commutes for each $n \in \Z$. 

We will prove that~\eqref{eq:main-cone} is a limit cone. Take
an arbitrary cone $\bigl( A \toby{s_n} X \bigr)_{n \in \Z}$ on
$\cdots \toby{f} X \toby{f} \cdots$. We must show there is a unique
map $\bar{s} \from A \to L$ such that the diagram
\begin{align}
\label{eq:main-lim-comm}
\begin{array}{c}
\xymatrix{
A \ar[rrd]^{s_n} \ar[d]_{\bar{s}}       &
&
\\
L \ar[r]_{\hat{f}^n}    &
L \ar@{ >->}[r]_{j_0}   &
X
}
\end{array}
\end{align}
commutes for all $n \in \Z$. 

For uniqueness, take such a map $\bar{s}$. Then for all $n \in \N$, the
diagram
\[
\xymatrix{
A \ar[rrd]^{s_{-n}} \ar[d]_{\bar{s}}    &
&
\\
L \ar[r]^{\hat{f}^{-n}} \ar[rd]_1       &
L \ar@{ >->}[r]_{j_0} \ar[d]^{\hat{f}^n}        &
X \ar@{->>}[d]  \\
&
L \ar@{ >->}[r]_-{j_n}   &
\im(f^n)
}
\]
commutes, the inner square by diagram~\eqref{eq:main-auto} and
induction. Since the cone $\bigl( L \embby{j_n} \im(f^n)\bigr)_{n \in \N}$
is a limit, this property determines $\bar{s}$ uniquely.

For existence, consider the diagram
\[
\xymatrix{
A \ar@/^/[rrrd]|-*+<3pt>{\scriptstyle s_0} 
\ar@/^/[rrd]|-*+<2pt>{\scriptstyle s_{-1}} 
\ar@/^/[rd]|-*+<2pt>{\scriptstyle s_{-2}}     
&
&
&
\\
\cdots \ar[r]^f &
X \ar[r]^f \ar@{->>}[d] &
X \ar[r]^(0.4)f \ar@{->>}[d] &
X \ar@{=}[d]    \\
\cdots \ar@{ >->}[r]    &
\im(f^2) \ar@{ >->}[r]  &
\im(f) \ar@{ >->}[r]    &
X.
}
\]
The upper part commutes by definition of cone, and the lower part commutes
by the leftmost square of diagrams~\eqref{eq:in-out} in the case $k =
1$. Hence
\[
\bigl( A \toby{s_{-n}} X \cov \im(f^n) \bigr)_{n \in \N}
\]
is a cone on~\eqref{eq:main-im-diag}. There is, therefore, a unique map
$\bar{s} \from A \to L$ such that
\begin{align}
\label{eq:med-sq}
\begin{array}{c}
\xymatrix{
A \ar[r]^{s_{-n}} \ar[d]_{\bar{s}}      &
X \ar@{->>}[d]  \\
L \ar@{ >->}[r]_-{j_n}  &
\im(f^n) 
}
\end{array}
\end{align}
commutes for all $n \in \N$. Our task is to show that
diagram~\eqref{eq:main-lim-comm} commutes for all $n \in \Z$. Now for each
$n \in \Z$, there is a cone
\[
\xymatrix{
A \ar@/^/[rrrd]|-*+<3pt>{\scriptstyle s_n} 
\ar@/^/[rrd]|-*+<2pt>{\scriptstyle s_{n-1}} 
\ar@/^/[rd]|-*+<2pt>{\scriptstyle s_{n-2}}   &
&
&
\\
\cdots \ar[r]^f &
X \ar[r]^f \ar@{->>}[d] &
X \ar[r]^(0.4)f \ar@{->>}[d] &
X \ar@{=}[d]    \\
\cdots \ar@{ >->}[r]    &
\im(f^2) \ar@{ >->}[r]  &
\im(f) \ar@{ >->}[r]    &
X
}
\]
on $\cdots \emb \im(f) \emb X$, so there is a unique map $\bar{s}_n \from A
\to L$ such that for all $m \geq 0$,
\begin{align}
\label{eq:main-shift-sq}
\begin{array}{c}
\xymatrix{
A \ar[r]^{s_{n - m}} \ar[d]_{\bar{s}_n}         &
X \ar@{->>}[d]  \\
L \ar@{ >->}[r]_-{j_m}  &
\im(f^m)
}
\end{array}
\end{align}
commutes. In particular, $\bar{s}_0 = \bar{s}$. 

I claim that $\bar{s}_{n + 1} = \hat{f} \of \bar{s}_n$ for all $n \in
\Z$. By the limit property of $L$, it is enough to prove that for each $m
\geq 1$, the outside of the diagram
\[
\xymatrix@R-2ex@C-2ex{
A \ar[rr]^{\bar{s}_{n + 1}} \ar[dd]_{\bar{s}_n} \ar[rd]^{s_{n - m + 1}} &
&
L \ar@{ >->}[ddd]^{j_m} \\
&
X \ar@{->>}[rdd] \ar@{->>}[d]   &
\\
L \ar@{ >->}[r]_-{j_{m - 1}} \ar[d]_{\hat{f}}       &
\im(f^{m - 1}) \ar@{->>}[rd]    &
\\
L \ar@{ >->}[rr]_{j_m} &
&
\im(f^m)
}
\]
commutes. The inner polygons commute, the squares being cases
of~\eqref{eq:main-shift-sq} and~\eqref{eq:main-auto}, so the claim is
proved. 

It follows that for all $n \in \Z$, the left-hand triangle of
\[
\xymatrix{
A \ar[rrd]^{s_n} 
\ar[rd]|*+<3pt>{\scriptstyle\bar{s}_n} 
\ar[d]_{\bar{s}}   &
&
\\
L \ar[r]_{\hat{f}^n}    &
L \ar@{ >->}[r]_{j_0}   &
X
}
\]
commutes. The right-hand triangle also commutes, being the case $m = 0$
of diagram~\eqref{eq:main-shift-sq}. Hence the outside, which
is diagram~\eqref{eq:main-lim-comm}, commutes. This completes the proof
that $\bigl( L \toby{\hat{f}^n} L \embby{j_0} X \bigr)_{n \in \Z}$ is a
limit cone on $\cdots \toby{f} X \toby{f} \cdots$. 

Dually, $\bigl( X \covby{k_0} M \toby{\hat{f}^n} M \bigr)_{n \in \Z}$ is a
colimit cone on the same diagram.

Next we show that the composite $L \embby{j_0} X \covby{k_0} M$ is
an isomorphism. By~\eqref{eq:main-auto}, the diagram
\[
\xymatrix{
L \ar[r]^{\hat{f}}_\sim \ar@{ >->}[d]_{j_0}     &
L \ar[r]^{\hat{f}}_\sim \ar@{ >->}[d]_{j_1}     &
L \ar[r]^{\hat{f}}_\sim \ar@{ >->}[d]_{j_2}     &
\cdots  \\
X \ar@{->>}[r]  &
\im(f) \ar@{->>}[r]     &
\im(f^2) \ar@{->>}[r]   &
\cdots,
}
\]
commutes. The top row has colimit $L$
with $n$th coprojection $\hat{f}^{-n}$; the bottom row has colimit
$M$. Write $\phi \from L \to M$ for the induced map, which is unique such
that 
\[
\xymatrix{
L \ar[r]^{\hat{f}^{-n}} \ar@{ >->}[d]_{j_n}     &
L \ar[d]^\phi   \\
\im(f^n) \ar@{->>}[r]_-{k_n}    &
M
}
\]
commutes for all $n \in \N$. In particular, it commutes for $n = 0$, so
$\phi = k_0 j_0$. But by axiom~\axIIIs, $\phi$ is an embedding, so $k_0
j_0$ is an embedding. By duality, $k_0 j_0$ is also a covering. Hence $k_0
j_0$ is an isomorphism, as claimed.

We have shown that $\ento{X}{f}$ has eventual image duality and that
$\ei{f}$ can be constructed as either the limit $L$ of $\cdots \emb \im(f)
\emb X$ (with $\iota_f$ as the $0$th projection $j_0$) or the colimit $M$
of $X \cov \im(f) \cov \cdots$ (with $\pi_f = k_0$).
It only remains to prove that the map $\hat{f}$ induced on limits by the map
of diagrams~\eqref{eq:main-ladder} is $\au{f}$; the dual statement on
colimits will follow by duality. For this, we must prove that the outside of
the square
\[
\xymatrix{
L \ar[r]^{\pr_n} 
\ar[rd]|*+<3pt>{\scriptstyle\pr_{n + 1}} 
\ar[d]_{\hat{f}} &
X \ar[d]^f      \\
L \ar[r]_{\pr_n}        &
X
}
\]
commutes for each $n \in \Z$, where $\pr_n = j_0 \of \hat{f}^n$ is the
$n$th projection of the limit cone just constructed. The lower triangle
commutes by definition of $\pr_n$, and the upper triangle since
$(\pr_m)_{m \in \Z}$ is a cone. This completes the proof.
\end{proof}

\section{The eventual image is a terminal coalgebra}
\label{sec:coalg}

In our three main example categories, the eventual image of an endomorphism
$\ento{X}{f}$ is the largest subspace $A$ of $X$
satisfying $A \sub fA$. Here, we generalize this statement to categories
with a factorization system of finite type.

The general result will be expressed in terms of terminal coalgebras.
Recall that given an endofunctor $T$ of a category $\cat{A}$, a
\demph{$T$-coalgebra} is a pair $(A, \alpha)$ with $A \in \cat{A}$ and
$\alpha\from A \to TA$. With the obvious maps, $T$-coalgebras form a
category. The \emph{terminal} $T$-coalgebra, if it exists, plays an
important role, and Lambek showed that it is a fixed point:
its structure map $\alpha$ is an isomorphism (Lemma~2.2 of~\cite{LambFTC}).

Coalgebras in this sense arise in many situations in mathematics and
computer science, typically involving infinite iteration or coinduction. To
give just two examples, bisimulation in the context of Milner's concurrency
theory can be described in terms of coalgebras~\cite{AcMe}, and weak
$\infty$-categories can be defined using terminal
coalgebras~\cite{WICVTC}. See Ad\'amek~\cite{AdamIC} and Rutten~\cite{Rutt}
for surveys.

Let $\C$ be a category with a factorization system. Let $X \in \C$. The
slice category $\C/X$ has a full subcategory $\Emb(X)$ consisting of the
embeddings into $X$. A map from $A \embby{j} X$ to $B \embby{k}
X$ in $\Emb(X)$ is, then, a map $u \from A \to B$ in $\C$ such that
\begin{align}
\label{eq:slice-map}
\begin{array}{c}
\xymatrix@R-2.5ex@C-2ex{
A \ar[rr]^u \ar@{ >->}[rd]_j    &
&
B \ar@{ >->}[ld]^k      \\
&
X
}
\end{array}
\end{align}
commutes, and the two out of three property implies that $u$ is also an
embedding. 

Given also an endomorphism $f$ of $X$, there is an endofunctor $f_!$ of
$\Emb(X)$ defined as follows. For an object $A \embby{j} X$, take the
image factorization
\[
\xymatrix{
A \ar@{ >->}[r]^j \ar@{->>}[d]  &
X \ar[d]^f      \\
fA \ar@{ >->}[r]_{j^\#}   &
X
}
\]
of $fj$ (where $fA$ is alternative notation for $\im(fj)$) and define
\[
f_!\Bigl(A \embby{j} X \Bigr) = \Bigl( f A \embby{j^\#} X\Bigr).
\]
For a
map~\eqref{eq:slice-map} in $\Emb(X)$, the solid part of the diagram
\[
\xymatrix@R-2ex@C-2ex{
A \ar@{ >->}[rr]^u \ar@{ >->}[rd]_j \ar@{->>}[dd]       &
&
B \ar@{ >->}[ld]^k \ar@{->>}[dd]        \\
&
X \ar[dd]^(0.3)f        &
\\
fA \ar@{.>}'[r]^(0.6){f_!(u)}[rr] \ar@{ >->}[rd]_{j^\#}   &
&
fB \ar@{ >->}[ld]^{k^\#}  \\
&
X       &
}
\]
commutes, so by orthogonality
(Lemma~\ref{lemma:fs-basic}\bref{part:fsb-orth}), there is a unique map
$f_!(u) \from fA \to fB$ making the diagram commute. Then $f_!(u)$ is a map
from $f_!\bigl(A \embby{j} X\bigr)$ to $f_!\bigl(B \embby{k} X\bigr)$ in
$\Emb(X)$. This defines an endofunctor $f_!$ of $\Emb(X)$.

\begin{example}
\label{eg:ei-coalg}
Let $\ento{X}{f}$ be an endomorphism in $\C$ with eventual image duality. The
diagram 
\[
\xymatrix{
\ei{f} \ar@{ >->}[r]^-{\iota_f} \ar[d]_{\au{f}}^{\iso}  &
X \ar[d]^f      \\
\ei{f} \ar@{ >->}[r]_-{\iota_f} &
X 
}
\]
commutes by definition of $\au{f}$, so $f_!$ fixes the object $\ei{f}
\embby{\iota_f} X$ of $\Emb(X)$. Together with the identity, this object
is a coalgebra for $f_!$, which we call just $\ei{f}$.
\end{example}

We prove that $\ei{f}$ is the terminal $f_!$-coalgebra using a standard
result generally attributed to Ad\'amek~\cite{AdamFAA}; see also
\cite{AdamIC}, Corollary~3.18. 

\begin{thm}[Ad\'amek]
Let $T$ be an endofunctor of a category $\cat{A}$. Suppose that $\cat{A}$
has a terminal object $1$, that the diagram
\begin{align}
\label{eq:ad-seq}
\cdots \toby{T^2!} T^2 1 \toby{T!} T1 \toby{!} 1
\end{align}
has a limit $\bigl( L \toby{j_n} T^n 1\bigr)_{n \in \N}$ in $\cat{A}$
(where $!$ is the unique map $T1 \to 1$), and that this limit is preserved
by $T$. Write $\lambda$ for the canonical isomorphism $TL \to L$. Then $(L,
\lambda^{-1})$ is the terminal $T$-coalgebra.
\end{thm}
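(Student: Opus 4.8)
The plan is to pin down the canonical isomorphism $\lambda$ concretely and then verify the universal property of the terminal $T$-coalgebra directly. First I would identify $\lambda$. Since $T$ preserves the limit of~\eqref{eq:ad-seq}, the family $\bigl(TL \toby{Tj_n} T^{n+1}1\bigr)_{n \in \N}$ is a limit cone on the chain $\cdots \toby{T^2!} T^2 1 \toby{T!} T1$ obtained by dropping the terminal-object stage from~\eqref{eq:ad-seq}; but dropping that stage does not change the limit, since a cone on the shorter chain extends uniquely to~\eqref{eq:ad-seq} (the component at $1$ being forced by terminality). Hence $TL$ is also a limit of~\eqref{eq:ad-seq}, via the cone whose component at $n \geq 1$ is $Tj_{n-1}$ and whose component at $0$ is the unique map $TL \to 1$. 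The comparison map into $L$ is then the unique $\lambda \from TL \to L$ with $j_0 \of \lambda = \,!$ and $j_n \of \lambda = Tj_{n-1}$ for all $n \geq 1$. Symmetrically, $\bigl(j_{n+1}\bigr)_{n \in \N}$ is a cone on $\cdots \toby{T^2!} T^2 1 \toby{T!} T1$ with apex $L$, inducing a map $L \to TL$; a two-line check on projections shows it is inverse to $\lambda$, so $\lambda$ is indeed an isomorphism. Write $\mu = \lambda^{-1}$, the defining relation being $Tj_n \of \mu = j_{n+1}$ for all $n$.

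Next, given any $T$-coalgebra $(A, \alpha \from A \to TA)$, I would produce the candidate coalgebra map $h \from (A,\alpha) \to (L,\mu)$ as the comparison map of a suitable cone. Define $h_0 = \,! \from A \to 1$ and $h_{n+1} = \bigl(A \toby{\alpha} TA \toby{Th_n} T^{n+1}1\bigr)$. An induction on $n$ shows $(h_n)_{n \in \N}$ is a cone on~\eqref{eq:ad-seq}: the base case $T^0! \of h_1 = h_0$ is forced by terminality of $1$, and the inductive step follows by applying $T$ to the preceding compatibility equation and precomposing with $\alpha$. The limit property of $L$ gives a unique $h \from A \to L$ with $j_n \of h = h_n$ for all $n$.

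Then I would verify that $h$ is a coalgebra morphism, i.e. $\mu \of h = Th \of \alpha$. Since $\bigl(TL \toby{Tj_n} T^{n+1}1\bigr)_{n \in \N}$ is a limit cone, the maps $Tj_n$ are jointly monic, so it suffices to show $Tj_n \of \mu \of h = Tj_n \of Th \of \alpha$ for every $n$. Using $Tj_n \of \mu = j_{n+1}$, the left side is $j_{n+1} \of h = h_{n+1}$, while the right side is $T(j_n \of h) \of \alpha = Th_n \of \alpha = h_{n+1}$. For uniqueness, if $g \from (A,\alpha) \to (L,\mu)$ is any coalgebra morphism then $g = \lambda \of Tg \of \alpha$, and one proves $j_n \of g = h_n$ for all $n$ by induction — the base case by terminality, the inductive step from $j_{n+1} \of \lambda = Tj_n$ and the hypothesis $j_n \of g = h_n$ — whence $g = h$.

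I do not expect a genuine obstacle: the argument is a short induction plus one appeal to joint monicity of a limit cone. The one place that needs care is the bookkeeping around the index shift — correctly identifying $\lambda$ via the fact that truncating~\eqref{eq:ad-seq} leaves its limit unchanged, nailing down the relations $j_0 \of \lambda = \,!$ and $j_n \of \lambda = Tj_{n-1}$ for $n \geq 1$, and consistently handling the $n = 0$ stage by terminality rather than by a connecting map of the chain. As a sanity check, the resulting structure map $\mu = \lambda^{-1}$ is an isomorphism, in line with Lambek's lemma.
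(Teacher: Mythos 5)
Your proof is correct. The paper does not actually prove this theorem --- it is quoted as Ad\'amek's result with a citation, the only detail supplied being the characterization of $\lambda$ as the unique map with $j_{n+1} \of \lambda = T(j_n)$, which agrees with yours --- and your argument (the cone $h_0 = {!}$, $h_{n+1} = Th_n \of \alpha$, the induced comparison map, joint monicity of $(Tj_n)$ for the coalgebra-morphism equation, and induction on projections for uniqueness) is precisely the standard proof of the cited result.
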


Here $\lambda$ is the unique map $TL \to L$ such that
\[
\xymatrix{
TL \ar[r]^\lambda \ar@/_/[rrd]_{T(j_n)}     &
L \ar[rd]^{j_{n + 1}}   &       \\
        &       &T^{n + 1}1
}
\]
commutes for all $n \in \N$, which is an isomorphism since $T$ preserves
the limit. 

To apply Ad\'amek's theorem, we use the following observation.

\begin{lemma}
\label{lemma:creation}
Let $\C$ be a category with a factorization system of finite type, and let
$X \in \C$. Then $\Emb(X)$ has, and the forgetful functor $\Emb(X) \to \C$
creates, sequential limits.
\end{lemma}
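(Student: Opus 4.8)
The plan is to prove two things: that $\Emb(X)$ has sequential limits of diagrams $\cdots \embby{} A_1 \embby{} A_0$, and that the forgetful functor $V \from \Emb(X) \to \C$ creates them (from which the existence statement also follows, once we know $\C$ has such limits, which it does by axiom \axII). Since creation of limits implies their existence given that they exist downstairs, the real content is the creation statement.

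First I would set up the data. Let $\cdots \embby{f_1} (A_1 \embby{j_1} X) \embby{f_0} (A_0 \embby{j_0} X)$ be a diagram in $\Emb(X)$; forgetting the structure maps to $X$ gives a diagram $\cdots \embby{f_1} A_1 \embby{f_0} A_0$ in $\C$, which by axiom~\axII\ has a limit cone $\bigl( L \embby{p_n} A_n \bigr)_{n \in \N}$. By Lemma~\ref{lemma:proj-emb}, each $p_n$ is an embedding. The first task is to promote $L$ to an object of $\Emb(X)$: I would set $j \from L \to X$ to be the composite $L \embby{p_n} A_n \embby{j_n} X$, which is independent of $n$ because the $j_n$ form a cone over the constant diagram at $X$ (the squares $j_n = j_{n-1} f_{n-1}$ commute since each $f_{n-1}$ is a map in $\Emb(X)$). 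This $j$ is a composite of embeddings, hence an embedding, so $(L \embby{j} X)$ is a genuine object of $\Emb(X)$, and each $p_n$ is now a map in $\Emb(X)$ (the required triangle $j = j_n p_n$ holds by construction). So we have a cone in $\Emb(X)$.

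Next I would verify this cone is limiting in $\Emb(X)$ and that $V$ reflects this. Given any cone $\bigl( (B \embby{k} X) \toby{s_n} A_n \bigr)$ in $\Emb(X)$, forgetting to $\C$ and using the limit property of $L$ in $\C$ yields a unique $\bar s \from B \to L$ with $p_n \bar s = s_n$ for all $n$; this $\bar s$ is automatically a map in $\Emb(X)$ since $j \bar s = j_0 p_0 \bar s = j_0 s_0 = k$ (using that $s_0$ is a map in $\Emb(X)$), and it is the unique such map because it is already unique in $\C$. This gives the universal property in $\Emb(X)$ and simultaneously shows $V$ creates the limit: any cone upstairs lying over the limit cone downstairs must, by the uniqueness just established, be (isomorphic to) this one.

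The main obstacle — though it is more a point requiring care than a genuine difficulty — is checking that the $X$-structure map $j$ on $L$ is well-defined, i.e.\ that the limit projections $j_n \of p_n \from L \to X$ agree for all $n$; this is exactly the statement that $L$, equipped with the family $(j_n p_n)$, is a cone over the \emph{constant} diagram at $X$, which follows from the commuting triangles \eqref{eq:slice-map} witnessing that each $f_n$ is a morphism of $\Emb(X)$. Once that is in hand, everything else is a routine transport of the universal property across the faithful forgetful functor, using Lemma~\ref{lemma:proj-emb} only to know the projections land in $\Emb(X)$ and the two-out-of-three property of Lemma~\ref{lemma:fs-basic} to know the mediating maps are embeddings.
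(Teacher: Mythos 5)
Your proof is correct and takes essentially the same route as the paper: observe via two-out-of-three that the diagram consists of embeddings, use axiom \axII\ and Lemma~\ref{lemma:proj-emb} to get a limit cone of embeddings in $\C$, equip $L$ with the forced structure map $j_0 p_0$ (an embedding), and check the universal property in $\Emb(X)$. The only difference is cosmetic: the paper delegates the lifting step to the standard fact that the forgetful functor $\C/X \to \C$ strictly creates connected limits, whereas you verify that universal property by hand.
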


\begin{proof}
Take a diagram
\begin{align}
\label{eq:cr-seq}
\cdots \toby{u_1} 
\lefts
\begin{array}{c}
\xymatrix{A_1 \ar@{ >->}[d]^{i_1} \\ X} 
\end{array}
\rights  \toby{u_0}
\lefts 
\begin{array}{c}
\xymatrix{A_1 \ar@{ >->}[d]^{i_0} \\ X} 
\end{array}
\rights
\end{align}
in $\Emb(X)$. We show that $\cdots \toby{u_1} A_1 \toby{u_0} A_0$ has
a limit cone in $\C$ and that any such cone lifts uniquely to a cone in
$\Emb(X)$, which is also a limit cone.

By the two out of three property, each $u_n$ is an embedding. Hence the
diagram $\cdots \embby{u_1} A_1 \embby{u_0} A_0$ has a limit cone $\bigl( L
\toby{j_n} A_n \bigr)_{n \in \N}$ in $\C$, and by
Lemma~\ref{lemma:proj-emb}, each $j_n$ is an embedding. The forgetful
functor $\C/X \to \C$ strictly creates connected limits, so there is a
unique map $k \from L \to X$ such that
\begin{align}
\label{eq:cr-cone}
\left(
\lefts
\begin{array}{c}
\xymatrix{L \ar[d]^k\\ X}
\end{array}
\rights
\xymatrix{ \ \ar@{ >->}[r]^{j_n} & \ }
\lefts
\begin{array}{c}
\xymatrix{A_n \ar@{ >->}[d]^{i_n} \\ X}
\end{array}
\rights
\right)_{n \in \N}
\end{align}
is a limit cone in $\C/X$. Then $k = i_0 j_0$, and $i_0$ and $j_0$ are
embeddings, so $k$ is too. Hence~\eqref{eq:cr-cone} is a limit cone
on~\eqref{eq:cr-seq} in $\Emb(X)$. 
\end{proof}

\begin{thm}
\label{thm:coalg}
Let $\C$ be a category with a factorization system of finite type. Let
$\ento{X}{f}$ be an endomorphism in $\C$. Then $\ei{f}$ is
the terminal coalgebra for the endofunctor $\ento{\Emb(X)}{f_!}$.
\end{thm}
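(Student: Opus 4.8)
The plan is to apply Ad\'amek's theorem to the endofunctor $f_!$ on the category $\Emb(X)$. First I would identify the terminal object of $\Emb(X)$: since the identity $X \embby{1} X$ is an embedding (isomorphisms are embeddings) and every object $A \embby{j} X$ maps to it uniquely via $j$ itself, the terminal object is $1 = (X \embby{1} X)$. Next I would compute the Ad\'amek tower \eqref{eq:ad-seq} for $T = f_!$: applying $f_!$ to $X \embby{1} X$ gives the image factorization of $f \of 1 = f$, which is $\im(f) \embby{} X$; iterating, $f_!^n 1 = (\im(f^n) \embby{} X)$, and the connecting maps $f_!^n! \from f_!^{n+1}1 \to f_!^n 1$ are exactly the embeddings $\im(f^{n+1}) \emb \im(f^n)$ from \eqref{eq:emb-cov}. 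So the Ad\'amek tower in $\Emb(X)$ is precisely the diagram $\cdots \emb \im(f^2) \emb \im(f) \emb X$ whose limit, by Theorem~\ref{thm:main}\bref{part:main-lim}, is $\ei{f}$ with $0$th projection $\iota_f$.

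The remaining hypotheses of Ad\'amek's theorem are then: (a) this tower has a limit in $\Emb(X)$, and (b) the limit is preserved by $f_!$. For (a), Lemma~\ref{lemma:creation} says $\Emb(X)$ has sequential limits, created by the forgetful functor to $\C$; combined with the computation above, the limit of the tower in $\Emb(X)$ is $\ei{f} \embby{\iota_f} X$. For (b), I would need to check that $f_!$ preserves this sequential limit. The cleanest route is to observe that $f_!$, followed by the forgetful functor $\Emb(X) \to \C$, sends $A \embby{j} X$ to $\im(fj)$, and that forming $\im(fj)$ is a matter of taking the image of a composite; since the forgetful functor creates sequential limits (Lemma~\ref{lemma:creation}), it suffices to check preservation after applying it, i.e.\ to check that the canonical map $\im(f \iota_f) \to \lim_n \im(f^{n+1})$ is an isomorphism. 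But $\lim_n \im(f^{n+1}) = \lim_n \im(f^n) = \ei{f}$ by cofinality, and $\im(f \iota_f)$ is computed by factoring $f \iota_f = \iota_f \au{f}$ (the square in Example~\ref{eg:ei-coalg}), which since $\au{f}$ is an isomorphism and $\iota_f$ an embedding gives $\im(f\iota_f) = \ei{f}$; chasing through, the canonical map is the identity.

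With both hypotheses verified, Ad\'amek's theorem yields that $(\ei{f}, \lambda^{-1})$ is the terminal $f_!$-coalgebra, where $\lambda \from f_!(\ei{f}) \to \ei{f}$ is the canonical isomorphism. The last thing to check is that this coalgebra structure coincides with the one described in Example~\ref{eg:ei-coalg}, namely the identity map $\ei{f} \to f_!(\ei{f}) = \ei{f}$. This amounts to showing $\lambda = 1$, which follows from the defining property of $\lambda$ (it is the unique map making $\lambda$ compatible with the projections $j_{n+1}$ and $f_!(j_n)$) together with the fact that, under the identification $f_!(\ei{f}) = \ei{f}$ coming from Example~\ref{eg:ei-coalg}, the projection of the limit is already $\iota_f = j_0$ on the nose.

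I expect the main obstacle to be item (b), the preservation of the limit by $f_!$: one must carefully track how the image factorization interacts with the sequential limit, using the cofinality of $n \mapsto n+1$ and the compatibility square of Example~\ref{eg:ei-coalg} (equivalently Lemma~\ref{lemma:emb-cov}), and confirm that the relevant canonical comparison map is genuinely an isomorphism rather than merely a covering or embedding. Everything else is bookkeeping: identifying the terminal object, recognizing the Ad\'amek tower, and matching up the two descriptions of the coalgebra structure.
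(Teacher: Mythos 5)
Your route is the paper's own: terminal object $(X \toby{1} X)$ of $\Emb(X)$, Ad\'amek tower $\cdots \emb \im(f^2) \emb \im(f) \emb X$, its limit $\ei{f} \embby{\iota_f} X$ via Theorem~\ref{thm:main}\bref{part:main-lim} and Lemma~\ref{lemma:creation}, then Ad\'amek's theorem. The one genuine gap is exactly the step you defer with ``chasing through, the canonical map is the identity''. After your (legitimate) reduction along the forgetful functor --- which creates the relevant limits and reflects isomorphisms --- the comparison map is the identity if and only if $f_!(j_n) = j_{n+1}$ for every $n$, and that equality is where all the work lies; it is not supplied by the ingredients you name. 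By the orthogonality-based definition of $f_!$ on morphisms, proving $f_!(j_n) = j_{n+1}$ requires checking two things: the triangle $\bigl(\im(f^{n+1}) \emb X\bigr) \of j_{n+1} = \iota_f$ (the cone property, fine) and the square
\[
j_{n+1} \of \au{f} \;=\; \bigl(\im(f^n) \cov \im(f^{n+1})\bigr) \of j_n .
\]
This square is \emph{not} a consequence of the compatibility square $f\iota_f = \iota_f\au{f}$ of Example~\ref{eg:ei-coalg} together with Lemma~\ref{lemma:emb-cov}: composing with the embedding $\im(f^{n+1}) \emb X$ shows only that both sides agree after postcomposition with that embedding, and embeddings in a general factorization system need not be monic, so you cannot cancel. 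What does supply the square is the final clause of Theorem~\ref{thm:main}\bref{part:main-lim}: $\au{f}$ is, by construction, the map on limits induced by the ladder~\eqref{eq:main-ladder}, which is characterized precisely by the commutativity of these squares. This is the computation the paper's proof actually carries out (together with the routine induction, via~\eqref{eq:in-out} and Lemma~\ref{lemma:emb-cov}, identifying $f_!^n$ of the terminal object and the connecting maps --- which you assert but should also record).

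Once $f_!(j_n) = j_{n+1}$ is established, the rest of your plan goes through: $f_!$ sends the limit cone to the reindexed cone, so the limit is preserved; and your final check that the structure map is the identity of Example~\ref{eg:ei-coalg} is immediate, since $j_{n+1}\lambda = f_!(j_n) = j_{n+1}$ for all $n$ forces $\lambda = 1$ by the limit property. So the proposal is the same proof in outline, but the decisive verification is missing and the tools cited for it would not close it as stated.
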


\begin{proof}
We use Ad\'amek's theorem, first showing that the diagram~\eqref{eq:ad-seq}
is in this case
\begin{align}
\label{eq:coalg-seq}
\cdots 
\xymatrix{ \ \ar@{ >->}[r] & \ }
\lefts
\begin{array}{c}
\xymatrix{\im(f^2) \ar@{ >->}[d]\\ X}
\end{array}
\rights
\xymatrix{ \ \ar@{ >->}[r] & \ }
\lefts
\begin{array}{c}
\xymatrix{\im(f) \ar@{ >->}[d]\\ X}
\end{array}
\rights
\xymatrix{ \ \ar@{ >->}[r] & \ }
\lefts
\begin{array}{c}
\xymatrix{X \ar[d]^1 \\ X}
\end{array}
\rights.
\end{align}
The terminal object of $\Emb(X)$ is $(X \toby{1} X)$. That $f_!^n$ applied
to the terminal object is $\im(f^n) \emb X$ follows by induction from the
rightmost square of diagrams~\eqref{eq:in-out} with $k = 1$. Now
assume inductively that the map $T^n!$ of diagram~\eqref{eq:ad-seq} is
the embedding
 \begin{align}
\label{eq:slice-term-n}
\lefts
\begin{array}{c}
\xymatrix{ \im(f^{n + 1}) \ar@{ >->}[d] \\ X } 
\end{array}
\rights
\ 
\xymatrix{ \ \ar@{ >->}[r] & \ }
\ 
\lefts
\begin{array}{c}
\xymatrix{ \im(f^n) \ar@{ >->}[d] \\ X } 
\end{array}
\rights.
\end{align}
By definition, $f_!$ applied to the map~\eqref{eq:slice-term-n} is the
unique dotted map making the diagram
\[
\xymatrix@C-2ex{
\im(f^{n + 1}) \ar@{ >->}[rr] \ar@{->>}[d]      &
&
\im(f^n) \ar@{->>}[d]   \\
\im(f^{n + 2}) \ar@{.>}[rr] \ar@{ >->}[rd]      &
&
\im(f^{n + 1}) \ar@{ >->}[ld]   \\
&
X
}
\]
commute. But by Lemma~\ref{lemma:emb-cov}, the embedding $\im(f^{n + 2})
\emb \im(f^{n + 1})$ makes this diagram commute, completing the
induction.

Theorem~\ref{thm:main}\bref{part:main-lim} gives a limit cone 
\[
\bigl( \ei{f} \embby{j_n} \im(f^n) \bigr)_{n \in \N}
\]
on $\cdots \emb \im(f) \emb X$, and then
\begin{align}
\label{eq:coalg-cone}
\left(
\lefts
\begin{array}{c}
\xymatrix{ \ei{f} \ar@{ >->}[d]^{\iota_f = j_0} \\ X}
\end{array}
\rights
\xymatrix{\ \ar@{ >->}[r]^{j_n} &\ }
\lefts
\begin{array}{c}
\xymatrix{ \im(f^n) \ar@{ >->}[d] \\ X }
\end{array}
\rights
\right)_{n \in \N}
\end{align}
is a cone on~\eqref{eq:coalg-seq}. By Lemma~\ref{lemma:creation}, it is a
limit cone.

It remains to show that this limit is preserved by $f_!$, and for this, it
is enough to prove that $f_!$ maps the cone~\eqref{eq:coalg-cone} to
\[
\left(
\lefts
\begin{array}{c}
\xymatrix{ \ei{f} \ar@{ >->}[d]^{\iota_f} \\ X}
\end{array}
\rights
\xymatrix{\ \ar@{ >->}[r]^{j_{n + 1}} &\ }
\lefts
\begin{array}{c}
\xymatrix{ \im(f^{n + 1}) \ar@{ >->}[d] \\ X }
\end{array}
\rights
\right)_{n \in \N}.
\]
We have already shown that $f_!$ fixes the object $\ei{f} \embby{\iota_f}
X$ of $\Emb(X)$ and that it maps $\im(f^n) \emb X$ to $\im(f^{n + 1})
\emb X$. Moreover, for each $n \in \N$ we have a commutative diagram
\[
\xymatrix{
\ei{f} \ar@{ >->}[rr]^{j_n} \ar[d]_{\au{f}}^{\iso}      &
&
\im(f^n) \ar@{->>}[d]   \\
\ei{f} \ar@{ >->}[rr]^{j_{n + 1}} \ar@{ >->}[rd]_{\iota_f}      &
&
\im(f^{n + 1}) \ar@{ >->}[ld]   \\
&
X,
}
\]
where the square commutes by Theorem~\ref{thm:main}\bref{part:main-lim}. By
definition of $f_!$ on morphisms, this means that $f_!(j_n) = j_{n + 1}$,
as required.
\end{proof}

The dual result characterizes the eventual image as the initial algebra for
an endofunctor on the category of covering maps out of $X$.

\section{Finite sets}
\label{sec:set}

The category $\FinSet$ of finite sets has a factorization system consisting
of injections and surjections. It is of finite type:
axioms~\axI\ and~\axIs\ state that any injective or surjective endomorphism
of a finite set is invertible, and the rest of the axioms are trivial
because any diagram
\[
\cdots \emb X_1 \emb X_0
\qquad \text{or} \qquad
X_0 \cov X_1 \cov \cdots
\]
in $\FinSet$ stabilizes after a finite number of steps. So
Theorems~\ref{thm:main} and~\ref{thm:coalg} apply, showing that $\FinSet$
has eventual image duality and providing characterizations of the
eventual image, which we now study in detail.

For the rest of this section, let $f$ be an endomorphism of a finite set
$X$ (Figure~\ref{fig:set-endo}).

\begin{figure}
\lengths
\begin{picture}(120,40)
\cell{60}{20}{c}{\includegraphics[width=120\unitlength]{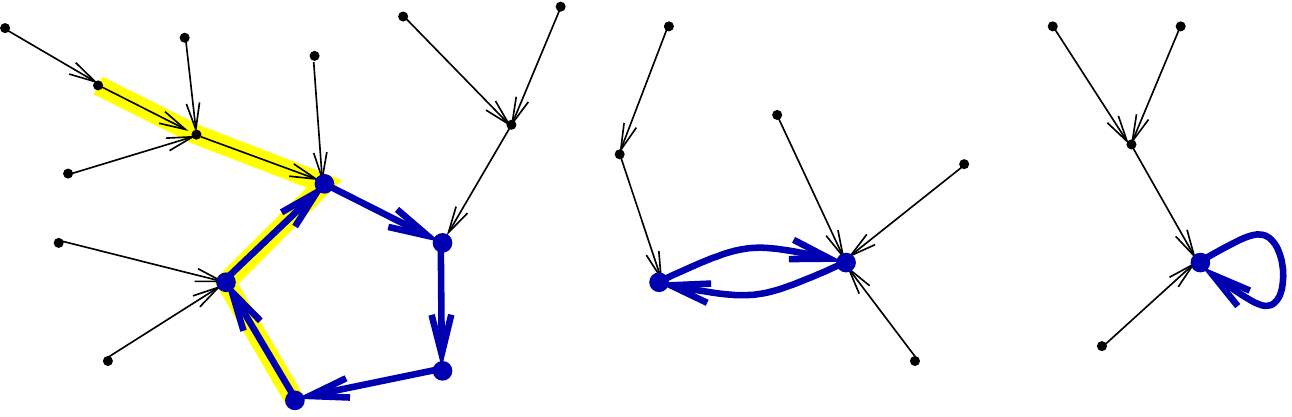}}
\cell{7.5}{29.5}{c}{$x$}
\cell{21}{-0.5}{b}{$f^\infty(x)$}
\end{picture}%
\caption{An endomorphism $f$ of a finite set. The eventual image and its
automorphism $\au{f}$ are shown in bold blue. A point $x$ and the resulting
point $f^\infty(x)$ are shown, with the yellow highlighted path
illustrating the back and forth description of $f^\infty$.}
\label{fig:set-endo}
\end{figure}

The eventual image of $f$ is both the limit and colimit of the diagram 
\begin{align}
\label{eq:set-seq}
\cdots \toby{f} X \toby{f} X \toby{f} X \toby{f} \cdots.
\end{align}
Example~\ref{eg:evim-set} gives explicit descriptions of the limit, the
colimit, and the canonical map from limit to colimit. That the
canonical map is bijective means that for any $N \in \Z$ 
and $x \in X$, there is a unique double sequence $(y_n)_{n \in \Z}$ such that $f(y_n) = y_{n +
1}$ for all $n$ and $y_n = f^{n - N}(x)$ for all sufficiently large
$n$. Writing $x_n = f^{n - N}(x)$, this condition can be depicted as follows:
\[
\xymatrix@C-.2em{
\cdots \ar@{|->}[r]     &
y_N \ar@{|->}[r]        &
\cdots \ar@{|->}[r]  &
y_{p - 1} \ar@{|->}[rd]    &
&
\\
&
x = x_N \ar@{|->}[r]        &
\cdots \ar@{|->}[r]  &
x_{p - 1} \ar@{|->}[r]     &
x_p = y_p \ar@{|->}[r]  &
x_{p + 1} = y_{p + 1} \ar@{|->}[r]      &
\cdots
}
\]

Theorem~\ref{thm:main}\bref{part:main-lim} implies that $\ei{f} \iso
\bigcap_{n \in \N} \im(f^n)$.  The chain of inclusions
\begin{align}
\label{eq:nested-sets}
\cdots \sub \im(f^2) \sub \im(f) \sub X
\end{align}
contains at most $\mg{X}$ proper inclusions, and if any inclusion is an
equality then so are all the inclusions to its left. Hence the sequence
stabilizes after at most $\mg{X}$ steps and $\ei{f} = \im(f^{\mg{X}})$.
The canonical isomorphism from the limit of~\eqref{eq:set-seq} to
$\bigcap_{n \in \N} \im(f^n)$ is $(x_n)_{n \in \Z} \mapsto x_0$.

Dually, Theorem~\ref{thm:main}\bref{part:main-colim} implies that $\ei{f}
\iso X/{\sim}$, where $x \sim y$ if $f^n(x) = f^n(y)$ for some $n \in
\N$. By a similar stabilization argument, $x \sim y$ if and only if
$f^{\mg{X}}(x) = f^{\mg{X}}(y)$. The canonical isomorphism from $X/{\sim}$
to the colimit of~\eqref{eq:set-seq} maps the equivalence class of $x$ to
the equivalence class of $(0, x)$.

Let us temporarily write $\ei{f}$ and $\edi{f}$ for the limit and colimit
of~\eqref{eq:set-seq}, respectively. The map $\iota_f \from \ei{f} \to X$
is $(x_n)_{n \in \Z} \mapsto x_0$, and the map $\pi_f \from X \to \edi{f}$
is $x \mapsto [(0, x)]$. Their composite is the canonical map
\[
\begin{array}{cccc}
\can \from      &\ei{f}                 &\to            &\edi{f}        \\
                &(x_n)_{n \in \Z}       &\mapsto        &[(0, x_0)],
\end{array}
\]
which is a bijection. Now consider the idempotent $f^\infty$, which is the
composite
\begin{equation}
\label{eq:set-idem-comp}
X \covby{\pi_f} \edi{f} \toby{\can^{-1}} \ei{f} \embby{\iota_f} X.
\end{equation}

\begin{propn}
\label{propn:set-trace}
Let $x \in X$. Then for all $n \geq \mg{X}$, we have $f^n(x) \in \ei{f}$
and $f^\infty(x) = \au{f}^{-n}(f^n(x))$.
\end{propn}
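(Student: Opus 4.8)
The plan is to unwind the composite description \eqref{eq:set-idem-comp} of $f^\infty$ by tracing a point $x \in X$ through the three maps $X \xrightarrow{\pi_f} \edi{f} \xrightarrow{\can^{-1}} \ei{f} \xrightarrow{\iota_f} X$. First I would observe that, by the stabilization argument already given, once $n \geq \mg{X}$ we have $f^n(x) \in \im(f^n) = \im(f^{\mg{X}}) = \ei{f}$ under the identification of $\ei{f}$ with $\bigcap_{m} \im(f^m)$; this gives the first assertion immediately. The point $f^n(x)$, viewed as an element of the limit $\ei{f}$, is a double sequence $(z_m)_{m \in \Z}$ with $z_0 = f^n(x)$ and $z_m = f^{m+n}(x)$ for all $m$ large enough (in fact for all $m \geq -n+\mg{X}$, say), and $\au{f}$ acts on such sequences by the shift $(\au{f}(z_\bullet))_m = z_{m+1}$, so $\au{f}^{-n}$ shifts the other way: $(\au{f}^{-n}(z_\bullet))_0 = z_{-n}$. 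Applying $\iota_f$, which is evaluation at index $0$, we get $\iota_f(\au{f}^{-n}(z_\bullet)) = z_{-n}$.

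So the real content is to show that $z_{-n} = f^\infty(x)$, where $(z_m)$ is the unique double sequence with $f(z_m) = z_{m+1}$ for all $m$ and $z_m = f^{m+n}(x)$ for all sufficiently large $m$. I would argue this by chasing the definition of the canonical bijection. The element $\pi_f(x) \in \edi{f}$ is the class $[(0,x)]$, equivalently the class of the tail $(f^j(x))_{j \geq 0}$. Under $\can^{-1} \from \edi{f} \to \ei{f}$, this class corresponds to the unique double sequence $(y_m)_{m \in \Z}$ satisfying $f(y_m) = y_{m+1}$ for all $m \in \Z$ and $y_m = f^m(x)$ for all sufficiently large $m$ — this is exactly the "back and forth" picture drawn just before the proposition, with $N = 0$. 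Then $f^\infty(x) = \iota_f(\can^{-1}(\pi_f(x))) = y_0$. It therefore suffices to identify the two double sequences: $(y_m)$, characterized by $y_m = f^m(x)$ for large $m$, and the shifted sequence $m \mapsto z_{m+ \text{(something)}}$ coming from $f^n(x) \in \ei{f}$. Since $z_m = f^{m+n}(x)$ for large $m$, the sequence $(z_{m-n})_m = (z_m \text{ reindexed})$ satisfies $z_{(m)-n} = f^{m}(x)$ for large $m$ and is still an $f$-compatible double sequence, so by the uniqueness clause in the bijectivity of $\can$ we get $y_m = z_{m-n}$ for all $m \in \Z$; in particular $f^\infty(x) = y_0 = z_{-n}$, which is precisely $\iota_f(\au{f}^{-n}(f^n(x)))$.

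The main obstacle I anticipate is purely bookkeeping: keeping straight which map shifts which way, and matching the reindexing $z_m = f^{m+n}(x)$ (valid for $m$ large) against the normalization $y_m = f^m(x)$ (valid for $m$ large) so that the uniqueness statement in Example~\ref{eg:evim-set} can be invoked to conclude $y_m = z_{m-n}$. A clean way to organize this is to fix $n \geq \mg{X}$ at the outset, note that all the relevant sequences agree with $m \mapsto f^m(x)$ for $m \geq 0$ (using $n \geq \mg{X}$ to guarantee $f^n(x) \in \ei{f}$ and that the backward extension is forced), and then apply uniqueness once. No delicate estimate is needed — the finiteness of $X$ has already been used to guarantee stabilization, and everything else is a diagram/sequence chase through maps whose formulas are recorded in the paragraphs immediately preceding the proposition.
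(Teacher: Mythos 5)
Your proof is correct and follows essentially the same route as the paper: both unwind the composite $\iota_f \of \can^{-1} \of \pi_f$ using the explicit double-sequence descriptions from Example~\ref{eg:evim-set} and conclude via the uniqueness of the compatible double sequence agreeing with $(f^m(x))$ for large $m$. The only (harmless) differences are bookkeeping: you get $f^n(x) \in \ei{f}$ directly from the stabilization $\im(f^n) = \im(f^{\mg{X}})$ rather than from the $f$-invariance of $\ei{f}$, and you phrase the last step as an identification of shifted sequences where the paper first derives $f^n(f^\infty(x)) = f^n(x)$ and then applies $\au{f}^{-n}$.
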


Typically we treat $\ei{f}$ as a subset of $X$, as in this
statement.

\begin{proof}
Let $x \in X$. The effect of the maps~\eqref{eq:set-idem-comp} on $x$ is
\[
x \mapsto
[(0, x)] \mapsto
(y_n)_{n \in \Z} \mapsto
y_0 = f^\infty(x),
\]
where $(y_n)_{n \in \Z}$ is the unique double sequence in $X$ such that
$f(y_n) = y_{n + 1}$ for all $n \in \Z$ and $y_n = f^n(x)$ for all
sufficiently large $n$. Equivalently, `for all sufficiently large $n$' can
be replaced by `for all $n \geq \mg{X}$' (by the description of $\sim$
above). Hence for all $n \geq \mg{X}$, 
\begin{equation}
\label{eq:st-final}
f^n(f^\infty(x)) = f^n(y_0) = y_n = f^n(x)
\end{equation}
Since $f^\infty(x)$ is in the
subset $\ei{f}$ of $X$, which is $f$-invariant, $f^n(x) \in
\ei{f}$. Finally, applying $\au{f}^{-n}$ to~\eqref{eq:st-final} gives
$f^\infty(x) = \au{f}^{-n}(f^n(x))$.
\end{proof}

This result gives a back and forth algorithm for computing $f^\infty(x)$
(Figure~\ref{fig:set-endo}): apply $f$ to $x$ enough times to put it into
the eventual image, then apply $\au{f}^{-1}$ the same number of times.

The back and forth description of $f^\infty$ is well known in finite
semigroup theory, and further light is shed by a standard result
(Corollary~1.2 of~\cite{SteiRTF}):

\begin{lemma}
\label{lemma:semi-idem}
Let $S$ be a finite semigroup and $\sigma \in S$. Then the set $\{\sigma,
\sigma^2, \ldots\}$ contains exactly one idempotent.
\end{lemma}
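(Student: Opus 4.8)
The plan is to treat existence and uniqueness separately, using nothing but elementary manipulation of the powers of $\sigma$. For existence, I would first invoke the finiteness of $S$ to find a repetition in the infinite list $\sigma, \sigma^2, \ldots$: there are integers $m \geq 1$ and $p \geq 1$ with $\sigma^{m+p} = \sigma^m$. Multiplying this relation repeatedly by $\sigma$ shows that $\sigma^{n+p} = \sigma^n$ for every $n \geq m$, and hence $\sigma^{n+kp} = \sigma^n$ for all $n \geq m$ and all $k \geq 0$. Now take $N = mp$, which is simultaneously a multiple of $p$ and at least $m$. Then $\sigma^N \sigma^N = \sigma^{2N}$, and since $2N = N + mp$ with every intermediate exponent $N + jp$ at least $N \geq m$, applying the period relation $m$ times gives $\sigma^{2N} = \sigma^N$. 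Thus $\sigma^N$ is an idempotent lying in $\{\sigma, \sigma^2, \ldots\}$.

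For uniqueness, suppose $\sigma^a$ and $\sigma^b$ are both idempotent, with $a, b \geq 1$. Iterating idempotency gives $\sigma^a = (\sigma^a)^b = \sigma^{ab}$ and, symmetrically, $\sigma^b = (\sigma^b)^a = \sigma^{ab}$, so $\sigma^a = \sigma^b$. Hence the idempotent produced above is the only one in the set.

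I do not expect a genuine obstacle here: the argument is short and purely arithmetic. The one point that calls for a little care in the existence half is to exhibit a power of $\sigma$ that lies in the eventually periodic part of the sequence and is at the same time a multiple of the period; the choice $N = mp$ handles this cleanly, and in fact any sufficiently large multiple of $p$ would serve equally well.
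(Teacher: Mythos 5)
Your proof is correct and follows essentially the same route as the paper: finiteness yields an eventual period $p$, the power $\sigma^{mp}$ (the paper's $\sigma^{mk}$) is the idempotent, and uniqueness follows from $\sigma^a = (\sigma^a)^b = (\sigma^b)^a = \sigma^b$. No gaps.
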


\begin{proof}
Since $S$ is finite, there exist $m, k \geq 1$ such that $\sigma^m =
\sigma^{m + k}$. Then $\sigma^{n + rk} = \sigma^n$ for all $n \geq m$ and
$r \geq 0$. Hence $\sigma^{mk}$ is idempotent. Moreover,
if both $\sigma^p$ and $\sigma^q$ are idempotent ($p, q \geq 1$) then
$\sigma^p = (\sigma^p)^q = (\sigma^q)^p = \sigma^q$. 
\end{proof}

Next we show that $f^\infty$ is a finite power of $f$.

\begin{propn}
\label{propn:sets-unique-idem}
The endomorphism $f^\infty$ is the unique idempotent element of $\{f, f^2,
\ldots\}$. In fact, $f^\infty = f^{\mg{X}!}$. 
\end{propn}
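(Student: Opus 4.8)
The plan is to prove the explicit identity $f^\infty = f^{\mg{X}!}$ directly, drawing on Proposition~\ref{propn:set-trace}, and then to read off the uniqueness statement from Lemma~\ref{lemma:semi-idem}.

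First I would record the single fact about $\au{f}$ that is needed. Treating $\ei{f}$ as a subset of $X$, the map $\au{f}$ is a bijection of $\ei{f}$ onto itself (it is an automorphism in $\FinSet$), and $\mg{\ei{f}} \leq \mg{X}$; hence the order of the permutation $\au{f}$ divides $\mg{\ei{f}}!$, which in turn divides $\mg{X}!$. In particular $\au{f}^{\,\mg{X}!} = \id_{\ei{f}}$, and so also $\au{f}^{-\mg{X}!} = \id_{\ei{f}}$.

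Next I would fix $x \in X$ and apply Proposition~\ref{propn:set-trace} with $n = \mg{X}!$; this is legitimate because $\mg{X}! \geq \mg{X}$ for every $\mg{X} \geq 0$. The proposition gives $f^{\mg{X}!}(x) \in \ei{f}$ together with $f^\infty(x) = \au{f}^{-\mg{X}!}\bigl(f^{\mg{X}!}(x)\bigr)$. By the previous paragraph the outer map is the identity on $\ei{f}$, so $f^\infty(x) = f^{\mg{X}!}(x)$. Since $x$ was arbitrary, $f^\infty = f^{\mg{X}!}$.

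Finally, $\mg{X}! \geq 1$, so this exhibits $f^\infty$ as a member of $\{f, f^2, \ldots\}$, and $f^\infty$ is idempotent since $\pi_f \iota_f = 1$. The monoid of all self-maps of $X$ under composition is a finite semigroup, so Lemma~\ref{lemma:semi-idem}, applied to $f$ in that semigroup, shows that $\{f, f^2, \ldots\}$ contains exactly one idempotent; it must therefore be $f^\infty$. There is no real obstacle here — the construction of the eventual image is doing all the work upstream — and the only point meriting a moment's care is verifying that $n = \mg{X}!$ is an admissible value in Proposition~\ref{propn:set-trace}, i.e.\ that $\mg{X}! \geq \mg{X}$.
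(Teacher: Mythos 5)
Your proof is correct, and it obtains the explicit identity $f^\infty = f^{\mg{X}!}$ by a more direct route than the paper. The paper first uses Proposition~\ref{propn:set-trace} with $n = \mg{X}$ together with the fact that $\au{f}^{-1} = \au{f}^{\,r}$ for some $r \geq 0$ to show only that $f^\infty$ is \emph{some} positive power of $f$ (namely $f^{(r+1)\mg{X}}$ for an unspecified $r$); it then proves separately, by refining the element-wise eventual-periodicity argument from the proof of Lemma~\ref{lemma:semi-idem}, that $f^{\mg{X}!}$ is idempotent, and finally invokes the uniqueness of the idempotent in $\{f, f^2, \ldots\}$ to conclude that the two powers coincide. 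You instead sharpen the quantitative input about $\au{f}$: since $\au{f}$ is a permutation of a set with at most $\mg{X}$ elements, its order divides $\mg{X}!$, so applying Proposition~\ref{propn:set-trace} at $n = \mg{X}!$ (legitimate, since $\mg{X}! \geq \mg{X}$) yields $f^\infty(x) = \au{f}^{-\mg{X}!}\bigl(f^{\mg{X}!}(x)\bigr) = f^{\mg{X}!}(x)$ outright; Lemma~\ref{lemma:semi-idem} is then needed only for the uniqueness clause, not to establish the formula. The trade-off is small: your computation of the exponent leans a little harder on the eventual-image machinery (the back-and-forth formula and the invertibility of $\au{f}$), whereas the paper's idempotency check for $f^{\mg{X}!}$ is purely elementary and independent of that machinery; but both arguments are complete, and yours is arguably the cleaner derivation of the stated power.
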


\begin{proof}
Since $\au{f}$ is a permutation of a finite set, $\au{f}^{-1} = \au{f}^{\,r}$
for some $r \geq 0$. Then by Proposition~\ref{propn:set-trace}, $f^\infty =
f^{(r + 1)\mg{X}} \in \{f, f^2, \ldots\}$. 

To prove that $f^\infty = f^{\mg{X}!}$, we refine the proof of
Lemma~\ref{lemma:semi-idem}. Let $x \in X$. Since $x, f(x), \ldots,
f^{\mg{X}}(x)$ are not all distinct, $f^m(x) = f^{m + k}$ for some $m \geq
0$ and $k \geq 1$ such that $m + k \leq \mg{X}$. Then $f^{n + rk}(x) =
f^n(x)$ for all $n \geq m$ and $r \geq 0$. In particular, $f^{2n}(x) =
f^n(x)$ whenever $n \geq m$ and $k \dvd n$. It follows that $f^{\mg{X}!}$
is idempotent, so by the uniqueness part of Lemma~\ref{lemma:semi-idem},
$f^\infty = f^{\mg{X}!}$. 
\end{proof}

Theorem~\ref{thm:coalg} describes $\ei{f}$ as the terminal coalgebra for
the endofunctor $f_!$ of $\Emb(X)$. In this case, $\Emb(X)$ is equivalent
to the power set of $X$, and Theorem~\ref{thm:coalg} states that $\ei{f}$
is the largest subset $A$ of $X$ satisfying $A \sub fA$. The dual theorem
states that $\ei{f} = X/{\sim}$, where $\sim$ is the finest equivalence
relation on $X$ such that $f(x) \sim f(y) \implies x \sim y$ for all $x, y
\in X$. 

A point $x \in X$ is \demph{periodic} for $f$ if $x \in \{f(x), f^2(x),
\ldots\}$. As Figure~\ref{fig:set-endo} suggests:

\begin{propn}
The set of periodic points of $f$ is $\ei{f}$.
\end{propn}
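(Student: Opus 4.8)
The plan is to prove the two inclusions separately. The key facts I will use are, first, the identity $\ei{f} = \bigcap_{n \in \N} \im(f^n)$ established above via Theorem~\ref{thm:main}\bref{part:main-lim}, and second, the fact that when $\ei{f}$ is regarded as a subset of $X$ the automorphism $\au{f}$ is nothing but the set-theoretic restriction $f|_{\ei{f}}$ (the commuting square of Example~\ref{eg:ei-coalg}, with $\iota_f$ the subset inclusion).

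First I would show that every periodic point of $f$ lies in $\ei{f}$. Suppose $x \in \{f(x), f^2(x), \ldots\}$, say $x = f^k(x)$ with $k \geq 1$. Applying $f^k$ repeatedly gives $x = f^{mk}(x)$ for every $m \geq 1$, so $x \in \im(f^{mk})$. Since the chain $\cdots \sub \im(f^2) \sub \im(f) \sub X$ is decreasing, for an arbitrary $n \in \N$ we may choose $m$ with $mk \geq n$ and conclude $x \in \im(f^{mk}) \sub \im(f^n)$. Hence $x \in \bigcap_{n \in \N} \im(f^n) = \ei{f}$.

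Conversely, I would show that every point of $\ei{f}$ is periodic. Since $\au{f}$ is an automorphism of the finite set $\ei{f}$, it is a permutation, so $\au{f}^{\,m} = \id$ for some $m \geq 1$ (one could take $m = \mg{\ei{f}}!$, or simply invoke $f^\infty = f^{\mg{X}!}$ from Proposition~\ref{propn:sets-unique-idem} together with $f^\infty \of \iota_f = \iota_f \pi_f \iota_f = \iota_f$, which says that $f^\infty$ fixes every point of $\ei{f}$). For $x \in \ei{f}$ this gives $x = \au{f}^{\,m}(x) = f^m(x)$ with $m \geq 1$, so $x$ is periodic. Combining the two inclusions proves the proposition.

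I do not expect a serious obstacle here; the only point that needs a moment's care is the identification of the abstract automorphism $\au{f}$ with the honest restriction of $f$ to the subset $\ei{f} \sub X$, since it is this that licenses treating $\au{f}$ as a permutation of $\ei{f}$ and reading $\au{f}^{\,m}(x) = f^m(x)$. But this identification has already been recorded in the discussion of $\FinSet$ (it underlies Proposition~\ref{propn:set-trace} and Example~\ref{eg:ei-coalg}), so the argument is immediate.
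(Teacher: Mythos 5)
Your proof is correct, and the converse inclusion ($\ei{f} \sub$ periodic points) is exactly the paper's argument: $\au{f}$ is a permutation of the finite set $\ei{f}$, some power of it is the identity, and the identification of $\au{f}$ with the restriction of $f$ to $\ei{f} \sub X$ (via $\iota_f \of \au{f} = f \of \iota_f$ with $\iota_f$ the inclusion) turns this into genuine periodicity --- your care on this point is warranted and matches what the paper silently uses. Where you diverge is the forward inclusion. You argue directly: a periodic point $x = f^k(x)$ lies in $\im(f^{mk})$ for all $m$, hence in every $\im(f^n)$ by nestedness, hence in $\ei{f} = \bigcap_{n \in \N} \im(f^n)$, which rests only on Theorem~\ref{thm:main}\bref{part:main-lim}. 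The paper instead observes that the set $A$ of periodic points satisfies $A \sub fA$ and invokes Theorem~\ref{thm:coalg}: $\ei{f}$ is the \emph{largest} subset with this property, so $A \sub \ei{f}$. Both routes are sound and equally short; yours is more elementary (no coalgebra machinery), while the paper's is chosen to illustrate the terminal-coalgebra characterization just proved, and it is the formulation that transfers verbatim to the ``largest $A$ with $A \sub fA$'' descriptions in the linear and metric settings. Your parenthetical alternative for the converse (using $f^\infty = f^{\mg{X}!}$ and $f^\infty \iota_f = \iota_f$) is also valid.
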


\begin{proof}
Let $A$ be the set of periodic points. By definition, $A \sub fA$, so
Theorem~\ref{thm:coalg} implies that $A \sub \ei{f}$. Conversely, every
element $x \in \ei{f}$ is periodic: $\au{f}$ is a permutation of the finite
set $\ei{f}$, so $\au{f}^{\,n} = 1$ for some $n \geq 1$, and then $x =
\au{f}^{\,n}(x) = f^n(x)$.
\end{proof}

\section{Finite-dimensional vector spaces}
\label{sec:vect}

Let $\FDVect$ be the category of finite-dimensional vector spaces over
a field $k$. The injective and surjective linear maps form a factorization
system of finite type: axioms~\axI\ and~\axIs\ hold because any
injective or surjective endomorphism is invertible (by the rank-nullity
formula), and the other axioms hold because any nested sequence of
subspaces or quotient spaces must stabilize after a finite number of
steps. By Theorem~\ref{thm:main}, $\FDVect$ has eventual image duality.

For the rest of this section, let $f$ be an endomorphism of a
finite-dimensional vector space $X$. 

As well as the eventual image of $f$, we consider its \demph{eventual
kernel} $\ek{f}$, the union of the nested sequence
\begin{align}
\label{eq:ker-nested}
\{0\} = \ker(f^0) \sub \ker(f^1) \sub \ker(f^2) \sub \cdots.
\end{align}
A standard lemma states: 

\begin{lemma}[Fitting]
\label{lemma:fitting}
$X = \ei{f} \oplus \ek{f}$. Moreover, $f$ restricts to an
automorphism of $\ei{f}$ and a nilpotent operator on $\ek{f}$. This is the
unique decomposition of $f$ as the direct sum of an automorphism and a
nilpotent. 
\end{lemma}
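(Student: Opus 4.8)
The plan is to prove Fitting's lemma by combining the two dual descriptions of the eventual image from Theorem~\ref{thm:main}, specializing to $\FDVect$ where embeddings are injections and coverings are surjections. The eventual image $\ei{f}$ is the subspace $\bigcap_n \im(f^n)$, which stabilizes: $\ei{f} = \im(f^N)$ for $N = \dim X$ (since $\dim \im(f^n)$ is nonincreasing and bounded below). Dually, the eventual kernel $\ek{f} = \bigcup_n \ker(f^n)$ stabilizes to $\ker(f^N)$. First I would establish the dimension count $\dim \ei{f} + \dim \ek{f} = \dim X$: this is immediate from rank-nullity applied to $f^N$, since $\im(f^N) = \ei{f}$ and $\ker(f^N) = \ek{f}$.

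Next I would show $\ei{f} \cap \ek{f} = \{0\}$, which together with the dimension count gives the direct sum decomposition $X = \ei{f} \oplus \ek{f}$. If $x \in \ei{f} \cap \ek{f}$, then $f^N x = 0$, but also $x \in \im(f^N)$, so $x = f^N y$ for some $y$; then $f^{2N} y = 0$, so $y \in \ker(f^{2N}) = \ker(f^N) = \ek{f}$ (using stabilization), hence $x = f^N y = 0$. Both summands are visibly $f$-invariant: $f(\ei{f}) \subseteq \ei{f}$ because $\ei{f} = \bigcap \im(f^n)$ and applying $f$ preserves each $\im(f^n)$, and $f(\ek{f}) \subseteq \ek{f}$ since $f^N x = 0 \implies f^N(fx) = f(f^N x) = 0$.

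Then I would verify that $f$ restricts to an automorphism on $\ei{f}$ and a nilpotent on $\ek{f}$. The restriction $f|_{\ek{f}}$ satisfies $(f|_{\ek{f}})^N = 0$, so it is nilpotent. For $\ei{f}$, by Example~\ref{eg:ind-self} the restriction of $f$ to $\ei{f}$ is $\au{f}$, which is an automorphism by the Lemma following Definition~\ref{defn:ei}; alternatively, one sees directly that $f|_{\ei{f}}$ is surjective since $f(\im(f^{n+1})) = \im(f^{n+2})$ forces $f(\ei{f}) = \ei{f}$ after stabilization, and a surjective endomorphism of a finite-dimensional space is invertible (axiom~\axIs).

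Finally I would establish uniqueness: suppose $X = V \oplus W$ with $f(V) \subseteq V$, $f(W) \subseteq W$, $f|_V$ an automorphism and $f|_W$ nilpotent, say $(f|_W)^m = 0$. Then $\ker(f^n) \subseteq W$ for all $n$ (if $v + w \in \ker(f^n)$ with $v \in V$, $w \in W$, then $f^n v = 0$ forces $v = 0$ since $f|_V$ is invertible), so $\ek{f} \subseteq W$; conversely $W \subseteq \ker(f^m) \subseteq \ek{f}$, giving $W = \ek{f}$. Similarly $\im(f^n) \supseteq V$ for all $n$ (as $f|_V$ is onto $V$) so $V \subseteq \ei{f}$, and $\ei{f} = \im(f^N) = f^N(V) \oplus f^N(W) = V \oplus 0 = V$. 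Hence $V = \ei{f}$ and $W = \ek{f}$, proving uniqueness. I expect the only mildly delicate point to be the stabilization arguments (ensuring $N = \dim X$ genuinely suffices for both the image and kernel chains, and that $\ker(f^{2N}) = \ker(f^N)$), but these are standard and the rest is routine linear algebra; there is no serious obstacle here given the machinery already developed.
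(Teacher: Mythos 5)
Your proof is correct. For comparison: the paper does not actually prove this lemma in-house --- it cites Axler (Theorem~8.5 of \emph{Linear Algebra Done Right}) for the decomposition $X = \ei{f} \oplus \ek{f}$ and asserts that the remaining claims follow. What you have written is essentially that standard textbook argument, spelled out in full: stabilization of the image and kernel chains at $N = \dim X$, rank--nullity applied to $f^N$, the trivial-intersection computation via $\ker(f^{2N}) = \ker(f^N)$, and then the invariance, automorphism/nilpotence, and uniqueness statements, the last of which the paper leaves entirely implicit. Two small points worth noting. First, you correctly route the identification of the categorical eventual image $\ei{f}$ with $\bigcap_n \im(f^n)$ through Theorem~\ref{thm:main}\bref{part:main-lim} (and the finite-type factorization system on $\FDVect$); without that step the lemma as stated in the paper would not even be about subspaces of $X$. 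Second, in the uniqueness argument the step $f^N(W) = 0$ deserves the one-line justification you implicitly have available: either $W = \ek{f} = \ker(f^N)$ from the first half of the uniqueness argument, or the nilpotence index of $f|_W$ is at most $\dim W \leq N$. With those points made explicit, your write-up is a complete, self-contained replacement for the paper's citation, and the uniqueness clause is a genuine addition rather than something the paper proves.
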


\begin{proof}
The first part is Theorem~8.5 of~\cite{AxleLAD}, and the other
parts follow.
\end{proof}

Theorem~\ref{thm:main}\bref{part:main-lim} shows that $\ei{f} = \bigcap_{n
\in \N} \im(f^n)$ and that $\iota_f \from \ei{f} \emb X$ is the
inclusion. On the other hand, Theorem~\ref{thm:main}\bref{part:main-colim}
shows that $\ei{f} = X/{\sim}$, where 
\[
x \sim y \iff f^n(x) = f^n(y) \text{ for some } n \in \N.
\]
Equivalently, $\ei{f} = X/\ek{f}$. Moreover, $\pi_f \from X \cov \ei{f}$ is
the quotient map $X \cov X/\ek{f}$.

The canonical map from the limit to the colimit is 
\begin{align}
\label{eq:vect-iso}
\begin{array}{cccc}
\pi_f \iota_f \from     &\ei{f} &\to            &X/\ek{f}       \\
                        &y      &\mapsto        &y + \ek{f}.
\end{array}
\end{align}
That this is an isomorphism means that for all $x \in X$, there is a unique
$y \in \ei{f}$ such that $x + \ek{f} = y + \ek{f}$. This is equivalent to
the statement that $X = \ei{f} \oplus \ek{f}$
(Lemma~\ref{lemma:fitting}). By definition, $y = f^\infty(x)$. Thus,
$f^\infty$ is the projection to $\ei{f}$ associated with the decomposition
$X = \ek{f} \oplus \ei{f}$. In particular, $\ek{f} = \ker(f^\infty)$.

Much as for finite sets, the chain of inclusions~\eqref{eq:nested-sets}
must stabilize after at most $\dim X$ steps, so that $\ei{f} = \im(f^{\dim
X})$. Similarly, the chain of inclusions~\eqref{eq:ker-nested} stabilizes
after at most $\dim X$ steps, so $\ek{f} = \ker(f^{\dim X})$.

\begin{remark} 
The canonical isomorphism~\eqref{eq:vect-iso} is \emph{not} the isomorphism
$\im(f^{\dim X}) \to X/\ker(f^{\dim X})$ provided by the first isomorphism
theorem.  Any nontrivial automorphism of a one-dimensional space is a
counterexample.
\end{remark}

There is a back and forth description of $f^\infty$ analogous to
Proposition~\ref{propn:set-trace}:

\begin{propn}
\label{propn:vect-trace}
Let $x \in X$. Then for all $n \geq \dim X$, we have $f^n(x) \in \ei{f}$
and $f^\infty(x) = \au{f}^{-n}(f^n(x))$.
\end{propn}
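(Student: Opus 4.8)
The plan is to deduce the proposition from Fitting's lemma (Lemma~\ref{lemma:fitting}) together with the identification, established in the discussion above, of $f^\infty$ as the projection onto $\ei{f}$ along $\ek{f}$.

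First I would fix $x \in X$ and use the direct-sum decomposition $X = \ei{f} \oplus \ek{f}$ to write $x = y + z$ with $y \in \ei{f}$ and $z \in \ek{f}$; then $f^\infty(x) = y$ by the preceding discussion.

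Next, fix $n \geq \dim X$. On one hand, it has already been observed that $\ek{f} = \ker(f^{\dim X})$, so, the nested kernels being increasing, $f^n(z) = 0$. On the other hand, $f$ restricts to the automorphism $\au{f}$ of the $f$-invariant subspace $\ei{f}$, so $f^n(y) = \au{f}^{\,n}(y) \in \ei{f}$. Adding the two contributions, $f^n(x) = \au{f}^{\,n}(y) \in \ei{f}$, which is the first assertion; applying $\au{f}^{-n}$ gives $\au{f}^{-n}(f^n(x)) = y = f^\infty(x)$, the second.

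I do not expect a real obstacle, since the substance is carried entirely by Fitting's lemma and by the earlier identification of $f^\infty$. The one point that needs care is that the bound $\dim X$ is exactly what makes $\ek{f} = \ker(f^{\dim X})$ — the chain $\ker(f^0) \sub \ker(f^1) \sub \cdots$ stabilizes after at most $\dim X$ steps — so that $f^n$ annihilates the $\ek{f}$-component for \emph{every} $n \geq \dim X$, not merely for $n$ sufficiently large. An alternative, more laborious route would mimic the proof of Proposition~\ref{propn:set-trace}, tracking $x$ through the composite $X \covby{\pi_f} X/\ek{f} \toby{\sim} \ei{f} \embby{\iota_f} X$ that computes $f^\infty$; but the Fitting-based argument is shorter, and that is the one I would give.
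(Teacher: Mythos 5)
Your proof is correct and is essentially the paper's own argument: both fix $n \geq \dim X$, write $x = y + z$ with $y = f^\infty(x) \in \ei{f}$ and $z \in \ek{f}$ via Fitting's lemma and the identification of $f^\infty$ as the projection onto $\ei{f}$, note that $f^n$ annihilates $z$ and acts as $\au{f}^{\,n}$ on $y$, and apply $\au{f}^{-n}$. The only cosmetic difference is that the paper gets $f^n(x) \in \ei{f}$ from the stabilization $\im(f^n) = \ei{f}$, whereas you read it off from $f^n(x) = \au{f}^{\,n}(y)$; the two are interchangeable.
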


\begin{proof}
Let $n \geq \dim X$. Then $\im(f^n) = \ei{f}$ and $\ker(f^n) =
\ek{f}$. Write $x = y + z$ with $y = f^\infty(x) \in \ei{f}$ and $z \in
\ek{f}$. We have $f^n(x) \in \ei{f}$ and
\[
\au{f}^{-n}(f^n(x)) 
= 
\au{f}^{-n}(f^n(y) + f^n(z))
=
\au{f}^{-n}(f^n(y))
=
y
=
f^\infty(x).
\]
\end{proof}

The proof of Proposition~\ref{propn:sets-unique-idem} used the fact that the
inverse of an automorphism $g$ of a finite set is a nonnegative power of
$g$. We will need the linear analogue. Write $\chi_g(t) = \det(g - tI)$ for
the characteristic polynomial of an operator $g$.

\begin{lemma}
\label{lemma:ch}
Let $g$ be an automorphism of a finite-dimensional vector space. Then
$g^{-1}$ is a polynomial in $g$; indeed, $g^{-1} = q(g)$ where 
\[
q(t) = \frac{\det g - \chi_g(t)}{(\det g)t} \in k[t].
\]
\end{lemma}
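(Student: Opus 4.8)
The statement is a concrete formula, so the natural approach is to verify it by direct computation using the Cayley–Hamilton theorem. Write $\chi_g(t) = \det(g - tI) = (-1)^d t^d + \cdots + c_1 t + \det g$, where $d = \dim X$ and $c_1$ is the coefficient of $t$. The constant term is $\chi_g(0) = \det g$, which is nonzero since $g$ is invertible, so division by $\det g$ is legitimate, and division by $t$ makes sense because $\det g - \chi_g(t)$ has zero constant term.

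**Key steps.** First I would apply Cayley–Hamilton: $\chi_g(g) = 0$, i.e.\ $(-1)^d g^d + \cdots + c_1 g + (\det g) I = 0$. Rearranging, $(\det g) I = -\bigl((-1)^d g^d + \cdots + c_1 g\bigr) = \det g \cdot I - \chi_g(g) + $ (nothing) — more cleanly, $(\det g) I - \chi_g(g) = (\det g) I$ since $\chi_g(g) = 0$, and the left side, viewed as a polynomial in $g$ with no constant term, equals $g \cdot p(g)$ where $p(t) = (\det g - \chi_g(t))/t \in k[t]$. Thus $(\det g) I = g \cdot p(g)$. Multiplying by $g^{-1}/\det g$ gives $g^{-1} = p(g)/\det g = q(g)$ with $q(t) = (\det g - \chi_g(t))/((\det g) t)$, exactly as claimed. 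I would also remark that $q$ has coefficients in $k$ because $\chi_g \in k[t]$ and $\det g \in k^\times$, so $q \in k[t]$ is a genuine polynomial of degree $d - 1$.

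**Main obstacle.** There is essentially no obstacle — the only thing to be careful about is the bookkeeping showing that $\det g - \chi_g(t)$ is divisible by $t$ in $k[t]$ (equivalently, has vanishing constant term), which is immediate from $\chi_g(0) = \det g$, and that $\det g \ne 0$ so that the final division is valid. The proof is a two-line consequence of Cayley–Hamilton, and I would present it as such.

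\begin{proof}
Let $d = \dim X$. The characteristic polynomial has constant term $\chi_g(0) = \det(g - 0) = \det g$, which is nonzero since $g$ is invertible. Hence $\det g - \chi_g(t)$ has vanishing constant term, so it is divisible by $t$ in $k[t]$; that is, $q(t) = \bigl(\det g - \chi_g(t)\bigr)/\bigl((\det g)\,t\bigr)$ lies in $k[t]$, and $(\det g)\,t\,q(t) = \det g - \chi_g(t)$. Substituting $g$ for $t$ and using the Cayley--Hamilton theorem, $\chi_g(g) = 0$, so
\[
(\det g)\, g\, q(g) = (\det g)\, I - \chi_g(g) = (\det g)\, I.
\]
Dividing by $\det g$ gives $g\, q(g) = I$, and since $q(g)$ commutes with $g$ we also have $q(g)\, g = I$. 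Therefore $g^{-1} = q(g)$.
\end{proof}
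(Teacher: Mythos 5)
Your proof is correct and follows essentially the same route as the paper: apply the Cayley--Hamilton theorem, note that the constant term of $\chi_g$ is $\det g \neq 0$, and rearrange to exhibit $g^{-1}$ as the stated polynomial in $g$. Your version just makes the divisibility-by-$t$ bookkeeping and the verification of the explicit formula for $q$ slightly more explicit.
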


\begin{proof}
Write $\chi_g(t) = \sum_{n \geq 0} a_n t^n$. Since $g$ is an automorphism,
$0 \neq \det g = a_0$. By the Cayley--Hamilton theorem,
\[
0 = a_0 + g \sum_{n \geq 1} a_n g^{n - 1}.
\]
Rearranging shows that
$
\frac{-1}{a_0} \sum_{n \geq 1} a_n g^{n - 1}
$
is inverse to $g$, and the result follows.
\end{proof}

\begin{propn}
$\au{f}^{-1}$ is a polynomial in $\au{f}$.  
\qed
\end{propn}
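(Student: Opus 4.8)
The plan is to obtain this as an immediate consequence of Lemma~\ref{lemma:ch}, so the only work is to check that lemma's hypotheses apply with $g = \au{f}$.

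First I would note that $\ei{f}$ is a finite-dimensional vector space: by Theorem~\ref{thm:main}\bref{part:main-lim} it is the subspace $\bigcap_{n \in \N} \im(f^n)$ of the finite-dimensional space $X$. Second, $\au{f}$ is an automorphism of $\ei{f}$; this is part of the construction of the eventual image (the lemma just before Example~\ref{eg:ei-auto}, valid in any category), and is also recorded above as part of Fitting's lemma (Lemma~\ref{lemma:fitting}).

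With these observations in place, Lemma~\ref{lemma:ch} applied to $\au{f}$ gives $\au{f}^{-1} = q(\au{f})$, where
\[
q(t) = \frac{\det \au{f} - \chi_{\au{f}}(t)}{(\det \au{f})\, t} \in k[t].
\]
There is no genuine obstacle here; the proposition is stated because it is the linear counterpart of the elementary fact about permutations of finite sets exploited in the proof of Proposition~\ref{propn:sets-unique-idem}, and it provides exactly what is needed to carry a similar analysis (for instance, of the idempotent $f^\infty$) over to $\FDVect$.
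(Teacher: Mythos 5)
Your proposal is correct and is exactly the argument the paper intends: the proposition is stated with no proof precisely because it is the instance $g = \au{f}$ of Lemma~\ref{lemma:ch}, using that $\au{f}$ is an automorphism of the finite-dimensional space $\ei{f} \sub X$. Your explicit verification of the hypotheses matches the paper's implicit reasoning.
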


\begin{propn}
\label{propn:vect-idem-span}
$f^\infty$ is a polynomial in $f$. Indeed, 
\[
f^\infty =
\biggl( 1 - \frac{\chi_{\au{f}}(f)}{\det \au{f}} \biggr)^n
\]
whenever $n \geq \dim X$, and $f^\infty \in \spn\{f, f^2, \ldots\}$ in
$\Hom(X, X)$. 
\end{propn}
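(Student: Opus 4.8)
The plan is to prove the displayed formula first, since the other two assertions drop out of it immediately. Throughout I will use Fitting's lemma (Lemma~\ref{lemma:fitting}) and the fact, recorded just before the statement, that $f^\infty$ is the projection onto $\ei{f}$ with kernel $\ek{f}$ for the decomposition $X = \ei{f} \oplus \ek{f}$.

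Write $g = 1 - \chi_{\au{f}}(f)/\det\au{f}$; this is defined since $\au{f}$ is an automorphism, so $\det\au{f} \neq 0$, and it is manifestly a polynomial in $f$. The first step is to record that $g$ is divisible by $f$. With the convention $\chi_g(t) = \det(g - tI)$ we have $\chi_{\au{f}}(0) = \det\au{f}$, so $\chi_{\au{f}}(t) - \det\au{f}$ is divisible by $t$; writing $\chi_{\au{f}}(t) = \det\au{f} - t\,s(t)$ with $s \in k[t]$ then gives $g = f\cdot\bigl(s(f)/\det\au{f}\bigr)$. (One could equally read $s$ off from the polynomial $q$ of Lemma~\ref{lemma:ch}.)

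The heart of the argument is to compute $g^n$ on the two $f$-invariant summands. On $\ei{f}$ the operator $f$ restricts to $\au{f}$, so $\chi_{\au{f}}(f)$ restricts to $\chi_{\au{f}}(\au{f})$, which is $0$ by Cayley--Hamilton; hence $g$, and therefore $g^n$, restricts to the identity on $\ei{f}$. On $\ek{f}$ the operator $f$ is nilpotent, and in fact $f^{\dim X}$ already vanishes there since $\ek{f} = \ker(f^{\dim X})$; so for every $n \geq \dim X$ the operator $g^n = f^n\bigl(s(f)/\det\au{f}\bigr)^n$ restricts to $0$ on $\ek{f}$. Thus for such $n$, $g^n$ is the identity on $\ei{f}$ and zero on $\ek{f}$, i.e.\ it is the projection onto $\ei{f}$ along $\ek{f}$, which is $f^\infty$. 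This proves the displayed identity.

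The remaining claims follow at once: $f^\infty = g^n$ is a polynomial in $f$; and since $g^n = f^n\bigl(s(f)/\det\au{f}\bigr)^n$ expands as a $k$-linear combination of $f^n, f^{n+1}, \dots$, we get $f^\infty \in \spn\{f, f^2, \ldots\}$ inside $\Hom(X,X)$. I do not expect a genuine obstacle here; the only points needing care are the constant-term bookkeeping for $\chi_{\au{f}}$ (hence the role of the convention $\chi_g(t) = \det(g - tI)$) and the degenerate cases $\ei{f} = 0$ or $\ek{f} = 0$, in which the formulas are easily checked to degenerate correctly.
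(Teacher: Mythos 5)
Your argument is correct, but it takes a genuinely different route from the paper's. The paper obtains the identity by combining two results it has already established: Lemma~\ref{lemma:ch}, which gives $\au{f}^{-1} = q(\au{f})$ with $q(t) = \bigl(\det\au{f} - \chi_{\au{f}}(t)\bigr)/\bigl((\det\au{f})\,t\bigr)$, and the back-and-forth description of Proposition~\ref{propn:vect-trace}, $f^\infty(x) = \au{f}^{-n}(f^n(x))$ for $n \geq \dim X$; substituting and setting $r(t) = t\,q(t)$ yields $f^\infty = r(f)^n$, and $r(f)$ is exactly your operator $g$. You instead verify the identity blockwise on the Fitting decomposition $X = \ei{f} \oplus \ek{f}$: Cayley--Hamilton makes $g$ restrict to the identity on $\ei{f}$, divisibility of $g$ by $f$ together with $\ek{f} = \ker(f^{\dim X})$ makes $g^n$ vanish on $\ek{f}$ for $n \geq \dim X$, and the paper has already identified $f^\infty$ as the projection onto $\ei{f}$ along $\ek{f}$, so $g^n = f^\infty$. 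The two proofs share their essential ingredients --- Cayley--Hamilton and the fact that a polynomial in $f$ restricts on $\ei{f}$ to the same polynomial in $\au{f}$ (the paper uses this in the step $q(\au{f})^n(f^n(x)) = q(f)^n(f^n(x))$) --- but they are organized differently: the paper's computation runs in parallel with the back-and-forth descriptions in the set and metric cases (Propositions~\ref{propn:set-trace} and~\ref{propn:met-trace}), reinforcing the cross-category analogy, whereas yours is a self-contained linear-algebra check that makes the mechanism (identity on the automorphism block, annihilation of the nilpotent block) completely explicit and needs neither Lemma~\ref{lemma:ch} nor Proposition~\ref{propn:vect-trace}. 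Your closing span argument, that every monomial in $g^n = f^n\bigl(s(f)/\det\au{f}\bigr)^n$ has degree at least $n \geq 1$, is the same observation the paper phrases as ``$r(t)$ has constant term $0$''.
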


\begin{proof}
By Lemma~\ref{lemma:ch}, $\au{f}^{-1} = q(\au{f})$ where
\[
q(t) = \frac{\det \au{f} - \chi_{\au{f}}(t)}{(\det \au{f})t} \in k[t].
\]
Write $r(t) = tq(t)$. Then by Proposition~\ref{propn:vect-trace}, for all
$x \in X$ and $n \geq \dim X$,
\[
f^\infty(x) = q(\au{f})^n(f^n(x)) = q(f)^n(f^n(x)) = r(f)^n(x).
\]
Finally, $f^\infty \in \spn\{f, f^2, \ldots\}$ since $r(t)$ has constant
term $0$.
\end{proof}

\begin{remark}
The proofs of Propositions~\ref{propn:vect-trace}
and~\ref{propn:vect-idem-span} can be refined to weaken the lower bound on
$n$ to $\dim\ek{f}$. 
\end{remark}

In the case of $\FDVect$, Theorem~\ref{thm:coalg} on terminal coalgebras
states that $\ei{f}$ is the largest linear subspace $W$ of $X$ satisfying
$W \sub fW$. The dual of Theorem~\ref{thm:coalg} states that $\ei{f} =
X/\ek{f}$, with $\ek{f}$ characterized as the smallest linear subspace
$U$ of $X$ satisfying $f^{-1}U \sub U$.

Call an element $x \in X$ \demph{linearly periodic} for $f$ if $x \in
\spn\{f(x), f^2(x), \ldots\}$. 

\begin{propn}
The set of linearly periodic points for $f$ is $\ei{f}$. In particular, the
set of linearly periodic points is a linear subspace of $X$.
\end{propn}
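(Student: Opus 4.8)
The plan is to establish the two inclusions $A \sub \ei{f}$ and $\ei{f} \sub A$, where $A$ denotes the set of linearly periodic points of $f$; the final sentence is then immediate, since $\ei{f}$ is a linear subspace of $X$.

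First I would prove $A \sub \ei{f}$, arguing pointwise so as not to presuppose that $A$ is a subspace. If $x \in A$, then by definition $x = \sum_{k = 1}^m a_k f^k(x)$ for some $m \geq 1$ and $a_1, \ldots, a_m \in k$. Writing $P(t) = \sum_{k = 1}^m a_k t^k \in k[t]$, this says $x = P(f)(x)$, hence $x = P(f)^n(x)$ for every $n \geq 1$. Since $P$ has zero constant term, $P(t) = t\,Q(t)$ for some $Q \in k[t]$, so $P(f)^n = f^n \of Q(f)^n$ and therefore $x = f^n\bigl(Q(f)^n(x)\bigr) \in \im(f^n)$. As $n$ is arbitrary, $x \in \bigcap_{n \in \N} \im(f^n) = \ei{f}$ by Theorem~\ref{thm:main}\bref{part:main-lim}.

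For $\ei{f} \sub A$ I would use that $\iota_f \from \ei{f} \emb X$ is the inclusion and is a map $\ento{\ei{f}}{\au{f}} \to \ento{X}{f}$ in $\Endo\C$, so that $f$ restricts on $\ei{f}$ to $\au{f}$; in particular $f^k(x) = \au{f}^{\,k}(x)$ for all $x \in \ei{f}$ and $k \geq 0$. By Lemma~\ref{lemma:ch}, $\au{f}^{-1}$ is a polynomial in $\au{f}$, say $\au{f}^{-1} = \sum_{j \geq 0} c_j \au{f}^{\,j}$ (a finite sum). Given $x \in \ei{f}$, applying $\au{f}$ to $\au{f}^{-1}(x) \in \ei{f}$ yields
\[
x = \au{f}\bigl(\au{f}^{-1}(x)\bigr) = \sum_{j \geq 0} c_j \au{f}^{\,j + 1}(x) = \sum_{j \geq 0} c_j f^{\,j + 1}(x) \in \spn\{f(x), f^2(x), \ldots\},
\]
so $x \in A$. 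Together the two inclusions give $A = \ei{f}$, and hence $A$ is a linear subspace of $X$.

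I do not expect a serious obstacle. This argument mirrors the finite-set case, where the set of periodic points equals $\ei{f}$; the one place where the linear setting genuinely differs is that there $\au{f}$ has finite order, whereas here the substitute is the polynomial expression for $\au{f}^{-1}$ supplied by Lemma~\ref{lemma:ch} (equivalently, Cayley--Hamilton). The only points needing a little care are the identification of $f|_{\ei{f}}$ with $\au{f}$ and the decision to run the inclusion $A \sub \ei{f}$ pointwise rather than via Theorem~\ref{thm:coalg}, which would require knowing in advance that $A$ is a subspace.
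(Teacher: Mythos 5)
Your proof is correct. The forward inclusion is exactly the paper's argument: from $x = P(f)(x)$ with $P$ of zero constant term, iterate to get $x \in \im(f^n)$ for all $n$, hence $x \in \bigcap_n \im(f^n) = \ei{f}$. For the converse the paper instead quotes the last part of Proposition~\ref{propn:vect-idem-span} ($f^\infty \in \spn\{f, f^2, \ldots\}$ in $\Hom(X,X)$): since $f^\infty$ acts as the identity on $\ei{f}$, any $x \in \ei{f}$ satisfies $x = f^\infty(x) \in \spn\{f(x), f^2(x), \ldots\}$. You bypass that proposition (and the back-and-forth description of $f^\infty$ in Proposition~\ref{propn:vect-trace} on which it rests) by applying Lemma~\ref{lemma:ch} directly to the automorphism $\au{f}$ of $\ei{f}$ and using $f\iota_f = \iota_f\au{f}$ to identify $f|_{\ei{f}}$ with $\au{f}$; this is a slightly more self-contained route to the same Cayley--Hamilton input, at the cost of not exhibiting the global operator identity $f^\infty \in \spn\{f, f^2, \ldots\}$ that the paper has already established and reuses. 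Your choice to run the inclusion $A \sub \ei{f}$ pointwise, rather than via the terminal-coalgebra characterization (which would presuppose $A$ is a subspace), matches the paper and is the right call.
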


\begin{proof}
Let $x$ be a linearly periodic point. Then $x = (p(f) \of f)(x)$ for some
$p(t) \in k[t]$. Hence for all $n \in \N$,
\[
x = (p(f) \of f)^n(x) = f^n(p(f)^n(x)) \in \im(f^n),
\]
giving $x \in \ei{f}$. The converse follows from the last part of
Proposition~\ref{propn:vect-idem-span}. 
\end{proof}

Recall the notion of shift equivalence from Section~\ref{sec:props}. In the
paper by Williams in which it was first introduced (\cite{WillCOD},
p.~342), the following result was proved using zeta functions. Here we give
a different proof.

\begin{propn}
Let $\ento{X}{f}$ and $\ento{Y}{g}$ be shift equivalent endomorphisms in
$\FDVect$. Then the characteristic polynomials of $f$ and $g$ are equal up
to a factor of $\pm t^p$, for some $p \in \Z$. 
\end{propn}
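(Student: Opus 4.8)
The plan is to reduce the comparison of $\chi_f$ and $\chi_g$ to a comparison of the characteristic polynomials of the automorphisms $\au{f}$ and $\au{g}$, which must coincide because shift-equivalent endomorphisms have isomorphic eventual images. The substance of the argument is thus carried entirely by Fitting's Lemma and Proposition~\ref{propn:shift-eqv}.

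First I would invoke Fitting's Lemma (Lemma~\ref{lemma:fitting}) to split $X = \ei{f} \oplus \ek{f}$ compatibly with $f$, so that $f$ is the direct sum of the automorphism $\au{f}$ on $\ei{f}$ and a nilpotent operator on $\ek{f}$. Writing $d = \dim\ek{f}$ and using that a nilpotent operator on a $d$-dimensional space has characteristic polynomial $(-t)^d$, the multiplicativity of $\chi$ over direct sums gives
\[
\chi_f(t) = (-1)^{d}\, t^{d}\, \chi_{\au{f}}(t),
\]
and moreover $\chi_{\au{f}}(0) = \det\au{f} \neq 0$ since $\au{f}$ is invertible. The identical argument applied to $g$ yields $\chi_g(t) = (-1)^{e}\, t^{e}\, \chi_{\au{g}}(t)$ with $e = \dim\ek{g}$ and $\chi_{\au{g}}(0) \neq 0$. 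So each characteristic polynomial is, up to sign, a power of $t$ times a polynomial not divisible by $t$.

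Second, recalling that every endomorphism in $\FDVect$ has eventual image duality (Theorem~\ref{thm:main}), shift equivalence of $\ento{X}{f}$ and $\ento{Y}{g}$ lets Proposition~\ref{propn:shift-eqv} supply an isomorphism $\ento{\ei{f}}{\au{f}} \iso \ento{\ei{g}}{\au{g}}$ in $\Auto{\FDVect}$; unwinding the definition, this is an invertible linear map intertwining $\au{f}$ and $\au{g}$, so the two automorphisms are conjugate and $\chi_{\au{f}} = \chi_{\au{g}}$. Substituting into the two displays above gives $t^{e}\chi_f(t) = \pm\, t^{d}\chi_g(t)$, which is the assertion with $p = d - e \in \Z$; comparing degrees moreover pins down $p = \dim X - \dim Y$. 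I do not expect a genuine obstacle here: the only points requiring a little care are the sign bookkeeping in the nilpotent factor and the routine remark that isomorphic objects of $\Auto{\FDVect}$ are conjugate automorphisms.
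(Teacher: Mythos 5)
Your proof is correct and follows essentially the same route as the paper: Fitting's decomposition gives $\chi_f(t) = \pm t^{\dim\ek{f}}\chi_{\au{f}}(t)$ (and likewise for $g$), and Proposition~\ref{propn:shift-eqv} supplies the isomorphism $\ento{\ei{f}}{\au{f}} \iso \ento{\ei{g}}{\au{g}}$, hence $\chi_{\au{f}} = \chi_{\au{g}}$ and the claim with $p = \dim\ek{f} - \dim\ek{g}$. The only difference is cosmetic: you track the signs explicitly where the paper simply writes $\pm t^i$.
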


\begin{proof}
By the decomposition in Lemma~\ref{lemma:fitting}, $\chi_f = \chi_{\au{f}}
\cdot \chi_{f_0}$, where $f_0$ is the operator $f$ restricted to
$\ek{f}$. Since $f_0$ is nilpotent, $\chi_{f_0}(t) = \pm t^i$ for some $i
\geq 0$. Hence $\chi_f(t) = \pm t^i \chi_{\au{f}}(t)$. Similarly,
$\chi_g(t) = \pm t^j \chi_{\au{g}}(t)$ for some $j \geq 0$.

By shift equivalence and Proposition~\ref{propn:shift-eqv},
$\ento{\ei{f}}{\au{f}} \iso \ento{\ei{g}}{\au{g}}$. In particular,
$\chi_{\au{f}} = \chi_{\au{g}}$, giving $\chi_f(t) = \pm t^{i - j}
\chi_g(t)$.
\end{proof}

When $k$ is algebraically closed, $X$ decomposes canonically into its
\demph{generalized eigenspaces} $\ek{f - \lambda}$: 
\[
X = \bigoplus_{\lambda \in k} \ek{f - \lambda}
\]
(\cite{AxleLAD}, Theorem~8.21). The dimension
of $\ek{f - \lambda}$ is the algebraic multiplicity of $\lambda$, taken to
be $0$ unless $\lambda$ is an eigenvalue. This decomposition refines the
earlier decomposition $X = \ei{f} \oplus \ek{f}$: 
\[
\ei{f} = \bigoplus_{0 \neq \lambda \in k} \ek{f - \lambda},
\]
providing yet another description of the eventual image.

\section{Compact metric spaces}
\label{sec:met}

Here we study the category $\CptMet$ of compact metric spaces $X = (X, d)$.
Its maps $X \to Y$ are the functions $f\from X \to Y$ that are
\demph{distance-decreasing}: $d(f(x), f(x')) \leq d(x, x')$ for all $x, x'
\in X$. Such a map is an \demph{isometry} if it is distance-preserving:
$d(f(x), f(x')) = d(x, x')$. The isomorphisms in $\CptMet$ are the
surjective isometries.

The isometries and surjections define a factorization system on
$\CptMet$. We will prove that it is of finite type.

\begin{lemma}
A self-isometry of a compact metric space is surjective.
\end{lemma}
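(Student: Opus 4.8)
The plan is to argue by contradiction, using compactness twice: once to produce a positive gap between $a$ and the image, and once to derive a contradiction from an infinite separated set. So suppose $f \from X \to X$ is a distance-preserving self-map that is \emph{not} surjective, and pick a point $a \in X \setminus f(X)$. Since $f$ is $1$-Lipschitz it is continuous, so $f(X)$ is a compact, hence closed, subset of $X$; therefore $\epsilon := d(a, f(X)) > 0$.

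Next I would consider the forward orbit $a, f(a), f^2(a), \ldots$ of $a$. The crux is a translation trick: because $f$ preserves distances, $d(f^m(a), f^n(a)) = d(a, f^{\,n-m}(a))$ for all $0 \le m \le n$, and whenever $n - m \ge 1$ the point $f^{\,n-m}(a)$ lies in $f(X)$, so this distance is at least $\epsilon$. Hence the points $f^n(a)$ with $n \in \N$ are pairwise at distance $\ge \epsilon$; in particular they are all distinct, so $\{f^n(a) : n \in \N\}$ is an infinite $\epsilon$-separated subset of $X$. But a compact metric space is totally bounded (equivalently, sequentially compact), so it contains no infinite $\epsilon$-separated set. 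This contradiction shows that $f$ is surjective.

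There is no genuinely hard step here; the only points requiring care are extracting $\epsilon > 0$, which relies on $f(X)$ being closed and so genuinely uses compactness rather than mere continuity, and the observation that distance-preservation converts every late orbit gap $d(f^m(a), f^n(a))$ into the single fixed gap $d(a, f(X))$. One may also note in passing that a distance-preserving self-map is automatically injective, consistent with the orbit points being distinct, though the $\epsilon$-separation already yields distinctness for free. Together with the fact that the isomorphisms of $\CptMet$ are precisely the surjective isometries, this lemma is what makes the embedding half of axiom~\axI\ hold for the isometry--surjection factorization system.
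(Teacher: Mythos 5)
Your proof is correct, but it takes a different route from the paper. You argue by contradiction via the orbit of a missed point: if $a \notin f(X)$, then since $f(X)$ is compact and hence closed, $\epsilon = d(a, f(X)) > 0$, and distance-preservation turns every gap $d(f^m(a), f^n(a))$ with $m < n$ into $d(a, f^{n-m}(a)) \geq \epsilon$, so the forward orbit is an infinite $\epsilon$-separated set, which total boundedness forbids. The paper instead isolates the packing number $N_\epsilon(X)$ (the maximal cardinality of an $\epsilon$-separated subset) as an isometry invariant: it shows that a compact proper subspace $Y \subseteq X$ has $N_\epsilon(Y) < N_\epsilon(X)$ for some $\epsilon$ (add a point of $X \setminus Y$ at positive distance from $Y$ to a maximal separated set in $Y$), while a self-isometry gives $X \cong fX$ and hence $N_\epsilon(X) = N_\epsilon(fX)$ for all $\epsilon$, forcing $fX = X$. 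Both proofs rest on the same finiteness principle --- compactness bounds $\epsilon$-separated sets --- but your translation trick along the orbit plays no role in the paper's argument, and the paper's invariance-of-$N_\epsilon$ idea plays no role in yours. Your version is a direct, self-contained contradiction; the paper's packs the content into the cleaner statement that a compact metric space is never isometric to a proper compact subspace of itself, with the numerical invariant doing the work. One cosmetic point: your parenthetical ``totally bounded (equivalently, sequentially compact)'' is loose --- total boundedness alone is not equivalent to sequential compactness --- but the fact you actually use, that a compact metric space contains no infinite $\epsilon$-separated set, is correct either way.
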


This result is classical (Theorem~1.6.14 of~\cite{BBI}), but we give
the short proof.

\begin{proof}
For $\epsln > 0$ and compact $X$, let $N_\epsln(X)$ be the maximal
cardinality of a subset of $X$ that is \demph{$\epsln$-separated}: distinct
points are at least $\epsln$ apart. We show that whenever $Y$ is a compact
proper subspace of $X$, there is some $\epsln > 0$ such that $N_\epsln(Y) <
N_\epsln(X)$. The result follows: for if $\ento{X}{f}$ is an isometry then
$X \iso fX$, so $N_\epsln(X) = N_\epsln(fX)$ for all $\epsln$, so $fX = X$.

Choose $x \in X$ and $\epsln > 0$ such that the ball $B(x,
\epsln)$ is disjoint from $Y$. Choose an $\epsln$-separated set $S$ in $Y$
of cardinality $N_\epsln(Y)$. Then $S \cup \{x\}$ is an $\epsln$-separated
set in $X$, proving that $N_\epsln(Y) < N_\epsln(X)$. 
\end{proof}

This proves axiom~\axI. Axiom~\axIs\ is also standard (Theorem~1.6.15(1)
of~\cite{BBI}), but we give a categorical proof that may have further
applications.

\begin{lemma}
\label{lemma:closed}
Let $\C$ be a category with a factorization system and a closed structure
$([-, -], I)$. Suppose that for all coverings $f \from X \cov Y$ and
objects $Z$, the map $[f, Z] \from [Y, Z] \to [X, Z]$ is an
embedding. Suppose also that every split monic covering is an
isomorphism. Then axiom~\axI\ implies axiom~\axIs.
\end{lemma}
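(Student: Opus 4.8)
The plan is to push a covering endomorphism through the internal-hom functor $[-, X]$ and read off a one-sided inverse. First, given a covering $f \from X \cov X$, observe that $[f, X]$ is an \emph{endomorphism} of the object $[X, X]$, since $[-, X]$ is contravariant in its first variable; by the first hypothesis this endomorphism is an embedding, so axiom~\axI\ forces $[f, X] \from [X, X] \to [X, X]$ to be an isomorphism.

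Next I would decode this. In a closed category the representable functor $\C(I, -)$ identifies the object $[X, X]$ with the hom-set $\C(X, X)$ and identifies the morphism $[f, X]$ with the precomposition map $(-) \of f \from \C(X, X) \to \C(X, X)$. Applying $\C(I, -)$ to the isomorphism $[f, X]$ therefore shows that $(-) \of f$ is a bijection of $\C(X, X)$; in particular it is surjective, so there is a map $g \from X \to X$ with $g \of f = 1_X$. Hence $f$ is split monic. Since $f$ is also a covering, the second hypothesis gives that $f$ is an isomorphism --- which is exactly what axiom~\axIs\ claims for $f$.

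The one point needing care is the decoding step: one must check that ``$[f, X]$ is an isomorphism in $\C$'' genuinely translates into ``precomposition with $f$ is bijective on $\C(X, X)$''. This is a routine consequence of the closed-category axioms --- the underlying-set functor $\C(I, -)$ sends each $[g, Z]$ to $(-) \of g$ --- and it is the only place the closed structure enters; everything else is formal. In fact one could weaken axiom~\axI\ here, since only surjectivity of $(-) \of f$ is used: it would suffice that an embedding endomorphism of $[X, X]$ be split epic.
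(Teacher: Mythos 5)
Your proof is correct and is essentially the paper's own argument: apply axiom~\axI\ to the embedding endomorphism $[f, X]$ of $[X, X]$, use the natural isomorphism $\C(X, Y) \iso \C(I, [X, Y])$ (under which $[f, X]$ corresponds to precomposition with $f$) to extract $g$ with $g \of f = 1_X$, and conclude via the hypothesis on split monic coverings. The paper just makes your ``decoding step'' explicit by defining $\mapname{g} = [f, X]^{-1} \of \mapname{1_X}$ and invoking naturality to get $\mapname{g \of f} = \mapname{1_X}$.
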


As in Eilenberg and Kelly~\cite{EiKe}, a \demph{closed structure} on $\C$
consists of a functor $[-, -]\from \C^\op \times \C \to \C$ and an object
$I \in \C$ satisfying axioms. For example, $\C$ carries a closed structure
if it is monoidal closed. The axioms on a closed structure imply that
\begin{align}
\label{eq:closed-nat}
\C(X, Y) \iso \C(I, [X, Y])
\end{align}
naturally in $X, Y \in \C$. We write this isomorphism as $f \mapsto
\mapname{f}$.

\begin{proof}
Suppose that axiom~\axI\ holds, and let $f \from X \to X$ be a covering.
Then the endomorphism $[f, X]$ of $[X, X]$ is an embedding, hence, by
axiom~\axI, an isomorphism. There is a unique map $g \from X \to X$
such that
\[
\mapname{g} 
= 
\Bigl( I \toby{\mapname{1_X}} [X, X] \toby{[f, X]^{-1}} [X, X] \Bigr).
\]
The naturality of the isomorphism~\eqref{eq:closed-nat} and the definition
of $g$ give
\[
\mapname{g \of f} = [f, X] \of \mapname{g} = \mapname{1_X},
\]
so $g \of f = 1_X$. But then $f$ is a split monic covering, and,
therefore, an isomorphism.
\end{proof}

As is well known, the category of metric spaces (allowing $\infty$ as a
distance) and distance-decreasing maps has the following symmetric monoidal
closed structure. The tensor product $X \otimes Y$ is the cartesian product
with distances defined by adding the distances in $X$ and $Y$. The unit
object $I$ is the one-point space. The function space $[X, Y]$ is the set
of distance-decreasing maps $X \to Y$ with metric $d_\infty(f, g) = \sup_{x
\in X} d(f(x), g(x))$. (Its underlying topology is that of uniform
convergence.) Moreover, this symmetric monoidal closed structure restricts
to one on $\CptMet$.

The hypotheses of Lemma~\ref{lemma:closed} are easily verified, so
axiom~\axIs\ holds in $\CptMet$. For future use, we also note that the
monoidal closed structure gives a composition map
\begin{align}
\label{eq:comp-met}
[Y, Z] \otimes [X, Y] \to [X, Z]
\end{align}
in $\CptMet$ for each $X$, $Y$ and $Z$. In particular, composition is
continuous with respect to the product topology on the domain
of~\eqref{eq:comp-met}. 

\begin{lemma}
The isometries and surjections in $\CptMet$ define a factorization system
of finite type. 
\end{lemma}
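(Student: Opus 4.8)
The plan is to first dispatch the factorization-system part, then verify the remaining four axioms, since axioms $\axI$ and $\axIs$ were established above. To factor a map $f \from X \to Y$ in $\CptMet$, I would take the image $f(X)$ with the metric induced from $Y$: this is compact, being the continuous image of a compact space, the corestriction $X \cov f(X)$ is a surjection, and the inclusion $f(X) \emb Y$ is an isometry, so existence of factorizations is immediate. For uniqueness up to unique isomorphism, equivalently the orthogonality condition, I would consider a commutative square with a covering $e \from W \cov V$ on the left, an embedding $m \from X \emb Y$ on the right, and top and bottom maps $a, b$. The diagonal $d \from V \to X$ is forced to send $v$ to $a(w)$ for any $w \in e^{-1}(v)$; this is well-defined and satisfies $de = a$ because $m$ is injective, it satisfies $md = b$ by construction, and it is distance-decreasing because $m$ is distance-preserving, so $d_X(d(v), d(v')) = d_Y(md(v), md(v')) = d_Y(b(v), b(v')) \leq d_V(v, v')$; uniqueness of the diagonal follows again from injectivity of $m$.

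For axiom $\axII$, I would use that an isometry between compact spaces has closed image, so a sequence $\cdots \emb X_1 \emb X_0$ may be identified with a nested sequence of closed subspaces of $X_0$; its limit is the intersection $\bigcap_n X_n$, a closed and hence compact subspace of $X_0$, and the universal property is immediate because all the transition maps are inclusions. For axiom $\axIII$, with a ladder as in that axiom, identify $\lim X_n$ with $\bigcap_n X_n \sub X_0$ and $\lim Y_n$ with $\bigcap_n Y_n \sub Y_0$, so that the induced map is the restriction of the covering $X_0 \cov Y_0$. Given $y \in \bigcap_n Y_n$, the fibres $F_n = \{x \in X_n : g_n(x) = y\}$, where $g_n \from X_n \cov Y_n$ is the $n$th vertical map, are nonempty by surjectivity, closed hence compact in $X_0$, and nested downwards since commutativity of the ladder gives $g_m|_{X_{m+1}} = g_{m+1}$; by Cantor's intersection theorem $\bigcap_n F_n \neq \emptyset$, and any point of it lies in $\bigcap_n X_n$ and maps to $y$, so the induced map is a covering.

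For the dual axiom $\axIIs$, given a sequence of coverings $X_0 \cov X_1 \cov \cdots$, I would put the pseudometric $\bar d(x, x') = \inf_n d_{X_n}(\bar x_n, \bar x'_n)$ on $X_0$, where $\bar x_n$ denotes the image of $x$ in $X_n$ and the infimum is a decreasing limit because the transition maps are distance-decreasing, and define $\colim X_n$ to be the completion of the pseudometric space $(X_0, \bar d)$. This is totally bounded, being a metric quotient of the totally bounded space $X_0$, and complete by construction, hence compact. The $n$th coprojection sends $x_n \in X_n$ to the class of any of its preimages in $X_0$, using surjectivity of the composite $X_0 \cov X_n$; well-definedness and the universal property against a compact metric space $Z$ follow by a routine check, using that the coprojection from $X_0$ has dense image in the colimit and that distance-decreasing maps into complete spaces extend uniquely over completions. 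Axiom $\axIIIs$ is then easy: for a ladder of the relevant shape, the induced map $\colim Y_n \to \colim X_n$ restricts, on the dense subspace coming from $Y_0$, to the evident map of classes, and this is distance-preserving because each $Y_n \emb X_n$ is an isometry, so $d_{X_n}$ and $d_{Y_n}$ agree on $Y_n$ and the infima defining $\bar d^X$ and $\bar d^Y$ coincide; a distance-preserving map on a dense subspace of a complete space is distance-preserving, hence an embedding.

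The main obstacle I anticipate is axiom $\axIIs$: unlike the limit side, where everything happens inside a single fixed compact space, the colimit genuinely requires forming a metric quotient and then a completion, and one has to check that the resulting space is compact and carries the correct universal cocone. Once that construction is in place, axiom $\axIIIs$ is essentially formal, and the other axioms are either already proved or reduce to standard facts about nested compact sets.
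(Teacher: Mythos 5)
Your proof is correct and takes essentially the same route as the paper: limits of embeddings are intersections of closed subspaces, surjectivity in \axIII\ comes from compactness, the colimit in \axIIs\ is the quotient of $X_0$ by the pseudometric $\inf_n d(f^n x, f^n x')$, and \axIIIs\ is the same computation showing the infima agree. The only divergences are cosmetic: you verify the factorization-system axioms themselves (which the paper takes as given), you use nested compact fibres rather than a convergent subsequence of chosen preimages for \axIII, and the completion step in your \axIIs\ construction is redundant---the metric quotient is already the image of the compact space $X_0$ under a distance-decreasing surjection, hence compact and complete, which is how the paper sidesteps the obstacle you flag.
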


\begin{proof}
We have already proved axioms~\axI\ and~\axIs. For axiom~\axII, a diagram
\[
\cdots \emb X_1 \emb X_0
\]
in $\CptMet$ is essentially a nested sequence of closed subspaces $X_n$ of
$X_0$, and the limit is $\bigcap_{n \in \N} X_n$.

For axiom~\axIII, consider a map of diagrams
\[
\xymatrix{ 
\cdots  \ar@{ >->}[r]                           &
X_1 \ar@{ >->}[r] \ar@{->>}[d]^{u_1}      &
X_0 \ar@{->>}[d]^{u_0}  \\
\cdots  \ar@{ >->}[r]    &
Y_1 \ar@{ >->}[r]     &
Y_0 
}
\]
in $\CptMet$. Regarding $X_n$ and $Y_n$ as subspaces of $X_0$ and $Y_0$
respectively, the induced map $u \from \bigcap X_n \to \bigcap Y_n$ on
limits is the restriction of $u_0$. To show that $u$ is surjective, let $y
\in \bigcap Y_n$. For each $n \in \N$, choose $x_n \in u_n^{-1}(y)$, as
we may since $u_n$ is surjective. The sequence $(x_n)$ in $X_0$ has a
subsequence converging to $x$, say, and then $x \in \bigcap X_n$ with $u(x)
= y$. 

For axiom~\axIIs, consider a diagram
\[
X_0 \covby{f_1} X_1 \covby{f_2} \cdots
\]
in $\CptMet$, and write $f^n = f_n \of \cdots \of f_1$. The colimit of the
diagram is $X_0/{\sim}$, where $x \sim x'$ if $\inf_n d(f^n(x), f^n(x')) =
0$. The metric on $X_0/{\sim}$ is given by 
\begin{align}
\label{eq:qt-met}
d([x], [x']) = \inf_n d(f^n(x), f^n(x')), 
\end{align}
where $x, x' \in X_0$ and $[\,\cdot\,]$ denotes equivalence class. The
coprojection $X_n \to X_0/{\sim}$ is determined by $f^n(x) \mapsto [x]$ for
all $x \in X_0$.

Finally, for axiom~\axIIIs, consider a map of diagrams
\[
\xymatrix{ 
Y_0 \ar@{->>}[r]^{g_1} \ar@{ >->}[d]^{u_0}           &
Y_1 \ar@{->>}[r]^{g_2} \ar@{ >->}[d]^{u_1}           &
\cdots                  \\
X_0 \ar@{->>}[r]_{f_1}        &
X_1 \ar@{->>}[r]_{f_2}        &
\cdots.
}
\]
Write $u\from Y_0/{\sim} \to X_0/{\sim}$ for the induced map on colimits,
given on $y \in Y_0$ by $u([y]) = [u_0(y)]$. Then for all $y, y' \in Y_0$,
\begin{align*}
d(u([y]), u([y']))      &
=
d([u_0(y)], [u_0(y')])  \\
&
=
\inf_n d(f^n u_0(y), f^n u_0(y'))       \\
&
=
\inf_n d(u_n g^n(y), u_n g^n(y'))       \\
&
=
\inf_n d(g^n(y), g^n(y'))       \\
&
=
d([y], [y']),
\end{align*}
as required.
\end{proof}

It follows from Theorem~\ref{thm:main} that every endomorphism of a compact
metric space has eventual image duality.

\begin{example}
\label{eg:not-abs}
Here we show that eventual image duality is not absolute. That is, we
construct a functor $F\from \C \to \cat{D}$ and an endomorphism
$\ento{X}{f}$ in $\C$ such that $f$ has eventual image duality but $F(f)$
does not. 

Define $\ento{[0, 1]}{f}$ in $\CptMet$ by $f(x) = x/2$. Let $\Met$ be the
category of all metric spaces and distance-decreasing maps.  Define $C\from
\CptMet^\op \to \Met$ on objects by taking $C(X)$ to be the set of
continuous functions $X \to \R$ with the sup metric, and on maps by
composition. Certainly $f$ has eventual image duality, but we show that the
canonical map from the limit of the diagram 
\begin{align}
\label{eq:not-abs}
\cdots \toby{C(f)} C[0, 1] \toby{C(f)} C[0, 1] \toby{C(f)} \cdots
\end{align}
to its colimit is not injective, so that $C(f)$ does not have eventual image
duality.

For $t \in \R$, let $\theta_t \in C[0, 1]$ denote the function $x \mapsto
tx$. Then $(\theta_{2^{-n}})_{n \in \Z}$ and $(\theta_0)_{n \in \Z}$ are
both elements of the limit of~\eqref{eq:not-abs}. Their $0$th projections
are $\theta_1, \theta_0 \in C[0, 1]$. On the other hand, it is
straightforward to show that two elements $\phi, \psi \in C[0, 1]$
represent the same element of the colimit if and only if $\phi(0) =
\psi(0)$. Since $\theta_1(0) = 0 = \theta_0(0)$, the two elements
$(\theta_{2^{-n}})$, $(\theta_0)$ of the limit map to the same element of
the colimit.
\end{example}

For the rest of this section, let $\ento{X}{f}$ be an endomorphism in
$\CptMet$. 

Theorem~\ref{thm:main}\bref{part:main-lim} shows that $\ei{f} = \bigcap_{n
\in \N} \im(f^n)$ and that $\iota_f \from \ei{f} \emb X$ is the
inclusion. On the other hand, Theorem~\ref{thm:main}\bref{part:main-colim}
shows that $\ei{f} = X/{\sim}$, where
\[
x \sim x'
\iff
\inf_{n \in \N} d(f^n(x), f^n(x')) = 0
\]
and the metric on $X/{\sim}$ is defined as in~\eqref{eq:qt-met}, and that
$\pi_f \from X \cov \ei{f}$ is the quotient map $X \to X/{\sim}$.

By Theorem~\ref{thm:main}, the map
\[
\pi_f \iota_f \from \bigcap_{n \in \N} \im(f^n) \to X/{\sim}
\]
is an isomorphism. For $\pi_f \iota_f$ to be a bijection means that for
all $x \in X$, there is a unique $y \in \bigcap \im(f^n)$ such that $\inf_n
d(f^n(x), f^n(y)) = 0$. Identifying $\bigcap \im(f^n)$ with $X/{\sim}$ via
$\pi_f \iota_f$, this element $y$ is $\iota_f\pi_f(x) =
f^\infty(x)$. Thus, $f^\infty(x)$ is the unique element of $\bigcap
\im(f^n)$ satisfying
\[
\inf_{n \in \N} d\bigl(f^n(x), f^n(f^\infty(x))\bigr) = 0,
\]
or equivalently
\begin{align}
\label{eq:met-idem-lim}
\lim_{n \to \infty} d\bigl(f^n(x), f^n(f^\infty(x))\bigr) = 0.
\end{align}

The back and forth description of $f^\infty(x)$ for sets and vector spaces
(Propositions~\ref{propn:set-trace} and~\ref{propn:vect-trace}) has the
following metric analogue.

\begin{propn}
\label{propn:met-trace}
Let $x \in X$. Let $(f^{n_i}(x))$ be a convergent subsequence of
$(f^n(x))$, with limit $z$. Then $z \in \ei{f}$ and
$f^\infty(x) = \lim\limits_{i \to \infty} \au{f}^{-n_i}(z)$. 
\end{propn}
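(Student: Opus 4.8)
The plan is to exploit the two dual descriptions of $\ei{f}$ already established. First I would use that $z \in \ei{f}$: since $\ei{f} = \bigcap_{n \in \N} \im(f^n)$ and this intersection is a closed subset of $X$ (being an intersection of compact subsets), and each $f^{n_i}(x) \in \im(f^{n_i}) \subseteq \im(f^m)$ for all $n_i \geq m$, the limit $z$ lies in $\im(f^m)$ for every $m$, hence in $\ei{f}$. This is the easy part.

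The substance is the identity $f^\infty(x) = \lim_i \au{f}^{-n_i}(z)$. The key tool is the characterization \eqref{eq:met-idem-lim}: $f^\infty(x)$ is the unique element of $\ei{f}$ with $\lim_{n\to\infty} d(f^n(x), f^n(f^\infty(x))) = 0$. I would first argue that the sequence $\au{f}^{-n_i}(z)$ has a convergent subsequence (by compactness of $\ei{f}$, which is a closed subset of the compact space $X$), say with limit $w \in \ei{f}$; it then suffices to show $w = f^\infty(x)$, and by uniqueness in \eqref{eq:met-idem-lim} it is enough to check that $d(f^n(x), f^n(w)) \to 0$ as $n \to \infty$. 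The natural route is: for fixed large $N$, estimate $d(f^N(x), f^N(w))$. Using distance-decreasingness, $d(f^N(w), f^N(\au{f}^{-n_i}(z))) \leq d(w, \au{f}^{-n_i}(z))$, which is small for $i$ large (along the chosen subsequence). And $f^N(\au{f}^{-n_i}(z)) = \au{f}^{N - n_i}(z)$ since $\au{f}$ is the restriction of $f$ to $\ei{f}$ (Example~\ref{eg:ei-coalg}), and for $n_i \geq N$ this equals $\au{f}^{-(n_i - N)}(z)$. Meanwhile $f^N(x)$ is being compared with $f^{n_i}(x)$ pushed back: one expects $d(f^N(x), \au{f}^{-(n_i-N)}(z))$ to be controlled because $f^{n_i - N}$ applied to $\au{f}^{-(n_i - N)}(z)$ gives $z$, which is close to $f^{n_i}(x) = f^{n_i - N}(f^N(x))$, and $f^{n_i - N}$ is distance-decreasing --- but that is the wrong direction, so one must be more careful.

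I expect the main obstacle to be precisely this last point: distance-decreasing maps only give inequalities in one direction, so knowing $f^{m}(a)$ is close to $f^{m}(b)$ does not directly bound $d(a,b)$. The way around it is to use the automorphism $\au{f}$ on $\ei{f}$, whose inverse $\au{f}^{-1}$ is \emph{also} distance-decreasing (indeed $\au{f}$ is a surjective self-isometry of the compact space $\ei{f}$, since $\au{f}$ is an isomorphism in $\CptMet$), so $\au{f}$ acts by isometries on $\ei{f}$. Thus for the relevant estimate I would work entirely inside $\ei{f}$ once a point is pushed there, converting the problematic backward inequalities into equalities. Concretely: fix $\epsilon > 0$; choose $i$ large so that $d(z, f^{n_i}(x)) < \epsilon$ and $d(w, \au{f}^{-n_i}(z)) < \epsilon$ and $n_i \geq N$; then $d(f^N(w), \au{f}^{N - n_i}(z)) \leq \epsilon$, while $d(\au{f}^{N-n_i}(z), \au{f}^{N-n_i}(f^{n_i}(x)))$ --- wait, $f^{n_i}(x)$ need not lie in $\ei{f}$, so instead compare using that $\au{f}^{N - n_i}(z) = \au{f}^{N-n_i}(z)$ is the unique preimage and $d(\au{f}^{-(n_i-N)}(z), f^N(x))$ can be bounded once we know $f^N(x)$ maps close to $z$ under $f^{n_i - N}$ \emph{and} exploit that on the compact invariant set the dynamics is equicontinuous in both directions. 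This forces me to invoke, or reprove in miniature, the standard fact that on $\ei{f}$ the family $\{\au{f}^{\pm n}\}$ is equicontinuous (equivalently, that $\au{f}$ generates a compact group action), which is what ultimately makes the backward limit well-defined and independent of the subsequence. Once that is in hand, the displayed limit identity and its independence of the chosen subsequence $(n_i)$ follow, completing the proof.
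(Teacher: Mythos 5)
Your first claim ($z \in \ei{f}$) is fine and matches the paper. But the second half has a genuine gap at exactly the point you flag yourself. You reduce to showing that a subsequential limit $w$ of $(\au{f}^{-n_i}(z))$ satisfies $\lim_{N \to \infty} d(f^N(x), f^N(w)) = 0$, so that the uniqueness in~\eqref{eq:met-idem-lim} gives $w = f^\infty(x)$. Your ingredients control this distance only along the subsequence $(n_i)$; for a general large $N$ you try to pull the estimate $d(f^{n_i}(x), z) < \epsilon$ back through $f^{n_i - N}$, correctly observe that distance-decreasingness points the wrong way, and then appeal to equicontinuity of $\{\au{f}^{\pm n}\}$ on $\ei{f}$. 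That appeal does not close the gap: the maps $\au{f}^{\pm n}$ are isometries of $\ei{f}$, so their equicontinuity is trivial and says nothing about the points $f^N(x)$, which need not lie in $\ei{f}$. The closing sentence (``once that is in hand, \dots\ follow'') is an assertion, not an argument, and it is precisely the crux that remains unproved.

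There are two ways to repair it. Your route can be saved by one observation you did not make: $N \mapsto d(f^N(x), f^N(w))$ is non-increasing, since $f$ is distance-decreasing, so it suffices to make it small along the subsequence; there,
\[
d\bigl(f^{n_i}(x), f^{n_i}(w)\bigr) \le d\bigl(f^{n_i}(x), z\bigr) + d\bigl(z, \au{f}^{\,n_i}(w)\bigr)
= d\bigl(f^{n_i}(x), z\bigr) + d\bigl(\au{f}^{-n_i}(z), w\bigr),
\]
using that $\au{f}^{\,n_i}$ is an isometry of $\ei{f}$; then every subsequential limit of $(\au{f}^{-n_i}(z))$ equals $f^\infty(x)$, and compactness gives convergence of the whole sequence. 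The paper avoids all of this machinery: since $f^\infty(x) \in \ei{f}$ and $f^{n_i}$ restricts to the isometry $\au{f}^{\,n_i}$ on $\ei{f}$,
\[
d\bigl(\au{f}^{-n_i}(z), f^\infty(x)\bigr)
= d\bigl(z, f^{n_i}(f^\infty(x))\bigr)
\le d\bigl(z, f^{n_i}(x)\bigr) + d\bigl(f^{n_i}(x), f^{n_i}(f^\infty(x))\bigr),
\]
and both terms tend to $0$, the first by the choice of $z$ and the second by~\eqref{eq:met-idem-lim}. No subsequential limits of $(\au{f}^{-n_i}(z))$ and no uniqueness argument are needed; the isometry property you already had in hand is used once, directly on $f^\infty(x)$ rather than on an auxiliary limit point $w$.
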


\begin{proof}
First, $z \in \bigcap_{i \in \N} \im(f^{n_i}) = \ei{f}$. Now for all $i \in
\N$, 
\begin{align*}
d\bigl(\au{f}^{-n_i}(z), f^\infty(x)\bigr)      &
=
d\bigl(z, f^{n_i}(f^\infty(x))\bigr)    \\
&
\leq
d\bigl(z, f^{n_i}(x)\bigr) + d\bigl(f^{n_i}(x), f^{n_i}(f^\infty(x))\bigr).
\end{align*}
From the fact that $z = \lim_{i \to \infty} f^{n_i}(x)$ and
equation~\eqref{eq:met-idem-lim}, it follows that $\au{f}^{-n_i}(z) \to
f^\infty(x)$ as $i \to \infty$. 
\end{proof}

In Proposition~\ref{propn:met-trace}, $f^\infty$ is constructed as a
\emph{pointwise} limit. We now construct $f^\infty$ as a \emph{uniform}
limit. Recall that our function spaces $[X, Y]$ have the topology of
uniform convergence.

\begin{propn}
\label{propn:met-trace-ufm}
Let $(f^{n_i})$ be a convergent subsequence of $(f^n)$ in $[X, X]$, with
limit $h$. Then $\im(h) \sub \ei{f}$ and, writing $h = \bigl( X \toby{h'}
\ei{f} \embby{\iota_f} X \bigr)$, we have $f^\infty = \lim\limits_{i \to
\infty} \iota_f \of \au{f}^{-n_i} \of h'$ in $[X, X]$.
\end{propn}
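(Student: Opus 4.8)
The plan is to prove the statement in three stages. First, I would check that $\im(h) \sub \ei{f}$, so that the factorization $h = \iota_f \of h'$ makes sense. Second, I would observe that the maps $g_i := \iota_f \of \au{f}^{-n_i} \of h'$ converge \emph{pointwise} to $f^\infty$, essentially directly from Proposition~\ref{propn:met-trace}. Third --- the substantial step --- I would upgrade this pointwise convergence to \emph{uniform} convergence by an Arzel\`a--Ascoli compactness argument.

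For the first stage, recall from Theorem~\ref{thm:main}\bref{part:main-lim} that $\ei{f} = \bigcap_{n \in \N} \im(f^n)$, and that the subspaces $\im(f^n)$ form a decreasing chain of closed subsets of the compact space $X$. Since uniform convergence $f^{n_i} \to h$ entails pointwise convergence, for each $x \in X$ and each $m \in \N$ we have $f^{n_i}(x) \in \im(f^m)$ once $n_i \geq m$, hence $h(x) \in \im(f^m)$; intersecting over $m$ gives $h(x) \in \ei{f}$. So $h$ factors as $\iota_f \of h'$ for a unique $h' \from X \to \ei{f}$, and $h'$ is distance-decreasing because $\iota_f$ is an isometry and $h$ is distance-decreasing; in particular $h'$ is a map in $\CptMet$. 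Moreover $\au{f}^{-1}$ is a surjective isometry, $\au{f}$ being an automorphism of $\ei{f}$ in $\CptMet$, so each $g_i = \iota_f \of \au{f}^{-n_i} \of h'$ is a composite of $1$-Lipschitz maps and thus lies in $[X, X]$.

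For the second stage, fix $x \in X$ and treat $\ei{f}$ as a subspace of $X$ as usual. Then $(f^{n_i}(x))$ is a convergent subsequence of $(f^n(x))$ with limit $z := h(x) \in \ei{f}$, so Proposition~\ref{propn:met-trace} applies and gives $f^\infty(x) = \lim_i \au{f}^{-n_i}(z) = \lim_i g_i(x)$. Hence $g_i \to f^\infty$ pointwise.

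For the third stage, observe that $[X, X]$ consists of $1$-Lipschitz maps from the compact space $X$ to itself, hence is an equicontinuous family, and so by Arzel\`a--Ascoli is a \emph{compact} metric space in its uniform metric; also $f^\infty \in [X, X]$, being a map in $\CptMet$. Thus $(g_i)$ is a sequence in a compact metric space converging pointwise to $f^\infty$: any subsequence of $(g_i)$ has, by compactness, a uniformly convergent sub-subsequence, whose limit must agree with the pointwise limit $f^\infty$; by the standard subsequence criterion in a metric space, $g_i \to f^\infty$ in $[X, X]$, i.e.\ uniformly, as claimed. I expect the main obstacle to be bookkeeping rather than depth: one must keep the identification $\ei{f} \sub X$ consistent throughout so that $g_i$ is manifestly $1$-Lipschitz on all of $X$, and must recognize that the right compact space in which to place the $g_i$ is $[X, X]$ itself (which Arzel\`a--Ascoli makes compact), not merely the orbit $\{f^n : n \in \N\}$ that supplies the hypothesis.
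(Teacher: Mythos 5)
Your proposal is correct, and its first two stages (checking $\im(h) \sub \ei{f}$ and deducing pointwise convergence of $g_i = \iota_f \of \au{f}^{-n_i} \of h'$ to $f^\infty$ from Proposition~\ref{propn:met-trace}) coincide with the paper's proof. Where you diverge is in the upgrade from pointwise to uniform convergence. The paper uses the compactness of $[X,X]$ only through completeness: it shows directly that the sequence is Cauchy in $[X,X]$, via the explicit estimate $d_\infty\bigl(\iota_f \of \au{f}^{-n_i} \of h', \iota_f \of \au{f}^{-n_j} \of h'\bigr) = d_\infty\bigl(h, f^{n_i - n_j} \of h\bigr) \leq d_\infty(h, f^{n_i}) + d_\infty(f^{n_j}, h)$, which exploits that $\iota_f$ and $\au{f}$ are isometries and that $f$ is distance-decreasing; the pointwise limit then identifies the uniform limit as $f^\infty$. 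You instead use sequential compactness of $[X,X]$ (via Arzel\`a--Ascoli) together with the subsequence-of-subsequences criterion: every subsequence of $(g_i)$ has a uniformly convergent sub-subsequence, whose limit must be the pointwise limit $f^\infty$, so $g_i \to f^\infty$ in $[X,X]$. This is a sound soft argument, and it is consistent with the paper's framework, which already invokes compactness of $[X,X]$ to extract the convergent subsequence $(f^{n_i})$ in the first place. What the paper's route buys is a quantitative, self-contained estimate that makes visible exactly which structural features (isometry of $\au{f}$, nonexpansiveness of $f$) drive the uniform convergence; what your route buys is brevity and the avoidance of any computation, at the cost of leaning on the identification of cluster points through uniqueness of pointwise limits.
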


Since $[X, X]$ is compact, $(f^n)$ does have a convergent subsequence. 

\begin{proof}
By Proposition~\ref{propn:met-trace}, $\im(h) \sub \ei{f}$
and $\iota_f \of \au{f}^{-n_i}\of h'$ converges to $f^\infty$
pointwise. So it suffices to show that $\iota_f \of \au{f}^{-n_i} \of h'$
converges uniformly, that is, in $[X, X]$. Since $[X, X]$ is compact, it is
complete, so we need only show that this sequence is Cauchy. And indeed,
for all $i \geq j \geq 0$,
\begin{align*}
d_\infty\bigl(\iota_f \of \au{f}^{-n_i} \of h', 
\iota_f \of \au{f}^{-n_j} \of h'\bigr)       &
=
d_\infty\bigl(\au{f}^{-n_i} \of h', \au{f}^{-n_j} \of h'\bigr)    \\
&
=
d_\infty\bigl(h', \au{f}^{n_i - n_j} \of h'\bigr) \\
&
=
d_\infty(h, f^{n_i - n_j} \of h)        \\
&
\leq
d_\infty(h, f^{n_i}) 
+ d_\infty(f^{n_i - n_j} \of f^{n_j}, f^{n_i - n_j} \of h)      \\
&
\leq
d_\infty(h, f^{n_i}) + d_\infty(f^{n_j}, h),
\end{align*}
giving the result. 
\end{proof}

Lemma~\ref{lemma:ch} implies that the inverse of a linear automorphism $g$
belongs to $\spn\{1, g, g^2, \ldots\}$. The metric analogue is as follows.

\begin{lemma}
\label{lemma:met-ch}
Let $g$ be an automorphism of a compact metric space $Y$. Then $g^{-1} \in
\Cl\{1_Y, g, g^2, \ldots\}$, where $\Cl$ is the closure operator on $[Y, Y]$.
\end{lemma}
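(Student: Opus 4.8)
The plan is to exploit the compactness of the function space $[Y,Y]$, noted above, together with the simple observation that composition with any self-isometry of $Y$ is an isometry of $([Y,Y], d_\infty)$. Write $H = \{1_Y, g, g^2, \ldots\}$, so the goal is to show $g^{-1} \in \Cl(H)$. (Recall that an automorphism of a compact metric space is precisely a surjective isometry, so $g^{-1}$ is again a self-isometry of $Y$, and in particular an element of $[Y,Y]$.)

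First I would record two preliminary facts. (a) For any self-isometry $h$ of $Y$, the map $\phi \mapsto h \of \phi$ is an isometry of $[Y,Y]$, since $d(h\phi(y), h\psi(y)) = d(\phi(y),\psi(y))$ for all $y \in Y$; this applies to $h = g^k$ for every $k \geq 0$ and to $h = g^{-1}$. Consequently $d_\infty(g^a, g^b) = d_\infty(1_Y, g^{b-a})$ whenever $0 \leq a \leq b$. (b) Since $[Y,Y]$ is compact (Arzel\`a--Ascoli: the maps in $[Y,Y]$ form a closed, equicontinuous, pointwise-bounded family), the sequence $(g^n)_{n \in \N}$ has a convergent, hence Cauchy, subsequence $(g^{n_i})$.

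The main step is then to extract from this Cauchy property powers of $g$ of arbitrarily large exponent that are arbitrarily close to $1_Y$. Given $\epsln > 0$, I would choose $N$ with $d_\infty(g^{n_i}, g^{n_j}) < \epsln$ for all $i,j \geq N$; then fact (a) gives $d_\infty(1_Y, g^{n_m - n_N}) < \epsln$ for every $m > N$, and since $n_m \to \infty$ the exponents $n_m - n_N$ are unbounded. Applying this with $\epsln = 1/k$ produces integers $p_k \geq 1$ with $d_\infty(1_Y, g^{p_k}) < 1/k$. Post-composing with the isometry $g^{-1}$ (fact (a) again) gives $d_\infty(g^{p_k - 1}, g^{-1}) = d_\infty(g^{p_k}, 1_Y) < 1/k$; since each exponent $p_k - 1$ is $\geq 0$, the maps $g^{p_k - 1}$ lie in $H$ and converge to $g^{-1}$ in $[Y,Y]$, whence $g^{-1} \in \Cl(H)$.

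I expect this to be essentially routine: the one genuine idea is to pass to a convergent subsequence, and the only point needing care is guaranteeing that the exponents produced in the main step are arbitrarily large, so that their predecessors are still nonnegative powers of $g$. A more structural alternative would be to observe that $\Cl(H)$ is a compact topological submonoid of $[Y,Y]$ in which $g$ is two-sided cancellable, and invoke the classical fact that a compact cancellative topological monoid is a group; I would prefer the direct argument above since it avoids that machinery and uses only what has already been set up.
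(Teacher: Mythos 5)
Your argument is correct, but it takes a genuinely different route from the paper. You run a direct recurrence argument: compactness of $[Y,Y]$ gives a Cauchy subsequence of $(g^n)$, the invariance of $d_\infty$ under composition with isometries converts this into powers $g^{p_k}$ with $p_k$ arbitrarily large and $d_\infty(1_Y, g^{p_k}) < 1/k$, and composing with $g^{-1}$ shifts these down to nonnegative powers $g^{p_k-1}$ converging to $g^{-1}$. The paper instead works one level up: it sets $\metspn{g} = \Cl\{1_Y, g, g^2, \ldots\}$, notes this is compact because $[Y,Y]$ is, observes that $g \of -$ restricts to a self-isometry of $\metspn{g}$, and then invokes the lemma already proved in that section (a self-isometry of a compact metric space is surjective, i.e.\ the verification of axiom~\axI\ for $\CptMet$) to get some $g' \in \metspn{g}$ with $g \of g' = 1_Y$, whence $g' = g^{-1}$. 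Amusingly, the ``structural alternative'' you set aside is essentially what the paper does, except that it needs only the surjectivity lemma rather than the full compact-cancellative-monoid theorem, and it mirrors the linear analogue (the Cayley--Hamilton argument of Lemma~\ref{lemma:ch}). Your version buys self-containedness --- it uses nothing beyond Arzel\`a--Ascoli and the isometry-invariance of $d_\infty$, and your care about keeping the exponents $p_k - 1 \geq 0$ is exactly the right point to watch --- at the cost of a little more $\epsln$-bookkeeping; the paper's version is shorter and reuses machinery already in place.
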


\begin{proof}
Write $\metspn{g} = \Cl\{1_Y, g, g^2, \ldots\}$. Since $[Y, Y]$ is compact,
so is $\metspn{g}$. The automorphism $g$ of $Y$ induces an automorphism
$g \of -$ of $[Y, Y]$, which restricts to an endomorphism of
$\metspn{g}$. Then $g \of -$ is a self-isometry of the compact metric space
$\metspn{g}$, so $g \of -$ is an automorphism of $\metspn{g}$. Since $1_Y \in
\metspn{g}$, there is some $g' \in \metspn{g}$ such that $g \of g' =
1_Y$. But $g$ is invertible, so $g^{-1} = g' \in \metspn{g}$.
\end{proof}

\begin{propn}
\label{propn:met-inv-cl}
$\au{f}^{-1} \in \Cl\{1_{\ei{f}}, \au{f}, \au{f}^2, \ldots\}$, where $\Cl$
is the closure operator on $[\ei{f}, \ei{f}]$.
\qed
\end{propn}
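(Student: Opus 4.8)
The plan is to deduce this immediately from Lemma~\ref{lemma:met-ch} by checking that its hypotheses apply with $Y = \ei{f}$ and $g = \au{f}$. First I would observe that $\ei{f}$ is a compact metric space: by Theorem~\ref{thm:main}\bref{part:main-lim} (specialized to $\CptMet$ earlier in this section), $\ei{f} = \bigcap_{n \in \N} \im(f^n)$ is an intersection of closed subspaces of the compact space $X$, hence closed, hence compact. Thus $[\ei{f}, \ei{f}]$ is a well-defined object of $\CptMet$ and the closure operator $\Cl$ on it makes sense.

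Next I would check that $\au{f}$ is an automorphism of $\ei{f}$ in the required sense. The lemma stated right after Example~\ref{eg:ei-idem} shows that $\ento{\ei{f}}{\au{f}}$ is invertible in any category, so in particular $\au{f}$ is an isomorphism in $\CptMet$; since both $\au{f}$ and $\au{f}^{-1}$ are then distance-decreasing, $\au{f}$ is in fact a surjective self-isometry of $\ei{f}$, i.e.\ an automorphism of the compact metric space $\ei{f}$. Applying Lemma~\ref{lemma:met-ch} with $Y = \ei{f}$ and $g = \au{f}$ gives $\au{f}^{-1} \in \Cl\{1_{\ei{f}}, \au{f}, \au{f}^2, \ldots\}$, which is exactly the assertion. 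There is no real obstacle here—the content is entirely in Lemma~\ref{lemma:met-ch}, and this proposition is simply its specialization to the eventual image—so the "proof" is a one-line invocation once the two hypotheses above are recorded, which is why the statement carries a $\square$ rather than a written-out argument.
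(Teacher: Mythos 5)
Your proposal is correct and is exactly the argument the paper intends: the proposition carries a \qed\ precisely because it is the specialization of Lemma~\ref{lemma:met-ch} to $Y = \ei{f}$ and $g = \au{f}$, and your two verifications (that $\ei{f} = \bigcap_n \im(f^n)$ is a compact metric space, and that $\au{f}$, being invertible in $\CptMet$ with distance-decreasing inverse, is a surjective self-isometry) are the right hypotheses to record. Nothing is missing.
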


In the next two results, we use Proposition~\ref{propn:met-inv-cl} to give
a further characterization of $f^\infty$. The first is a variant of
Lemma~1.3 of Borges~\cite{BorgHRH}.

\begin{lemma}
\label{lemma:met-idem-cl}
The idempotent $f^\infty$ on $X$ belongs to $\Cl\{f, f^2, \ldots\}$.
\end{lemma}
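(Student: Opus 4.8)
The plan is to read off $f^\infty$ as an explicit uniform limit using Proposition~\ref{propn:met-trace-ufm}, and then to feed in Proposition~\ref{propn:met-inv-cl} to rewrite each map occurring in that limit as a limit of genuine powers of $f$.

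First I would fix a convergent subsequence $(f^{n_i})$ of $(f^n)$ in $[X, X]$ --- one exists since $[X, X]$ is compact --- chosen so that $n_i \to \infty$ (pass to a further subsequence with strictly increasing indices if necessary), and write its limit as $h$, with factorization $h = \iota_f \of h'$, $h' \from X \to \ei{f}$, as in Proposition~\ref{propn:met-trace-ufm}; that proposition then gives $f^\infty = \lim_{i \to \infty} \iota_f \of \au{f}^{-n_i} \of h'$ in $[X, X]$. The one algebraic identity I need is that $f \of \iota_f = \iota_f \of \au{f}$ (the square defining $\au{f}$ in Example~\ref{eg:ei-coalg}), so that $f^m \of \iota_f = \iota_f \of \au{f}^m$ for all $m \geq 0$ by induction, and hence
\[
\iota_f \of \au{f}^m \of h' = f^m \of \iota_f \of h' = f^m \of h
= f^m \of \lim_k f^{n_k} = \lim_k f^{m + n_k}
\]
for all $m \geq 0$, using continuity of composition (map~\eqref{eq:comp-met}). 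Since $m + n_k \geq 1$, this shows $\iota_f \of \au{f}^m \of h' \in \Cl\{f, f^2, \ldots\} =: C$ for every $m \geq 0$.

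Next I would note that $\Cl\{1_{\ei{f}}, \au{f}, \au{f}^2, \ldots\}$ is a submonoid of $[\ei{f}, \ei{f}]$: composition on $[\ei{f}, \ei{f}]$ is jointly continuous by~\eqref{eq:comp-met}, and the closure of a submonoid under a continuous multiplication is again a submonoid. Hence Proposition~\ref{propn:met-inv-cl} gives $\au{f}^{-n_i} = (\au{f}^{-1})^{n_i} \in \Cl\{1_{\ei{f}}, \au{f}, \au{f}^2, \ldots\}$, i.e.\ $\au{f}^{-n_i}$ is a limit of maps of the form $\au{f}^m$ with $m \geq 0$. Composing with $\iota_f$ on the left and $h'$ on the right and using continuity of composition, $\iota_f \of \au{f}^{-n_i} \of h'$ is then a limit of elements $\iota_f \of \au{f}^m \of h' \in C$, so it lies in $C$ since $C$ is closed; and finally $f^\infty = \lim_i \iota_f \of \au{f}^{-n_i} \of h' \in C$ for the same reason.

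The argument is essentially an iterated interchange of limits, so the only thing to be careful about is that every composition step is continuous --- which is exactly the content of~\eqref{eq:comp-met}, recorded in the paper for this purpose --- and that $[X,X]$ and $[\ei{f},\ei{f}]$ are compact metric, so that closures coincide with sets of sequential limits. A small point worth flagging is the choice $n_i \to \infty$: it guarantees $m + n_k \geq 1$ above, so the elements $f^m \of h$ genuinely belong to $\Cl\{f, f^2, \ldots\}$ and not merely to $\Cl\{1_X, f, f^2, \ldots\}$. I expect this bookkeeping, rather than any real difficulty, to be the main thing to get right.
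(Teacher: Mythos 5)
Your proof is correct and follows essentially the same route as the paper's: both start from the uniform-limit formula $f^\infty = \lim_i \iota_f \of \au{f}^{-n_i} \of h'$ of Proposition~\ref{propn:met-trace-ufm}, use Proposition~\ref{propn:met-inv-cl} together with continuity of composition to approximate $\au{f}^{-n_i}$ by nonnegative powers of $\au{f}$, and then convert $\iota_f \of \au{f}^{\,m} \of h'$ into limits of genuine powers $f^{m+n_k}$ via $f \of \iota_f = \iota_f \of \au{f}$. The only differences are cosmetic (order of steps, and the unnecessary care about $n_i \to \infty$, which is automatic since $n_i \geq 1$).
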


\begin{proof}
Since $[X, X]$ is compact, the sequence $(f^n)_{n \geq 1}$ has a convergent
subsequence $(f^{n_i})_{i \geq 0}$. Write $h$ for its limit. By
Proposition~\ref{propn:met-trace-ufm}, $\im(h) \sub \ei{f}$ and, writing $h
= \bigl( X \toby{h'} \ei{f} \embby{\iota_f} X\bigr)$, we have
\begin{align}
\label{eq:idem-met-uc}
f^\infty = \lim_{i \to \infty} \iota_f \of \au{f}^{-n_i} \of h'
\end{align}
in $[X, X]$. 

We now repeatedly use the continuity of composition, shown
in~\eqref{eq:comp-met}. By Proposition~\ref{propn:met-inv-cl}, $\au{f}^{-1}
\in \Cl\{\au{f}^{\,k} \such k \geq 0\}$. Now $\{\au{f}^{\,k} \such k \geq
0\}$ is closed under composition, so its closure is too, giving 
$\au{f}^{-n_i} \in \Cl\{\au{f}^{\,k} \such k \geq 0\}$
for each $i$. It follows that for each $i$,
%
\[
\iota_f \of \au{f}^{-n_i} \of h'        
\in 
\Cl\{ \iota_f \of \au{f}^{\,k} \of h' \such k \geq 0 \}     
=
\Cl\{ f^k \of h \such k \geq 0 \}.
\]
%
But $h \in \Cl \{ f^n \such n \geq 1\}$ by definition of $h$, so for each
$k \geq 0$,
\[
f^k \of h 
\in 
\Cl\{f^{k + n} \such n \geq 1\} 
\sub 
\Cl\{f^n \such n \geq 1\}.
\]
Hence for each $i$,
\[
\iota_f \of \au{f}^{-n_i} \of h' \in \Cl\{f^n \such n \geq 1\},
\]
and the result follows from equation~\eqref{eq:idem-met-uc}.
\end{proof}

\begin{remark}
Lemma~\ref{lemma:met-idem-cl} and the idempotence of $f^\infty$ imply that
$f^\infty \in \Cl\{f^r, f^{r + 1}, \ldots\}$ for all $r \geq 0$. Analogous
results hold for sets and vector spaces. 
\end{remark}

\begin{propn}
$f^\infty$ is the unique idempotent element of $\Cl\{f, f^2, \ldots\}$.
\end{propn}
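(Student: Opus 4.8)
Existence is already in hand: Lemma~\ref{lemma:met-idem-cl} shows $f^\infty \in \Cl\{f, f^2, \ldots\}$, and $f^\infty$ is idempotent by construction. So the plan is to prove uniqueness: given $e \in \Cl\{f, f^2, \ldots\}$ with $e^2 = e$, I would show $e = f^\infty$, by pushing the whole argument down onto the compact space $\ei{f}$, where the relevant ``semigroup of rotations'' $\Cl\{1_{\ei{f}}, \au{f}, \au{f}^2, \ldots\}$ contains no nontrivial idempotents.

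First I would write $e = \lim_i f^{n_i}$ (possible since $[X,X]$ is a compact, hence metrizable, space) and arrange $n_i \to \infty$: if the $n_i$ are unbounded, pass to a subsequence; if they are bounded, then $e = f^N$ for a fixed $N$, and idempotence forces $f^{kN} = f^N$ for all $k$, so $e = \lim_k f^{kN}$. With $n_i \to \infty$, a routine argument gives $\im(e) \sub \bigcap_m \im(f^m) = \ei{f}$: for fixed $m$ and large $i$, $f^{n_i}(x) \in \im(f^m)$, which is closed, so $e(x)$ lies in it. Since $f^\infty = \iota_f \pi_f$ and $\pi_f \iota_f = 1$, the map $f^\infty$ fixes $\ei{f}$ pointwise, so this containment yields $f^\infty \of e = e$; putting $e' = \pi_f \of e \from X \to \ei{f}$, we have $e = \iota_f \of e'$.

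Next I would exploit idempotence inside $\ei{f}$. From $e^2 = e$ and the fact that $\iota_f$ is monic (it splits $\pi_f$) one gets $e' \of \iota_f \of e' = e'$, hence $(e' \of \iota_f)^2 = e' \of \iota_f$, so $e' \of \iota_f$ is an idempotent self-map of $\ei{f}$. Using Lemma~\ref{lemma:pr} to rewrite $\pi_f \of f^n \of \iota_f = \au{f}^{-n}$, together with the continuity of composition from~\eqref{eq:comp-met}, I would identify $e' \of \iota_f = \lim_i \au{f}^{-n_i}$, which by Proposition~\ref{propn:met-inv-cl} lies in $\Cl\{1_{\ei{f}}, \au{f}, \au{f}^2, \ldots\}$. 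Every element of that closure is a self-isometry of the compact space $\ei{f}$ (a pointwise limit of the isometries $\au{f}^k$), and an idempotent self-isometry of a compact metric space is the identity, since self-isometries are surjective (the first lemma of Section~\ref{sec:met}) and a surjective idempotent is the identity. Hence $e' \of \iota_f = 1_{\ei{f}}$, i.e.\ $\lim_i \au{f}^{-n_i} = 1_{\ei{f}}$. Finally, continuity of composition gives $e' = \pi_f \of e = \lim_i \pi_f \of f^{n_i} = \lim_i \au{f}^{-n_i} \of \pi_f = \bigl(\lim_i \au{f}^{-n_i}\bigr) \of \pi_f = \pi_f$, so $e = \iota_f \of e' = \iota_f \of \pi_f = f^\infty$.

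The step I expect to be most delicate is the reduction to $n_i \to \infty$ and the resulting containment $\im(e) \sub \ei{f}$: this is exactly what confines the problem to $\ei{f}$ and its rotation semigroup, and without it one could not discard the possibility of a stray idempotent power $f^N$ with $\im(f^N)$ strictly larger than $\ei{f}$. Everything after that is bookkeeping with the identities of Lemma~\ref{lemma:pr}, the splitting $\pi_f \iota_f = 1$, and the continuity of composition.
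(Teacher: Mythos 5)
Your argument is essentially correct, and it takes a genuinely different route from the paper's. The paper proves uniqueness by showing that the two idempotents $e$ and $f^\infty$ commute (composition is continuous and commutative on $\Cl\{1, f, f^2, \ldots\}$) and have the same image ($\im(e) \sub \ei{f}$ via $e = e^n \in \Cl\{f^n, f^{n+1}, \ldots\}$, and $\ei{f} \sub \im(e)$ because the restriction of $e$ to $\ei{f}$ is a self-isometry of a compact space, hence surjective), and then concludes from the fact that commuting idempotents with equal images coincide. You instead extract a convergent sequence $f^{n_i} \to e$ with $n_i \to \infty$, deduce $\im(e) \sub \ei{f}$ directly, and pin down the restriction of $e$ to $\ei{f}$ (your $e' \of \iota_f$) as an \emph{idempotent} self-isometry of the compact space $\ei{f}$, hence the identity; the computation $\pi_f \of e = \pi_f$ then gives $e = \iota_f\pi_f = f^\infty$ without any commutation argument. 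Both proofs run on the same three ingredients (compactness of $[X,X]$, continuity of composition as in~\eqref{eq:comp-met}, and surjectivity of self-isometries of compact spaces); yours trades the paper's algebraic step (commuting idempotents with equal image) for a slightly stronger geometric one (idempotent isometry $=$ identity), which arguably makes the endgame more transparent.

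One concrete slip, though it is harmless: the intertwining identities you attribute to Lemma~\ref{lemma:pr} have the exponent reversed. Since $f \of \iota_f = \iota_f \of \au{f}$ and correspondingly $\pi_f \of f = \au{f} \of \pi_f$ (check this on the colimit description, or on an automorphism of order $\geq 3$), one has $\pi_f \of f^n \of \iota_f = \au{f}^{\,n}$ and $\pi_f \of f^{n_i} = \au{f}^{\,n_i} \of \pi_f$, not $\au{f}^{-n}$ and $\au{f}^{-n_i} \of \pi_f$. This does not damage the proof: with the corrected exponents you get $e' \of \iota_f = \lim_i \au{f}^{\,n_i}$, which is still a limit of isometries and is manifestly in $\Cl\{1_{\ei{f}}, \au{f}, \au{f}^2, \ldots\}$ (so Proposition~\ref{propn:met-inv-cl} is not needed at all), and the final chain becomes $e' = \lim_i \au{f}^{\,n_i} \of \pi_f = (e' \of \iota_f) \of \pi_f = \pi_f$, exactly as before. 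So the skeleton stands; just fix the orientation of $\au{f}$ throughout.
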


This is the metric analogue of Proposition~\ref{propn:sets-unique-idem} for
sets.

\begin{proof}
Let $e$ be an idempotent in $\Cl\{f, f^2, \ldots\}$. By
Lemma~\ref{lemma:met-idem-cl}, it suffices to prove that $e = f^\infty$.
We will show that the idempotents $e$ and $f^\infty$ commute and have the
same image. It will follow that $e = f^\infty$: for since $\im(e) \sub
\im(f^\infty)$ and $f^\infty$ is idempotent, $f^\infty e = e$, and
similarly $e f^\infty = f^\infty$, giving the result.

First, $e$ and $f^\infty$ commute. Indeed, the composition map $[X, X]
\times [X, X] \to [X, X]$ is continuous and restricts to a commutative
operation on $\{1, f, f^2, \ldots\}$, so it also restricts to a
commutative operation on $\Cl\{1, f, f^2, \ldots\}$, which contains
both $e$ and $f^\infty$ (by Lemma~\ref{lemma:met-idem-cl}).

Next, $\im(e) \sub \im(f^\infty)$. For let $n \geq 1$. The endomorphism of
$[X, X]$ defined by $h \mapsto h^n$ restricts to a map $\{f, f^2, \ldots\}
\to \{f^n, f^{n + 1}, \ldots\}$, and is continuous, so it also restricts to
a map
\[
\Cl\{f, f^2, \ldots\} \to \Cl\{f^n, f^{n + 1}, \ldots\}.
\]
Hence $e = e^n \in \Cl\{f^n, f^{n + 1}, \ldots\}$, and it follows that
$\im(e) \sub \im(f^n)$. This holds for all $n$, so $\im(e) \sub \bigcap
\im(f^n) = \ei{f} = \im(f^\infty)$.

Finally, $\im(f^\infty) \sub \im(e)$. Indeed, $f^n
(\ei{f}) \sub \ei{f}$ for each $n$, so $e$ restricts to an endomorphism
$\hat{e}$ of $\ei{f}$. But $f^n|_{\ei{f}}$ is an isometry for each $n$, so
$\hat{e}$ is also an isometry. Hence $\hat{e}$ is a self-isometry
of the compact space $\ei{f}$, and therefore surjective. It follows that
$\im(e) \supseteq \ei{f} = \im(f^\infty)$. 
\end{proof}

Thus, $\ei{f}$ can be characterized as the image or fixed set of the
unique idempotent in $\Cl\{f, f^2, \ldots\}$.  

Theorem~\ref{thm:coalg}
provides a further characterization: $\ei{f}$ is the largest closed
subspace $V$ of $X$ such that $V \sub fV$.

There is yet another characterization. A point $x \in X$ is 
\demph{recurrent} for $f$ if $x \in \Cl\{f(x), f^2(x), \ldots\}$
(\cite{HNV}, Section~h-6.6).

\begin{propn}
The set of recurrent points for $f$ is $\ei{f}$. In particular, the set of
recurrent points is closed.
\end{propn}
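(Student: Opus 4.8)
The plan is to prove that the set $A$ of recurrent points of $f$ equals $\ei{f}$ by establishing the two inclusions separately, exactly as in the set and vector-space cases, but now invoking the metric tools of this section. Once $A = \ei{f}$ is known, the final sentence is immediate, since $\ei{f} = \bigcap_{n \in \N}\im(f^n)$ is an intersection of closed subspaces of $X$.

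For the inclusion $A \sub \ei{f}$, I would fix $x \in A$, so $f^{n_i}(x) \to x$ for some exponents $n_i \geq 1$, and show that $x \in \im(f^N)$ for every $N \geq 1$; this gives $x \in \bigcap_N \im(f^N) = \ei{f}$ by Theorem~\ref{thm:main}\bref{part:main-lim}. The key sub-claim is that $x \in \Cl\{f^n(x) : n \geq N\}$ for every $N$, after which one only needs that this set lies in the closed set $\im(f^N)$. A small dichotomy enters in the sub-claim: if the exponents $(n_i)$ are unbounded, a subsequence tends to infinity and the claim is immediate; if they are bounded, then by pigeonhole some $m \geq 1$ has $f^m(x) = x$, and then $x = f^{km}(x) \in \{f^n(x): n \geq N\}$ for any $k$ with $km \geq N$.

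For $\ei{f} \sub A$, I would use that on $\ei{f}$ the map $f$ restricts to the automorphism $\au{f}$, i.e.\ $f \iota_f = \iota_f \au{f}$ with $\iota_f$ the inclusion, so $f^n(x) = \au{f}^n(x)$ for $x \in \ei{f}$. By Proposition~\ref{propn:met-inv-cl}, $\au{f}^{-1} \in \Cl\{1_{\ei{f}}, \au{f}, \au{f}^2, \ldots\}$ in $[\ei{f}, \ei{f}]$; composing on the left with $\au{f}$ and using continuity of composition (from~\eqref{eq:comp-met}) gives $1_{\ei{f}} = \au{f} \of \au{f}^{-1} \in \Cl\{\au{f}, \au{f}^2, \ldots\}$; and applying the continuous (indeed distance-decreasing) evaluation map $g \mapsto g(x)$ on $[\ei{f},\ei{f}]$ gives $x = 1_{\ei{f}}(x) \in \Cl\{\au{f}(x), \au{f}^2(x),\ldots\} = \Cl\{f(x), f^2(x), \ldots\}$, the closures agreeing whether taken in $\ei{f}$ or in $X$ since $\ei{f}$ is closed in $X$. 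Thus $x$ is recurrent. (Alternatively, this inclusion is the classical fact that every point of a compact metric space is recurrent under an isometry, applied to $\au{f}$ on $\ei{f}$.)

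The main obstacle is bookkeeping rather than depth: in the first inclusion one must not overlook the periodic-point branch of the dichotomy, where recurrence of $x$ is witnessed by an actual periodicity rather than by a limit of distinct iterates; and in the second inclusion one must track that the crucial membership is of $1_{\ei{f}}$ in the closure of the \emph{positive} powers $\{\au{f}, \au{f}^2, \ldots\}$, which is precisely what lets the evaluation step exhibit $x$ as a limit of the points $f^n(x)$ with $n \geq 1$.
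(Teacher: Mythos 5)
Your proof is correct, but both halves run differently from the paper's, so a brief comparison is worthwhile. For the inclusion of recurrent points in $\ei{f}$, the paper argues by induction on $n$: if $x \in \im(f^n)$, then $f(x), f^2(x), \ldots$ all lie in the closed set $\im(f^{n+1})$, hence so does the recurrent point $x$. This sidesteps your dichotomy (unbounded exponents versus genuine periodicity), which you need only because you prove the stronger statement $x \in \Cl\{f^n(x) \such n \geq N\}$ for every $N$; your version is sound, just slightly longer, and you correctly handle the periodic branch that is easy to overlook. For $\ei{f}$ consisting of recurrent points, the paper simply quotes Lemma~\ref{lemma:met-idem-cl} ($f^\infty \in \Cl\{f, f^2, \ldots\}$) and evaluates at $x \in \ei{f}$, where $f^\infty(x) = x$. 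You instead work inside $[\ei{f}, \ei{f}]$: from Proposition~\ref{propn:met-inv-cl} and continuity of composition you get $1_{\ei{f}} \in \Cl\{\au{f}, \au{f}^2, \ldots\}$, then evaluate at $x$ and use $f^n \of \iota_f = \iota_f \of \au{f}^{\,n}$ together with closedness of $\ei{f}$ in $X$ --- in effect the classical fact that every point of a compact metric space is recurrent under an isometry. Your route is lighter in that it never needs the uniform-convergence Lemma~\ref{lemma:met-idem-cl} (whose proof goes through Proposition~\ref{propn:met-trace-ufm}), only the pointwise Proposition~\ref{propn:met-inv-cl}; the paper's route is shorter on the page because that lemma is already established there and is reused for the uniqueness-of-idempotent result. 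Your justification of the final sentence, via closedness of each $\im(f^N)$ and hence of $\bigcap_{n \in \N} \im(f^n)$, matches the intent of the paper.
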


\begin{proof}
Let $x$ be a recurrent point. We prove by induction that $x \in
\im(f^n)$ for all $n \in \N$, which will imply that $x \in \ei{f}$. The
base case is trivial. For $n \geq 0$, assume inductively that $x \in
\im(f^n)$. Then $f(x), f^2(x), \ldots$ all belong to the closed set
$\im(f^{n + 1})$, which therefore also contains $x$, completing the
induction. The converse follows from Lemma~\ref{lemma:met-idem-cl}. 
\end{proof}

Finally, note that the endomorphism $f \mapsto f^\infty$ of $[X, X]$ is
typically discontinuous. For example, let $X = [0, 1]$, and for $0 \leq t
\leq 1$, define $f_t \from X \to X$ by $f_t(x) = tx$. Then $f_t^\infty$ has
constant value $0$ whenever $t < 1$, but $f_1^\infty$ is the
identity. Thus, as $t \to 1^-$, we have $f_t \to f_1$ but $f_t^\infty
\not\to f_1^\infty$. Even in $\CptMet$, long-term dynamics are sensitive to
small changes in parameters.

\section{Further examples}
\label{sec:other}

We end with four further examples of categories with eventual image
duality: functor categories where the codomain has eventual image duality,
categories of finite models for a finitary algebraic theory, the category
of finite partially ordered sets, and Cauchy-complete categories with
finite hom-sets.

\begin{propn}
Let $\scat{A}$ be a small category and $\C$ a category with eventual
image duality. Then the functor category $\C^{\scat{A}}$ has eventual
image duality, and eventual images in it are computed pointwise.
\end{propn}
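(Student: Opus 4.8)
The plan is to reduce everything to the standard fact that limits and colimits in a functor category $\C^{\scat{A}}$ are computed objectwise. Fix an endomorphism $\ento{X}{F}$ in $\C^{\scat{A}}$; thus $X \from \scat{A} \to \C$ is a functor and $F \from X \to X$ a natural transformation, and for each object $a \in \scat{A}$ we obtain an endomorphism $\ento{X(a)}{F_a}$ in $\C$, which has eventual image duality by hypothesis.

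First I would form the objectwise limit of the diagram $\cdots \toby{F} X \toby{F} X \toby{F} \cdots$ in $\C^{\scat{A}}$: there is a functor $L \from \scat{A} \to \C$ with $L(a) = \lim\bigl(\cdots \toby{F_a} X(a) \toby{F_a} \cdots\bigr)$, whose value on a morphism $\alpha \from a \to b$ of $\scat{A}$ is the unique map induced by the limit property, and with limit projections $\pr_n \from L \to X$ whose component at $a$ is the $n$th limit projection of the $F_a$-diagram. Dually there is a functor $M$ with $M(a) = \colim\bigl(\cdots \toby{F_a} X(a) \toby{F_a} \cdots\bigr)$ and coprojections $\copr_n \from X \to M$ computed objectwise. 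The only work here is the routine naturality bookkeeping showing that these objectwise data assemble into functors and into universal cones in $\C^{\scat{A}}$ --- precisely the usual proof that $\C^{\scat{A}}$ inherits whatever limits and colimits $\C$ has.

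It then remains to identify the canonical map $L \to M$ of Definition~\ref{defn:ei}. Since the cones $(\pr_n)$ and $(\copr_n)$ are objectwise, the composite $L \toby{\pr_0} X \toby{\copr_0} M$ has component at each $a \in \scat{A}$ equal to the canonical map $L(a) \to M(a)$ attached to $\ento{X(a)}{F_a}$. Because $\C$ has eventual image duality, every such component is an isomorphism in $\C$, and a natural transformation all of whose components are isomorphisms is an isomorphism in $\C^{\scat{A}}$. Hence the canonical map $L \to M$ is invertible, so $\ento{X}{F}$ has eventual image duality, with $\ei{F}$ the functor $a \mapsto \ei{F_a}$ and with $\au{F}$, $\iota_F$, $\pi_F$ and $F^\infty$ all computed objectwise. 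I do not anticipate a genuine obstacle: every ingredient of the eventual-image construction --- the two sequential diagrams, their limit and colimit, the projection and coprojection cones, and the comparison map --- is built purely from limits and colimits, and those are objectwise in $\C^{\scat{A}}$. The only mildly delicate point is the functoriality of the objectwise limit, namely checking that $L(\alpha)$ is compatible with the projections for each $\alpha$ in $\scat{A}$; but this is exactly the standard argument and requires no new idea.
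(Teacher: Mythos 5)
Your proposal is correct and is essentially the paper's own argument: the paper simply cites the fact that limits and colimits in functor categories are computed pointwise, and your write-up spells out exactly that reduction, including the observation that the canonical comparison map $L \to M$ is objectwise the canonical map for each $\ento{X(a)}{F_a}$ and hence an isomorphism.
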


\begin{proof}
This follows from the fact that limits and colimits in functor categories
are computed pointwise (Kelly~\cite{KellBCE}, Section~3.3).
\end{proof}

\begin{example}
Let $G$ be a group, let $X$ be a finite-dimensional representation of $G$,
and let $f$ be a $G$-equivariant endomorphism of $X$. Then the eventual
image of $f$, as an endomorphism in the category of representations of $G$,
is the eventual image in $\FDVect$ equipped with the natural $G$-action.
\end{example}

If a finite set $X$ has the structure of a group, ring, etc., and if an
endomorphism $f$ of $X$ preserves that structure, then $\ei{f}$ is
naturally a group, ring, etc., and the maps $\iota_f$, $\pi_f$ and
$f^\infty$ are homomorphisms. In general:

\begin{propn}
Let $T$ be a finitary algebraic theory. Write $\C$ for the category of
$T$-algebras with finite underlying set. Then $\C$ has eventual image
duality, and eventual images in it are computed as in $\FinSet$. 
\end{propn}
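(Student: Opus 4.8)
The plan is to reduce everything to the case of $\FinSet$, which was settled in Section~\ref{sec:set}, by means of the forgetful functor. Let $\cat{A}$ denote the category of all $T$-algebras. Since $T$ is an algebraic theory, $\cat{A}$ is complete and cocomplete, the forgetful functor $V\from \cat{A} \to \Set$ creates all limits, and — crucially, because $T$ is \emph{finitary} — $V$ also creates filtered colimits. The category $\C$ of finite $T$-algebras is a full subcategory of $\cat{A}$, so the task is to check that, for an endomorphism $\ento{X}{f}$ in $\C$, both the limit and the colimit of the diagram~\eqref{eq:dbl-seq} in $\cat{A}$ land in $\C$ and that the canonical map between them is invertible there.

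First I would form the diagram~\eqref{eq:dbl-seq} in $\cat{A}$ and take its limit $L$. Then $VL$ is the limit in $\Set$ of the underlying diagram $\cdots \toby{f} X \toby{f} \cdots$ of finite sets, which by the stabilization argument of Section~\ref{sec:set} is a finite set (it is the eventual image $\im(f^{\mg{X}})$ of $Vf$). Hence $L$ is a finite $T$-algebra, i.e.\ an object of $\C$, and since the universal property of a limit is tested against objects — in particular against the objects of the full subcategory $\C$ — $L$ is also the limit of~\eqref{eq:dbl-seq} in $\C$. For the colimit, the key observation is that the indexing poset $(\Z, \leq)$ is directed, hence filtered, so that $V$ creates the colimit $M$ of~\eqref{eq:dbl-seq}; again $VM$ is the corresponding colimit in $\Set$, which is finite by Section~\ref{sec:set}, so $M$ lies in $\C$ and is a colimit there.

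It remains to show that the canonical map $L \to M$ is an isomorphism in $\C$. Applying $V$ to it gives the canonical map from the limit to the colimit of $\cdots \toby{Vf} VX \toby{Vf} \cdots$ in $\FinSet$, which is a bijection because $\FinSet$ has eventual image duality (Section~\ref{sec:set}). A $T$-algebra homomorphism whose underlying function is a bijection has a set-theoretic inverse that is automatically a homomorphism, and a homomorphism between finite $T$-algebras, hence a morphism of $\C$; so $L \to M$ is an isomorphism in $\C$. Therefore $f$ has eventual image duality in $\C$, and running $V$ through the whole construction shows that $\ei{f}$, together with the structure maps $\iota_f$, $\pi_f$, $f^\infty$ and $\au{f}$, is computed exactly as in $\FinSet$.

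The one point demanding care — and the place where finitariness of $T$ enters — is the colimit: colimits of $T$-algebras are \emph{not} in general created by the forgetful functor to $\Set$, so one must genuinely exploit that the diagram~\eqref{eq:dbl-seq} is filtered. Granted that observation, everything else is a direct transport of the results already established for finite sets.
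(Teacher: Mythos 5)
Your proof is correct, but it takes a different route from the paper's. The paper does not verify Definition~\ref{defn:ei} directly: it equips $\C$ with the injection/surjection factorization system, checks that it is of finite type (axioms \axI\ and \axIs\ because an injective or surjective endomorphism of a finite algebra is bijective, hence invertible; the remaining axioms because the forgetful functor creates limits and filtered colimits), and then invokes Theorem~\ref{thm:main}. You instead bypass the factorization-system machinery: you form the limit and colimit of the diagram~\eqref{eq:dbl-seq} in the category of all $T$-algebras, use creation of limits and of filtered colimits (the latter being where finitariness enters, exactly as in the paper) to see that their underlying sets are the corresponding finite (co)limits in $\Set$, conclude that both land in the full subcategory $\C$ and are (co)limits there, and finish by noting that the canonical comparison is a bijective homomorphism, hence an isomorphism. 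Both arguments hinge on the same two facts about the forgetful functor, and your reduction to the $\FinSet$ case settled in Section~\ref{sec:set} is sound (including the observation that fullness lets the (co)limit descend to $\C$, and that bijective homomorphisms are isomorphisms). What your approach buys is a self-contained, more elementary verification that also makes the clause ``computed as in $\FinSet$'' completely transparent, since the forgetful functor visibly preserves all the structure maps; what the paper's approach buys is more than the statement itself, namely that $\C$ carries a factorization system of finite type, so the additional descriptions of $\ei{f}$ from Theorem~\ref{thm:main}\bref{part:main-lim}--\bref{part:main-colim} (the nested-images limit and its dual) and the terminal-coalgebra characterization of Theorem~\ref{thm:coalg} apply to finite $T$-algebras as well, which your argument does not give directly.
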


\begin{proof}
The injections and surjections form a factorization system on
$\C$, which we show to be of finite type. Any injective or surjective
endomorphism of a finite $T$-algebra is bijective and so invertible,
giving axioms~\axI\ and~\axIs. The forgetful functor from the
category of $T$-algebras to $\Set$ creates limits and filtered
colimits, so the other axioms follow. The result then follows from
Theorem~\ref{thm:main}.
\end{proof}

The theory of partially ordered sets is not algebraic. Nevertheless:

\begin{propn}
The category of finite partially ordered sets has eventual image duality.
\end{propn}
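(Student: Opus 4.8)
The plan is to prove eventual image duality directly: construct the limit $L$ and colimit $M$ of $\cdots \toby{f} X \toby{f} X \toby{f} \cdots$ explicitly, check that they are finite (hence are the limit and colimit in the category of finite posets, not merely in the category $\Poset$ of all posets, which is complete and cocomplete), and show the canonical map $L \to M$ is an isomorphism of posets. The organising observation, exactly as for $\FinSet$ in Section~\ref{sec:set}, is that $X_\infty = \bigcap_{n \in \N} \im(f^n)$ is finite and $f$ restricts to a \emph{bijection} $g \from X_\infty \to X_\infty$; since $X_\infty$ is finite, $g$ has finite order, say $g^m = \id$, so that $g^{-1} = g^{m-1}$ is a nonnegative power of $g$.

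I would then identify $L$ and $M$ on the set $X_\infty$. Any family $(x_n)_{n \in \Z}$ with $f(x_n) = x_{n+1}$ lies entirely in $X_\infty$ and is determined by $x_0$ via $x_n = g^n(x_0)$, so the underlying set of $L$ is (in bijection with) $X_\infty$, and transporting the componentwise order gives $L = (X_\infty, \preceq)$ with $x \preceq x' \iff g^n(x) \leq_X g^n(x')$ for all $n \in \Z$ --- equivalently for $n = 0, \dots, m-1$ --- which is a partial order because the $n = 0$ clause is $\leq_X$ restricted to $X_\infty$. The limit projections are $\pr_n = \iota_f \of g^n$, where $\iota_f \from (X_\infty, \preceq) \incl X$ is the inclusion (order-preserving, but in general not an order-embedding), and $g$ is an automorphism of $L$; verifying that this cone is a limit cone is a routine diagram chase like those in Theorem~\ref{thm:main}. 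Dually, the colimit $M$ has underlying set $X/{\sim}$, where $x \sim x' \iff f^k(x) = f^k(x')$ for some $k$ --- a quotient of the finite set $X$, in bijection with $X_\infty$ as in Section~\ref{sec:set} --- carrying the order $[x] \leq_M [x'] \iff f^k(x) \leq_X f^k(x')$ for some $k \geq 0$, which is the finest order making all coprojections order-preserving.

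The canonical map $\kappa \from L \to M$ is $\copr_0 \of \pr_0$, which on underlying sets is the bijection $X_\infty \incl X \epic X/{\sim}$; being a morphism it is order-preserving, so it remains to show it reflects the order. Thus, for $w, w' \in X_\infty$, I must deduce from $[w] \leq_M [w']$ --- i.e.\ $g^k(w) \leq_X g^k(w')$ for some $k \geq 0$ --- that $g^n(w) \leq_X g^n(w')$ for \emph{all} $n \in \Z$. For $n \geq k$ this is immediate from $f$ being order-preserving; for $n < k$ one applies $g^{n-k}$, and the point is that $g^m = \id$ makes $g^{n-k}$ a nonnegative power of $g$, hence order-preserving. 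This finite-orderness of $f|_{X_\infty}$ is exactly where finiteness of $X$ enters, and is the only step that is not bookkeeping: it is what forces the ``intersection over $n$'' defining $\preceq$ to agree with the ``union over $n$'' defining $\leq_M$. It follows that $\kappa$ is an isomorphism, so $f$ has eventual image duality, with $\ei{f} = (X_\infty, \preceq)$ and $\au{f} = g$.
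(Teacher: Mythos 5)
Your proposal is correct, but it takes a genuinely different route from the paper's. The paper does not build the limit and colimit by hand: it equips the category of finite posets with a factorization system (it uses the injections and surjections), checks that it is of finite type, and invokes Theorem~\ref{thm:main}. The only non-trivial axioms there are \axI\ and \axIs, and they are verified by exactly your key trick: a monotone bijection $f$ of a finite poset satisfies $f^n = 1$ for some $n \geq 1$, so $f^{-1} = f^{n-1}$ is monotone; the remaining axioms hold because sequential chains of embeddings or surjections of finite posets stabilize and are computed as in $\Set$. Your direct argument re-proves in this special case what Theorem~\ref{thm:main} supplies in general (the identification of the limit and colimit cones and the bijectivity of the canonical map, which you import from Section~\ref{sec:set}), but it buys a completely explicit description of $\ei{f}$, $\iota_f$, $\pi_f$ and $\au{f}$, and it isolates cleanly the one substantive point --- that $g^{m}=\id$ makes every $g^{n}$, $n \in \Z$, order-preserving, which is what forces the ``intersection'' order on the limit to agree with the ``union'' order on the colimit. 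The paper's route is shorter and comes with the extra dividends of Theorems~\ref{thm:main} and~\ref{thm:coalg} (the $\bigcap_n \im(f^n)$ description and the terminal-coalgebra characterization), which your computation recovers only implicitly.

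One parenthetical claim of yours is wrong, though harmlessly so: $\iota_f$ \emph{is} an order-embedding. Your own final step proves it: since $g^{-1} = g^{m-1}$ is a nonnegative power of $g$, the relation $w \leq_X w'$ for $w, w' \in X_\infty$ already implies $g^n(w) \leq_X g^n(w')$ for all $n \in \Z$, so your order $\preceq$ coincides with the restriction of $\leq_X$ to $X_\infty$, and $\ei{f}$ is simply the sub-poset $\bigcap_{n} \im(f^n)$ with the induced order and the automorphism $g$.
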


\begin{proof}
In the category $\C$ of finite posets, the injections and surjections form
a factorization system. We show that it is of finite type. For axioms \axI\ and
\axIs, let $\ento{X}{f}$ be an injection or surjection in
$\C$. Then $f$ is a bijection. Since $X$ is finite, $f^n = 1_X$ for some $n
\geq 1$; then the set-theoretic inverse $f^{-1}$ is $f^{n - 1}$, which is
order-preserving, so $f$ is an order-isomorphism. The other axioms follow
from the fact that sequential limits and colimits in $\C$ are computed as
in $\Set$. Hence Theorem~\ref{thm:main} applies.
\end{proof}

The last two propositions also follow from our final result. 

\begin{thm}
Let $\C$ be a Cauchy-complete category in which every hom-set is
finite. Then $\C$ has eventual image duality. 
\end{thm}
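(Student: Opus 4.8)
The plan is to reduce, via elementary finite semigroup theory, to the case of a split idempotent, which is already handled by Example~\ref{eg:ei-idem}, and then transfer the conclusion back along a cofinal inclusion exactly as in the proof of Proposition~\ref{propn:timescale}. So fix an endomorphism $\ento{X}{f}$ in $\C$. By hypothesis $\C(X,X)$ is finite, hence a finite monoid under composition, so Lemma~\ref{lemma:semi-idem} applied to $f \in \C(X,X)$ produces an integer $N \geq 1$ with $f^N$ idempotent (explicitly, one may take $N = mk$ where $m, k \geq 1$ satisfy $f^m = f^{m + k}$).

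Since $\C$ is Cauchy-complete, this idempotent splits, say $f^N = \bigl( X \toby{p} I \toby{i} X \bigr)$ with $p \of i = 1_I$. By Example~\ref{eg:ei-idem}, $f^N$ has eventual image duality: the diagram $\cdots \toby{f^N} X \toby{f^N} X \toby{f^N} \cdots$ has limit $I$ with $0$th projection $i$, colimit $I$ with $0$th coprojection $p$, and its canonical map $I \to I$ is $p \of i = 1_I$. Now $(N\Z, \leq) \incl (\Z, \leq)$ is cofinal, and restricting the diagram $\cdots \toby{f} X \toby{f} X \toby{f} \cdots$ along it gives, up to the reindexing $(N\Z, \leq) \iso (\Z, \leq)$, exactly the diagram $\cdots \toby{f^N} X \toby{f^N} \cdots$. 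Restriction along a cofinal functor induces an equivalence between the categories of cones over the two diagrams, compatible with the cone leg at any index retained in the smaller diagram; hence the $f$-diagram acquires a limit, namely $I$, whose $0$th projection is again $i$ (as $0 \in N\Z$). Dually (the colimit analogue of cofinality, as invoked in Proposition~\ref{propn:timescale}), the $f$-diagram has colimit $I$ with $0$th coprojection $p$. Therefore the canonical map from its limit to its colimit is $p \of i = 1_I$, an isomorphism, so $f$ has eventual image duality. (Incidentally $\ei{f} = I$, in agreement with Proposition~\ref{propn:timescale}.)

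The only step requiring genuine care is this transfer along the cofinal inclusion: one must check not merely that the two limits coincide \emph{when both exist}, but that the $f$-diagram acquires a limit at all, and that the $0$th cone legs match, so that the two canonical maps are literally the same morphism. Both points follow from the fact that a cofinal functor induces an equivalence of cone categories that fixes the projection at each index lying in the smaller diagram, which is also what underlies the proof of Proposition~\ref{propn:timescale}; everything else is a direct assembly of Lemma~\ref{lemma:semi-idem}, Cauchy-completeness, and Example~\ref{eg:ei-idem}.
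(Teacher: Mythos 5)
Your proof is correct, and while it opens exactly as the paper does (Lemma~\ref{lemma:semi-idem} applied to the finite monoid $\C(X,X)$ to get $f^N$ idempotent, then Cauchy-completeness to split it as $X \toby{p} I \toby{i} X$), it finishes by a genuinely different verification. The paper proceeds by hand: it defines $\hat{f} = pfi$, checks the intertwining identities $\hat{f}p = pf$ and $i\hat{f} = fi$ and that $\hat{f}^N = 1_I$, and then verifies directly that $\bigl(I \toby{i\hat{f}^{\,n}} X\bigr)_{n \in \Z}$ is a limit cone on the $f$-diagram (dually for the colimit), whence the canonical map is $pi = 1_I$. You instead quote Example~\ref{eg:ei-idem} for the $f^N$-diagram and transfer its limit and colimit across the inclusion $(N\Z,\leq) \incl (\Z,\leq)$, which is indeed initial and final (its comma categories are nonempty chains). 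The point you rightly single out---that restriction along such a functor induces an isomorphism of (co)cone categories, so the $f$-diagram's (co)limit is \emph{created} from the restricted one, with the leg at any retained index, in particular $0$, unchanged---is exactly what makes the argument close, and it is the same fact implicitly powering Proposition~\ref{propn:timescale}; the paper even notes in a parenthetical that, granted duality, $\ei{f} = \ei{f^N} = I$ by that proposition and Example~\ref{eg:ei-idem}, so your proof in effect promotes that uniqueness remark to an existence proof. What your route buys is brevity and a reduction to two already-established facts; what the paper's direct route buys is self-containedness (no appeal to the creation direction of the cofinality theorem) together with explicit formulas for the data one wants afterwards, namely $\pr_n = i \of \hat{f}^{\,n}$, $\copr_n = \hat{f}^{-n} \of p$ and the automorphism $\au{f} = pfi$ of $\ei{f} = I$, which your argument produces only implicitly.
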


\begin{proof}
Let $\ento{X}{f}$ be an endomorphism in $\C$. By
Lemma~\ref{lemma:semi-idem} applied to the finite semigroup $\{f, f^2,
\ldots\}$, we can choose $N \geq 1$ such that $f^N$ is idempotent. Since
$\C$ is Cauchy-complete, $f^N$ has a splitting
\[
\xymatrix{I \ar@<0.5ex>[r]^i & X. \ar@<0.5ex>[l]^p}
\]
We will show that $I$ is an eventual image of $f$, with $i$ and $p$ as the
maps usually called $\iota_f $ and $\pi_f$. (\emph{If} $f$ has
eventual image duality then $\ei{f}$ must be $I$: for $\ei{f} = \ei{f^N} =
I$ by Proposition~\ref{propn:timescale} and Example~\ref{eg:ei-idem}.)

Put $\hat{f} = pfi \from I \to I$. Then the diagram
\begin{align}
\label{eq:cc-fact}
\begin{array}{c}
\xymatrix{
X \ar[r]^p \ar[d]_f     &
I \ar[r]^i \ar[d]^{\hat{f}}     &
X \ar[d]^f      \\
X \ar[r]_p      &
I \ar[r]_i      &
X
}
\end{array}
\end{align}
commutes, since 
\[
\hat{f}p = pfip = pf f^N = p f^N f = pipf = pf
\]
and dually for the right-hand square. So
\[
\hat{f}^N = \hat{f}^N pi = p f^N i = pipi = 1_X.
\]
Hence $\hat{f}$ is an automorphism of $I$. 

By~\eqref{eq:cc-fact}, there is a cone $\bigl( I \toby{\pr_n} X \bigr)_{n
\in \Z}$ on $\ \cdots \toby{f} X \toby{f} \cdots\ $ given by
\[
\pr_n = \Bigl( I \toby{\hat{f}^n} I \toby{i} X \Bigr).
\]
We show that this cone is a limit. Let $\bigl( A \toby{q_n} X \bigr)_{n \in
\Z}$ be any cone on the same diagram. We must prove that there is a unique
map $\bar{q} \from A \to I$ such that
\begin{align}
\label{eq:cc-int}
q_n = \bigl( A \toby{\bar{q}} I \toby{\pr_n} X \bigr)
\end{align}
for all $n \in \Z$. 

For uniqueness, equation~\eqref{eq:cc-int} with $n = 0$
states that $q_0 = i \bar{q}$, giving $\bar{q} = pq_0$.  For existence,
put $\bar{q} = pq_0$. Note that for all $n \in \Z$,
\[
q_n = f^N \of q_{n - N} = f^{2N} \of q_{n - N} = q_{n + N}.
\]
Now equation~\eqref{eq:cc-int} follows from the commutativity of the
diagram
\[
\xymatrix@R+1ex@C+2ex{
A \ar[r]^{q_0} \ar@/^1.5pc/[rr]^{\bar{q}} 
\ar[rd]_{q_n} \ar@/_2pc/[rrdd]_{q_n = q_{n + N}}&
X \ar[r]^p \ar[d]^{f^n} &
I \ar[d]^{\hat{f}^N} \ar@/^2pc/[dd]^{\pr_n}   \\
&
X \ar[r]^p \ar[rd]_{f^N}        &
I \ar[d]^i      \\
&
&
X,
}
\]
where we have used the cone property of $(q_n)$ and
diagram~\eqref{eq:cc-fact}. 

This proves that $\bigl(I \toby{\pr_n} X\bigr)_{n \in \N}$ is a limit
cone. Dually, $\bigl( X \toby{\copr_n} I \bigr)_{n \in \N}$ is a colimit
cone, where $\copr_n = \hat{f}^{-n} p$. The composite 
\[
I \toby{\pr_0} X \toby{\copr_0} I
\]
is $pi = 1_I$, which is an isomorphism. Hence $f$ has eventual image
duality. 
\end{proof}

\ucontents{section}{References}
\bibliography{mathrefs}

\end{document}